\newcommand{\Z}{\mathbb{Z}}
\newcommand{\Q}{\mathbb{Q}}
\newcommand{\Out}{\operatorname{Out}}
\newcommand{\Aut}{\operatorname{Aut}}
\newcommand{\st}{\ensuremath{\operatorname{st}}}
\newcommand{\lk}{\ensuremath{\operatorname{lk}}}
\newcommand{\SL}{\operatorname{SL}}
\newcommand{\GL}{\operatorname{GL}}
\newcommand{\PGL}{\operatorname{PGL}}
\newcommand{\Sout}{\operatorname{SOut}}
\newcommand{\Saut}{\operatorname{SAut}}
\newcommand{\IA}{\operatorname{IA}}
\newcommand{\mids}{\, | \, }
\newcommand{\m}{\ensuremath{^{-1}}}
\newcommand{\ad}{\operatorname{ad}}
\renewcommand{\phi}{\varphi}
\newcommand{\cal}[1]{\mathcal{#1}}
\newcommand{\grp}[1]{\langle #1 \rangle}
\newcommand{\im}{\operatorname{im}}
\newcommand{\Id}{\operatorname{Id}}
\newcommand{\Span}{\operatorname{Span}}
\newcommand{\z}{\mathbf{z}}
\newcommand{\x}{\mathbf{x}}
\newcommand{\y}{\mathbf{y}}
\newcommand{\w}{\mathbf{w}}
\renewcommand{\v}{\mathbf{v}}
\newtheorem{thm}{Theorem}[section]
\newtheorem{thmspecial}{Theorem}
\newtheorem{corspecial}[thmspecial]{Corollary}
\newtheorem{propspecial}[thmspecial]{Proposition}
\newtheorem{lem}[thm]{Lemma}
\theoremstyle{remark}
\newtheorem{rem}[thm]{Remark}
\theoremstyle{definition}
\newtheorem{defn}[thm]{Definition}
\newtheorem{conv}[thm]{Convention}
\theoremstyle{plain}
\newtheorem{prop}[thm]{Proposition}
\theoremstyle{definition}
\newtheorem{quess}[thmspecial]{Question}
\theoremstyle{definition}
\newcounter{excounter}
\newtheorem{ex}[excounter]{Example}
\newcounter{stepcounter}
\newtheorem{step}[stepcounter]{Step}
\begin{document}

\title[On virtual indicability and property (T) for $\Out(A_\Gamma)$]{On virtual indicability and property (T) for outer automorphism groups of RAAGs}

\author{Andrew Sale}

\maketitle

    \begin{abstract}
    We give a condition on the defining graph of a right-angled Artin group which implies its automorphism group is virtually indicable, that is, it has a finite-index subgroup  that admits a homomorphism onto $\Z$.
    
    We use this as part of a criterion that determines  precisely when the outer automorphism group of a right-angled Artin group defined on a graph with no separating intersection of links has property (T).
    As a consequence we also obtain a similar criterion for graphs in which each equivalence class under the domination relation of Servatius generates an abelian group.
    \end{abstract}

\section{Introduction}

Kazhdan's property~(T) is a rigidity property for groups with many wide-ranging applications, as discussed in the introduction to \cite{PropT}.
A group is said to have property~(T) if every unitary representation with almost invariant vectors has invariant vectors.
It is notoriously challenging to prove that a group has property~(T), however there are certain obstructions to it that can more readily be demonstrated.
One such obstruction is that of \emph{virtual indicability}, which occurs for a group when it has a finite index subgroup that admits a surjection onto $\Z$.

Recall that a right-angled Artin group (RAAG) is defined by a presentation usually determined by a simplicial graph $\Gamma$.
The vertex set of $\Gamma$ provides the generating set for this presentation, while the defining relators come from commutators between all pairs of adjacent vertices in $\Gamma$.
The RAAG associated to the graph $\Gamma$ is denoted $A_\Gamma$.

In the universe of finitely presented groups, the outer automorphism groups of RAAGs provide a bridge between the groups $\GL(n,\Z)$ and $\Out(F_n)$, the outer automorphism groups of the non-abelian free groups.
At each head of the bridge we understand the groups' behaviours with regards to property (T).
For $n\geq 3$ it is well-known that $\GL(n,\Z)$ has property (T), while $\GL(2,\Z)$ does not;
at the other end, for $n\geq 5$,  computer-aided proofs of Kaluba, Kielak, Nowak and Ozawa \cite{KNO_propT,KKN_propT} tell us $\Aut(F_n)$ (and hence $\Out(F_n)$ since property~(T) is inherited by quotients) have property (T), while $\Out(F_2)$ and $\Out(F_3)$ do not \cite{GrunewaldLubotzky:AutFree}.
A recent preprint of Nitsche has extended the proof of property (T) for $\Aut(F_n)$ to $n=4$ \cite{Nitsche-propT}.

Besides property (T), there are many characteristics that are shared by both $\GL(n,\Z)$ and $\Out(F_n)$, such as finite presentability, residual finiteness, finite virtual cohomological dimension, satisfying a Tits alternative, or being of type VF.
All these mentioned hold in fact for $\Out(A_\Gamma)$ for all RAAGs $A_\Gamma$ \cite{DayPeakReduction,CV09,CV11,HorbezTA,DayWade}.
Other properties hold at both ends of the bridge, but somewhere in between they may fail. For example, $\Out(F_n)$ and $\GL(n,\Z)$ are of course infinite groups (for $n\geq 2$), and both contain non-abelian free subgroups, but there are non-cyclic RAAGs $A_\Gamma$ for which $\Out(A_\Gamma)$ is finite (for example, take the defining graph $\Gamma$ to be a pentagon), or which are infinite but do not contain free subgroups \cite{Day_solvable}.
Another example is having a finite outer automorphism group of $\Out(A_\Gamma)$. Both $\Out(\Out(F_n))$ and $\Out(\GL(n,\Z))$ are finite \cite{BV-finiteoutout,HuaReiner}, but this is not always true for $\Out(\Out(A_\Gamma))$ \cite{BregmanFullarton}.

In this paper we address the following.
\begin{quess}\label{q:T}
	For which $\Gamma$ does $\Out(A_\Gamma)$ satisfy Kazhdan's property (T)?
\end{quess}

Beyond the above results for $\GL(n,\Z)$ and $\Out(F_n)$, there are a number of partial answers to this question.
For example, Aramayona and Mart\'inez-P\'erez give conditions (see Theorem~\ref{thms:AMP} below) that imply virtual indicability, and hence deny property~(T) \cite{Aramayona-MartinezPerez}.
We can also describe precisely when $\Out(A_\Gamma)$ is finite (and so has property~(T)), and when it is infinite virtually nilpotent (and so it does not have property~(T)) \cite{Day_solvable}.
See Section~\ref{sec:what is known} for more details on what is known and what is open.

The objective of this paper is to expand one of the conditions given by \cite{Aramayona-MartinezPerez} and use it to definitively answer Question~\ref{q:T} whenever $\Gamma$ contains no \emph{separating intersection of links} (SILs).
These occur when all paths from some vertex $z$ of $\Gamma$ to either of a non-adjacent pair $x,y$ must pass through a vertex that is adjacent to both $x$ and $y$, i.{}e.{} the path hits the intersection of the links of $x$ and $y$, $\lk(x)\cap\lk(y)$ (see Definition~\ref{d:SIL}).

Having a SIL is precisely the condition required for $\Out(A_\Gamma)$ to contain two non-commuting partial conjugations (automorphisms defined by conjugating certain vertices in $\Gamma$ by a given vertex, see Section~\ref{sec:prelim:gen}). 
One way to interpret the meaning of a SIL is that its absence pushes $\Out(A_\Gamma)$ to behave more like $\GL(n,\Z)$.
If $A_\Gamma = F_n$, for $n\geq 3$, then any three vertices of $\Gamma$ form a SIL, while if $A_\Gamma = \Z^n$, then $\Gamma$ has no SIL. For the latter, all partial conjugations are clearly trivial, while for the former they play a crucial role in the structure of $\Out(F_n)$.
This interpretation is strengthened with the following two observations. 
With no SIL, a finite index subgroup of $\Out(A_\Gamma)$ admits a quotient by a finite-rank free abelian subgroup that is a block-triangular matrix group with integer entries.
This quotient is from the standard representation of $\Out(A_\Gamma)$, obtained by acting on the abelianisation of $A_\Gamma$.
The second observation is that, again with no SIL, a finite index subgroup of $\Out(A_\Gamma)$ admits a quotient by a finitely generated nilpotent group that is a direct product of groups $\SL(n_i,\Z)$, for various integers $n_i$ \cite[Theorem 2]{GuirardelSale-vastness}.

We highlight also a second feature of $\Gamma$ that effects the automorphism group of the corresponding RAAG.
The \emph{domination relation} of Servatius \cite{Servatius} determines when a transvection $R_u^v$, defined for vertices $u,v$ of $\Gamma$ by sending $u$ to $uv$ and fixing all other vertices, is an automorphism of $A_\Gamma$.
We say $u$ is dominated by $v$, and write $u \leq v$, if any vertex adjacent to $u$ is also adjacent or equal to $v$ (i.e.{} the link of $u$ is contained in the star of $v$: $\lk(u)\subseteq \st(v)$).
We have that  $u\leq v$ if and only if $R_u^v\in \Aut(A_\Gamma)$.
The relation of domination is a preorder and therefore determines equivalence classes of vertices.

The following  is the aforementioned criteria of Aramayona and Mart\'inez-P\'erez to deny property~(T).

\begin{thmspecial}[Aramayona--Mart\'inez-P\'erez \cite{Aramayona-MartinezPerez}]\label{thms:AMP}
Consider the following properties of a graph $\Gamma$:
	\begin{enumerate}
	\renewcommand{\theenumi}{\normalfont (A\arabic{enumi})}
	\renewcommand\labelenumi{\theenumi}
	\item\label{A} if $u,v$ are distinct vertices of $\Gamma$ such that $u\leq v$ then there exists a third vertex $w$ such that $u\leq w\leq v$.
	\item\label{B} if $v$ is a vertex such that $\Gamma\setminus \st(v)$ has more than one connected component, then there exists a vertex $u$ such that $u\leq v$.
\end{enumerate}	
	Let $\Gamma$ be any simplicial graph. 
	If either property \ref{A} or \ref{B} fails, then $\Aut(A_\Gamma)$ has a finite index subgroup that maps onto $\Z$.
\end{thmspecial}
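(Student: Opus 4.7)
The plan is to use the Laurence--Servatius generating set for $\Aut(A_\Gamma)$---vertex inversions, graph automorphisms of $\Gamma$, transvections $R_u^v$ (whenever $u \leq v$), and partial conjugations $\pi_v^C$ for components $C$ of $\Gamma \setminus \st(v)$---together with the standard representation on the abelianization $A_\Gamma^{\mathrm{ab}} \cong \Z^{V(\Gamma)}$. The inversions and graph automorphisms generate a finite quotient of $\Aut(A_\Gamma)$, so passing to a finite-index subgroup $H$ we may restrict attention to the subgroup generated by the transvections and partial conjugations. The goal is then to build a surjection $H \twoheadrightarrow \Z$ in each of the two failing cases, by assigning an exponent to a single distinguished Laurence--Servatius generator.

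For the failure of \ref{A}, choose distinct $u, v$ with $u \leq v$ and no third vertex $w$ satisfying $u \leq w \leq v$. If $u$ and $v$ lie in a common equivalence class of the domination preorder, then by hypothesis this class equals $\{u, v\}$, and the restriction of the standard representation to the corresponding rank-$2$ block maps $\langle R_u^v, R_v^u\rangle$ onto a copy of $\SL(2, \Z)$; since $\SL(2, \Z)$ is virtually free, pulling back a surjection of a finite-index free subgroup onto $\Z$ produces the desired homomorphism. Otherwise $u < v$ strictly; define $\phi$ on $H$ by $\phi(R_u^v) = 1$ and $\phi = 0$ on every other transvection and partial conjugation, and verify well-definedness against the Laurence--Servatius relations. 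The relations that could conceivably force $\phi(R_u^v) = 0$ are identities expressing $R_u^v$ as a commutator or product involving transvections $R_u^w$ or $R_w^v$ with $u \leq w \leq v$, which are excluded by the failure of \ref{A}.

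For the failure of \ref{B}, choose $v$ such that $\Gamma \setminus \st(v)$ has components $C_1, \ldots, C_k$ with $k \geq 2$, but no vertex is dominated by $v$. Define $\phi$ on $H$ by $\phi(\pi_v^{C_1}) = 1$ and $\phi = 0$ on all other generators. The product relation $\pi_v^{C_1} \cdots \pi_v^{C_k} = \iota_v$ forces $\phi(\iota_v) = 1$; this is consistent, since $v$ is not central in $A_\Gamma$ (as $\Gamma \setminus \st(v) \neq \emptyset$), so the map $A_\Gamma \to \Z$ sending $v$ to $1$ and the remaining vertices to $0$ descends to $\Inn(A_\Gamma) \cong A_\Gamma / Z(A_\Gamma)$, matching the restriction of $\phi$ to the inner automorphism subgroup. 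The absence of any transvection with target $v$ is precisely what removes the conjugation identities that would otherwise equate $\pi_v^{C_1}$ with a product expressible via other generators and hence force its exponent to vanish.

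The main obstacle is the systematic well-definedness check against the full set of Laurence--Servatius relations, which requires organizing them into families (commutators of disjointly-supported generators, product relations for inner automorphisms, conjugation identities relating transvections and partial conjugations) and verifying each is respected by $\phi$. The two structural conditions on $\Gamma$ are calibrated precisely so that the problematic relations in each case are absent, but confirming this category by category is where the bulk of the technical work lies.
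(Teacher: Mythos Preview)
The paper does not give its own proof of this statement; it is quoted from \cite{Aramayona-MartinezPerez}, and the only discussion is the two-sentence sketch in the introduction: when \ref{A} fails one builds a homomorphism in which $R_u^v$ has infinite-order image, and when \ref{B} fails one exploits a partial conjugation with multiplier $v$. Your proposal is exactly this strategy, spelled out in more detail, so at the level of approach it agrees with what the paper describes.

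A few comments on the execution. In the \ref{A} case your dichotomy (equivalence class of size two versus strict domination $u<v$) is the right one, and the $\SL(2,\Z)$ argument for the former is clean. For the strict case, defining $\phi$ on generators and checking relations is the standard route, but to make this rigorous you will need a specific finite presentation of $\Saut^0(A_\Gamma)$ (Day's presentation, or the block-triangular description of the image of the standard representation as in Proposition~\ref{prop:block triangular and T}); the heuristic ``the only dangerous relations are $[R_u^w,R_w^v]=R_u^w$'' is correct in spirit but needs that backing.

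In the \ref{B} case your assignment $\phi(\pi^v_{C_1})=1$ is right, and your consistency check on $\Inn(A_\Gamma)$ via the abelianisation map $v\mapsto 1$ is a good observation. One point to sharpen: the phrase ``absence of any transvection with target $v$'' is ambiguous. What the minimality of $v$ buys you is the absence of transvections $R_u^v$ with $u\neq v$ (i.e.\ $v$ as multiplier); this is what rules out the SIL-type identity in Lemma~\ref{lem:conjugates of pc by transv}\ref{relation3}, which would otherwise entangle $\pi^v_{C_1}$ with transvections. Transvections $R_v^w$ with $v\leq w$ may well exist, and you must also check the identity \ref{relation2}, namely $(R_v^w)^{\delta}\pi^v_{C}(R_v^w)^{-\delta}=\pi^v_C(\pi^w_{C'})^{\delta}$; this one is harmless for your $\phi$ since $\phi(\pi^w_{C'})=0$, but it should be said explicitly. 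Similarly, commutators of partial conjugations with different multipliers need to be addressed (these are either trivial or free, so pose no obstruction, but this too requires justification).

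In short: your plan is correct and aligned with the cited argument; the gap is only that the well-definedness verification is asserted rather than carried out, and doing it properly requires either Day's presentation or the matrix-group description of the standard representation.
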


(In \cite{Aramayona-MartinezPerez}, condition~\ref{A} is referred to by (B2), while condition~\ref{B} is the hypotheses of Theorem~1.6.)

When condition~\ref{A} fails, there are vertices $u,v$ in $\Gamma$ such that there is no $w$ satisfying $u\leq w\leq v$.
A homomorphism from a finite index subgroup of $\Aut(A_\Gamma)$ can be constructed with image $\Z$ so that the transvection $R_u^v$ has infinite order image.
The failure of condition~\ref{B} is used to give a surjection onto $\Z$ by exploiting a partial conjugation with multiplier $v$ whose star separates $\Gamma$ into two or more connected components, but which does not dominate any other vertex.

The first main task of this paper is to modify condition \ref{B}, defining a condition \ref{B'}
to obtain more graphs $\Gamma$ for which $\Aut(A_\Gamma)$ is virtually indicable.
The key, as with \ref{B}, is to exploit certain partial conjugations.

We say a graph $\Gamma$ satisfies condition~\ref{B'} if for every vertex $x$ and component $C$ of $\Gamma\setminus \st(x)$ some non-zero power of the partial conjugation by $x$ on $C$ can be expressed as a product in $\Out(A_\Gamma)$ of partial conjugations (or their inverses) by $x$ with supports that are components of $\Gamma \setminus \st(y)$ for some $y$ dominated by, but not equal to, $x$.
See the start of Section~\ref{sec:virt ind} for another definition, and Lemma~\ref{lem:B' simply} for equivalence with the one given here.
One can observe that if \ref{B'} holds, then necessarily \ref{B} holds also.

Our first main result is the following.

\begin{thmspecial}\label{thm:no T special}
	If a simplicial graph fails property~\ref{B'} then $\Aut(A_\Gamma)$ has a finite index subgroup that admits a surjection onto $\Z$.
\end{thmspecial}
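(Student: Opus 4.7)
The plan is to use the witnessing pair $(x, C)$ of the failure of \ref{B'} to build a homomorphism from a finite-index subgroup of $\Out(A_\Gamma)$ onto $\Z$, then pull it back through $\Aut(A_\Gamma) \twoheadrightarrow \Out(A_\Gamma)$ to a finite-index subgroup of $\Aut(A_\Gamma)$. Let $P_x \leq \Out(A_\Gamma)$ denote the abelian subgroup generated by the partial conjugations by $x$, and let $N_x \leq P_x$ be the subgroup generated by those partial conjugations by $x$ whose supports are components of $\Gamma\setminus \st(y)$ for some $y$ with $y \leq x$ and $y \neq x$. The failure of \ref{B'} says precisely that $\pi_{x,C}$ has infinite order in $P_x/N_x$, so there is a homomorphism $\psi\colon P_x \to \Z$ with $\psi(N_x) = 0$ and $\psi(\pi_{x,C}) \neq 0$.

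I would then pass to a finite-index subgroup $G \leq \Out(A_\Gamma)$ chosen to kill the finite contributions of graph automorphisms and inversions, so that $G$ is generated purely by transvections $R_u^v, L_u^v$ and partial conjugations $\pi_{z, D}$. Define $\Phi\colon G \to \Z$ on generators by $\Phi(\pi_{x, C'}) = \psi(\pi_{x,C'})$ for each component $C'$ of $\Gamma\setminus\st(x)$, and $\Phi = 0$ on all other partial conjugations and all transvections. Since $\Phi$ takes values in an abelian group, it factors through $G^{\mathrm{ab}}$, so to show $\Phi$ is well-defined I only need to verify that every defining relation of $G$ is sent to zero.

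The critical relations are those in a Day-style presentation (cf.\ \cite{DayPeakReduction}) which express, for each $y \leq x$ with $y \neq x$ and each component $D$ of $\Gamma\setminus\st(y)$, the product $\prod_{C' \subseteq D}\pi_{x,C'}$ as a commutator involving the transvection $R_y^x$ (or $L_y^x$). The commutator side of such a relation is mapped to zero by $\Phi$, while the product side is mapped to $\sum_{C' \subseteq D}\psi(\pi_{x,C'})$, which vanishes because this sum defines an element of $N_x$. Relations that do not involve partial conjugations by $x$ are mapped to zero trivially, and the relation $\prod_{C'}\pi_{x,C'} = 1$ in $\Out(A_\Gamma)$, coming from the inner automorphism $\ad_x$, is consistent with $\psi$ being well-defined on $\Out$.

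The main obstacle is the careful bookkeeping required to confirm that the list of defining relations of $G$ whose right-hand side meets $P_x$ nontrivially is exhausted by the families above, and in particular that no further relation forces $\Phi(\pi_{x,C})$ into the image of $N_x$ under $\psi$. The formulation of \ref{B'} is tailored precisely so that the induced relations on $P_x$ inside $G^{\mathrm{ab}}$ are generated by $N_x$ together with $\ad_x$. To carry out this verification I would start from the argument of Aramayona and Mart\'inez-P\'erez \cite{Aramayona-MartinezPerez} treating the subcase $N_x = 0$ (condition \ref{B}), and extend it by systematically incorporating the commutator relations produced by each $y$ with $y \leq x$ and $y \neq x$. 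Once this is complete, $\Phi$ is a well-defined homomorphism whose image contains $\psi(\pi_{x,C})\Z$; pulling back through $\Aut(A_\Gamma) \twoheadrightarrow \Out(A_\Gamma)$ and rescaling yields the desired homomorphism from a finite-index subgroup of $\Aut(A_\Gamma)$ onto $\Z$.
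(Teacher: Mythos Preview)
Your approach is genuinely different from the paper's, and the gap you identify as ``bookkeeping'' is in fact the entire content of the theorem.

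You correctly observe that each generator of $N_x$ is a commutator in $\Sout^0(A_\Gamma)$: from the relation $(R_y^x)\,\pi^y_D\,(R_y^x)^{-1}=\pi^y_D\,\pi^x_{D\setminus\st(x)}$ (this is Lemma~\ref{lem:conjugates of pc by transv}\ref{relation2} with the roles of $x$ and $y$ swapped), the element $\pi^x_{D\setminus\st(x)}$ is $[R_y^x,\pi^y_D]$. So $N_x$ dies in the abelianisation, and the map $P_x\to(\Sout^0(A_\Gamma))^{\mathrm{ab}}$ factors through $P_x/N_x$. But this only gives the \emph{containment} $N_x\subseteq\ker\big(P_x\to(\Sout^0(A_\Gamma))^{\mathrm{ab}}\big)$. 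What you need is that $\pi^x_C$ survives, i.e.\ that no \emph{other} relation in Day's presentation forces a further collapse of $P_x$ in the abelianisation. Your assertion that ``the formulation of \ref{B'} is tailored precisely so that the induced relations on $P_x$ inside $G^{\mathrm{ab}}$ are generated by $N_x$ together with $\ad_x$'' is exactly the statement to be proved, not a fact you can invoke. Day's presentation has many relation types beyond the one family you name --- relations among partial conjugations with different multipliers when SILs are present, relations coming from vertices $z\geq x$ (which produce $\pi^z_{C'}$ as commutators and may feed back), and several ``exotic'' relations --- and you have not checked any of them.

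The paper sidesteps this entirely. Rather than computing the abelianisation, it first uses projection and restriction maps to pass to $\Out$ of a free product of free abelian groups, then builds an explicit homological representation by acting on the $(-1)$--eigenspace of a two-sheeted cover of the Salvetti complex. The point is that once you have an honest linear representation, you no longer need to enumerate relations: you simply compute what each generator does to a chosen subspace and read off that the image is infinite (Case~\ref{case:same y}, landing in $\PGL(2,\Z)$) or free abelian (Case~\ref{case:not same y}, after restricting to $\Span(\Pi^\perp\cup\{\x\})$). Note also that the paper genuinely needs to pass to the finite-index subgroup $\Out_{\pi,\Z}(A_\Gamma)\lneq\Sout^0(A_\Gamma)$, which is at least circumstantial evidence that working directly with the abelianisation of $\Sout^0(A_\Gamma)$ may not suffice. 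If your approach could be completed it would be more elementary, but as written it is a restatement of the goal rather than a proof.
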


We use this to answer Question~\ref{q:T} entirely when the defining graph has no SIL.

\begin{thmspecial}\label{thms:T}
	Suppose $\Gamma$ has no SIL.
	Then $\Out(A_\Gamma)$ has property (T) if and only if both properties \ref{A} and \ref{B'} hold in $\Gamma$.
\end{thmspecial}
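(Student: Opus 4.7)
The plan is to prove the two directions separately. The forward implication will be a short contrapositive using the theorems already available, while the reverse implication will rely on the structural quotients described in the introduction.

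For the ``only if'' direction, suppose either \ref{A} or \ref{B'} fails. Then Theorem \ref{thms:AMP} (in the first case) or Theorem \ref{thm:no T special} (in the second) produces a finite-index subgroup of $\Aut(A_\Gamma)$ surjecting onto $\Z$. To transfer virtual indicability from $\Aut(A_\Gamma)$ to $\Out(A_\Gamma)$, I would verify that the homomorphism in each construction vanishes on inner automorphisms, so that it descends to a surjection from a finite-index subgroup of $\Out(A_\Gamma)$ onto $\Z$. This should be essentially a bookkeeping check, since both maps are specified on transvection and partial-conjugation generators and will send conjugations (which are products of partial conjugations by a single vertex across all components of the complement of its star) to zero. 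Virtual indicability of $\Out(A_\Gamma)$ then rules out property (T).

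For the ``if'' direction, assume $\Gamma$ has no SIL and that both \ref{A} and \ref{B'} hold. The plan is to invoke \cite[Theorem 2]{GuirardelSale-vastness}, which under the no-SIL hypothesis yields a finite-index subgroup $H \leq \Out(A_\Gamma)$ sitting in a short exact sequence
\[ 1 \to N \to H \to \prod_i \SL(n_i,\Z) \to 1 \]
with $N$ finitely generated nilpotent and the $\SL(n_i,\Z)$ factors indexed by the non-trivial equivalence classes of Servatius's domination relation, each $n_i$ being the size of the corresponding class. Since property (T) is preserved by extensions of (T)-groups and is inherited from finite-index subgroups, it suffices to establish property (T) for both the kernel and the quotient.

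The quotient is handled by \ref{A}: applied to distinct equivalent vertices $u \sim v$ it supplies a third vertex $w$ necessarily equivalent to both, so every non-trivial equivalence class has size at least $3$, forcing $n_i \geq 3$ and giving property (T) of $\prod_i \SL(n_i,\Z)$ by Kazhdan's theorem. The kernel $N$ is generated by the ``off-diagonal'' transvections $R_u^v$ for strict dominations $u < v$ together with partial conjugations. Condition \ref{A} for strict dominations supplies an intermediate $w$ so that a commutator identity of the form $R_u^v = [R_u^w, R_w^v]$ (valid in the no-SIL setting by the relevant adjacency constraints) makes each such transvection a commutator, and condition \ref{B'} plays the analogous role for partial conjugations, expressing suitable powers of them via partial conjugations by strictly dominated vertices. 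The main obstacle I foresee is upgrading these abelianization-level cancellations to the genuine finiteness of $N$, which is needed because a finitely generated nilpotent group has property (T) if and only if it is finite. This will likely require an inductive climb up the upper central series of $N$, combined with the commutation relations among partial conjugations that are available precisely because $\Gamma$ has no SIL.
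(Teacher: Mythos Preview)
Your ``only if'' direction is fine; the constructions behind Theorems~\ref{thms:AMP} and~\ref{thm:no T special} are already carried out on finite-index subgroups of $\Out(A_\Gamma)$, so the descent you worry about is not needed.

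The ``if'' direction, however, has a genuine gap: the kernel $N$ in the sequence $1\to N\to H\to\prod_i\SL(n_i,\Z)\to 1$ is in general \emph{infinite}, so it cannot have property~(T), and the extension principle you invoke does not apply. For a concrete instance, take $\Gamma$ on nine vertices partitioned into three abelian equivalence classes $V_1,V_3<V_2$ of size~$3$, with every vertex of $V_1\cup V_3$ adjacent to every vertex of $V_2$ and no edges between $V_1$ and $V_3$. This $\Gamma$ has no SIL and satisfies both \ref{A} and \ref{B'} (indeed $\IA_\Gamma$ is trivial, so \ref{B'} is vacuous), yet $N\cong\Z^{18}$, generated by the off-diagonal transvections $R_u^v$ with $u\in V_1\cup V_3$ and $v\in V_2$. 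Your commutator identities $R_u^v=[R_u^w,R_w^v]$ only place $N$ inside $[H,H]$: the intermediate $w$ supplied by \ref{A} lies in $[u]$ or $[v]$, so one of $R_u^w$, $R_w^v$ sits in a diagonal $\SL(3,\Z)$ block rather than in $N$. No inductive climb up the central series can make this infinite abelian group finite.

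The paper takes a different route. It uses the standard representation sequence $1\to\IA_\Gamma\to\Sout^0(A_\Gamma)\to Q\to 1$, where $Q$ is the \emph{full} block-triangular image (retaining the off-diagonal transvections), and shows this splits when there is no SIL (Proposition~\ref{propspecial}). Rather than asking the abelian group $\IA_\Gamma$ to have (T), it decomposes a finite-index subgroup of $\IA_\Gamma$ into $Q$--invariant pieces $A_C^X$ indexed by principal pairs, realises each $Q\ltimes A_C^X$ as a block-triangular matrix group, and verifies the criterion of \cite[Proposition~4.2]{Aramayona-MartinezPerez} for each. Property~(T) for $\Sout^0(A_\Gamma)$ then follows because a group generated by normal subgroups with (T) has~(T). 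Condition~\ref{B'} is used precisely to ensure the size hypothesis on the extra block coming from $A_C^X$.
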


This also allows us to give an answer in any case when all equivalence classes in $\Gamma$ are abelian.

\begin{corspecial}\label{cors:T}
	Suppose all equivalence classes in $\Gamma$ are abelian.
	Then $\Out(A_\Gamma)$ has property~(T) if and only if
	$\Gamma$ has no SIL and  both properties \ref{A} and \ref{B'} hold.
\end{corspecial}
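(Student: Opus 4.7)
The backward implication is immediate from Theorem~\ref{thms:T}: given that $\Gamma$ has no SIL, properties \ref{A} and \ref{B'} together imply property~(T) for $\Out(A_\Gamma)$.

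For the forward direction I would argue by contrapositive under the hypothesis that all equivalence classes of $\Gamma$ are abelian. If $\Gamma$ has no SIL, then failure of either \ref{A} or \ref{B'} immediately precludes property~(T) by Theorem~\ref{thms:T}. The essential remaining task is therefore to show that, under the abelian equivalence class hypothesis, the mere existence of a SIL in $\Gamma$ is incompatible with $\Out(A_\Gamma)$ having property~(T). My plan is to exhibit a finite-index subgroup of $\Out(A_\Gamma)$ that surjects onto $\Z$ whenever $\Gamma$ has a SIL.

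To handle the SIL case, I would fix a SIL $(x,y\mid z)$ and let $C_z$, $D_z$ denote the components of $\Gamma\setminus\st(x)$ and $\Gamma\setminus\st(y)$ containing $z$; the partial conjugations $c_{x,C_z}$ and $c_{y,D_z}$ are then non-commuting in $\Out(A_\Gamma)$. Under the abelian equivalence class hypothesis every vertex $u$ equivalent to $z$ satisfies $\st(u)=\st(z)$, so the class $[z]$ has a well-defined ``link'' and partial conjugations indexed by $[z]$ behave uniformly. I would pass to the reduced graph $\Gamma'$ obtained by collapsing each equivalence class of $\Gamma$ to a single representative vertex, and construct a homomorphism from a finite-index subgroup $\Out^0(A_\Gamma)$ into $\Out^0(A_{\Gamma'})$ induced by this collapse; this is the step where abelian equivalence classes are essential, since only then do transvections within a class commute and act compatibly on the quotient. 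In $\Gamma'$ the SIL $(x,y\mid z)$ survives, while any strict dominator of $z$ lying in $[z]$ is identified with $z$ itself, leaving insufficient dominators to furnish the decomposition required by condition \ref{B'}. Theorem~\ref{thm:no T special} applied to $\Gamma'$ then produces a virtual surjection of $\Aut(A_{\Gamma'})$ onto $\Z$, which I would pull back via the collapse map to a virtual surjection of $\Out(A_\Gamma)$ onto $\Z$, contradicting property~(T).

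The main obstacle is constructing the collapse homomorphism $\Out^0(A_\Gamma)\to\Out^0(A_{\Gamma'})$ rigorously and verifying that it descends from $\Aut$ to $\Out$ in a way compatible with the SIL data. One must work carefully with the pure outer automorphism group and check that inner automorphisms and the elementary generators (transvections and partial conjugations) behave as expected under the collapse. The abelian equivalence class hypothesis is precisely what makes this feasible: non-abelian classes would produce free subgroups whose transvections would obstruct a clean collapse, and they would also supply ``internal'' dominators within $[z]$ that could conceivably rescue condition \ref{B'} in $\Gamma'$.
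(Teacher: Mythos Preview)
Your backward direction and the no-SIL branch of the forward direction are correct and match the paper.

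For the SIL case, the paper's proof is a one-line citation: when all equivalence classes are abelian and $\Gamma$ contains a SIL, $\Out(A_\Gamma)$ is \emph{large} by \cite[Theorem~2]{GuirardelSale-vastness}, and largeness immediately precludes property~(T). (The mechanism, recorded in Section~\ref{sec:what is known}, is that abelian classes together with a SIL force a \emph{special} SIL \cite[Proposition~3.9]{GuirardelSale-vastness}, which then yields largeness \cite[Proposition~3.15]{GuirardelSale-vastness}.)

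Your alternative route---collapsing equivalence classes to form $\Gamma'$ and invoking Theorem~\ref{thm:no T special} there---has a genuine gap at its central step. You assert that the surviving SIL in $\Gamma'$ leaves ``insufficient dominators to furnish the decomposition required by condition~\ref{B'},'' but this does not follow. Condition~\ref{B'} asks whether each partial conjugation $\pi^v_C$ is virtually obtainable from components indexed by vertices strictly dominated by $v$; the presence of a SIL $(x,y\mid z)$ only tells you that two particular partial conjugations fail to commute, not that any such decomposition is obstructed. The paper explicitly remarks (end of Section~\ref{sec:what is known}) that one can have $\Gamma$ with a SIL, all equivalence classes abelian, and both \ref{A} and \ref{B'} holding---and you give no argument that the passage to $\Gamma'$ must break \ref{B'} in such cases. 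Your discussion also focuses on dominators of $z$ ``lying in $[z]$,'' but vertices of $[z]$ are equivalent to $z$, not strict dominators of it; and in any case it is $\st(x)$ and $\st(y)$, not $\st(z)$, that the SIL guarantees to separate $\Gamma$, so even the vertex at which \ref{B'} is supposed to fail is unclear. The largeness input from \cite{GuirardelSale-vastness} is doing real work here; Theorem~\ref{thm:no T special} is not a substitute for it.
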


\begin{proof}
	If $\Gamma$ does contain a SIL then $\Out(A_\Gamma)$ is large by \cite[Theorem~2]{GuirardelSale-vastness}, and hence does not have property~(T).
	If $\Gamma$ does not contain a SIL then we apply Theorem~\ref{thms:T}.
\end{proof}

The proof of Theorem~\ref{thm:no T special} involves a composition of restriction and projection maps (see Section~\ref{sec:prelim:restriction projection}) to focus our attention on a smaller portion of $\Gamma$, 
ultimately mapping into the automorphism group of a free product of free abelian groups.
We then apply a homological representation by acting on a certain cover of the Salvetti complex of the free product.
The image of the composition of all these maps can be seen to admit a surjection onto $\Z$.

Once Theorem~\ref{thm:no T special} is established, to prove Theorem~\ref{thms:T} we use the standard representation of $\Out(A_\Gamma)$, obtained by acting on the abelianisation of $A_\Gamma$.
This gives a short exact sequence such that, when $\Gamma$ contains no SIL, the kernel $\IA_\Gamma$, sometimes called the Torelli subgroup, is free abelian.
To fully exploit this structure of $\Out(A_\Gamma)$, we need this sequence to be split,
which we show is (virtually) the case.

In the following, we denote by $\Sout^0(A_\Gamma)$ the subgroup of $\Out(A_\Gamma)$ of finite index that is generated by the set of all transvections and partial conjugations of $A_\Gamma$. 

\begin{propspecial}\label{propspecial}
	The standard representation of $\Sout^0(A_\Gamma)$ gives the short exact sequence
	$$1\to \IA_\Gamma \to \Sout^0(A_\Gamma) \to Q \to 1$$
	which splits if 
	$\Gamma$ has no SIL.
\end{propspecial}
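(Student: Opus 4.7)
First, the sequence is exact essentially by definition. The restricted standard representation has image $Q$ by construction, and its kernel equals $\IA_\Gamma$ (using that $\IA_\Gamma \subseteq \Sout^0(A_\Gamma)$). Since partial conjugations act trivially on $A_\Gamma^{\mathrm{ab}} \cong \Z^n$, the quotient $Q$ is generated by the images of transvections $R_u^v$, namely the elementary matrices $E_{vu} = I + e_{vu} \in \GL(n,\Z)$ indexed by pairs $(u,v)$ with $u \leq v$ and $u \neq v$.

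For the splitting, I propose to show that the subgroup $H \leq \Sout^0(A_\Gamma)$ generated by the right transvections $\{R_u^v : u \leq v, \, u \neq v\}$ maps isomorphically onto $Q$ via the standard representation. Surjectivity is automatic; the content is injectivity of $H \to Q$, equivalently the existence of an inverse section $s \colon Q \to H$. The plan is to use a Steinberg-type presentation of $Q$ as a block-triangular subgroup of $\GL(n,\Z)$: ordering vertex equivalence classes compatibly with the domination preorder, $Q$ is generated by the elementary matrices $E_{vu}$ subject to the classical Steinberg commutator relations together with commutativity relations for disjoint index sets. One then defines $s$ by $s(E_{vu}) = R_u^v$ on generators and verifies the relations.

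The main obstacle is this relation-checking. Computing in $A_\Gamma$, the Steinberg relations $[R_u^v, R_v^w] = (R_u^w)^{\pm 1}$ hold on the nose. The potentially problematic relations are the commutativity relations $[R_u^v, R_{u'}^{v'}] = 1$ for disjoint indices, which a priori in $\Aut(A_\Gamma)$ yield elements of $\IA_\Gamma$: expanding, they produce products of partial conjugations and Torelli-type generators $K_u^v = R_u^v (L_u^v)^{-1}$. This is precisely where the no-SIL hypothesis is used: with no SILs, the relevant partial conjugations either coincide with inner automorphisms (and so are trivial in $\Out$) or cancel pairwise, in the same spirit as the argument showing $\IA_\Gamma$ is free abelian in this setting. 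With this verification complete, the assignment $E_{vu}\mapsto R_u^v$ extends to a homomorphism $s \colon Q \to \Sout^0(A_\Gamma)$ whose composition with the projection $\Sout^0(A_\Gamma) \twoheadrightarrow Q$ is the identity on generators, and hence on all of $Q$, giving the required section.
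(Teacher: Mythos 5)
Your overall strategy coincides with the paper's: take a presentation of $Q$ in terms of the elementary matrices $E_u^v$, define a candidate section $\sigma\colon Q\to\Sout^0(A_\Gamma)$ by $E_u^v\mapsto R_u^v$, and verify the relations. However, there are two substantive gaps in the sketch.

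First, the presentation you invoke is incomplete. The presentation of $Q$ used in the paper (from Wade's thesis) has four families of relators. Two are the ones you discuss: the Steinberg commutator relation $[E_u^v,E_v^w]=E_u^w$ and the commutativity relation $[E_u^v,E_x^y]=1$ for disjoint indices. But there are also relations internal to each diagonal block $\SL(n_i,\Z)$, namely $(E_u^v(E_v^u)^{-1}E_u^v)^4=1$ when $u,v$ are mutually dominating, and an additional order-six relator when $\{u,v\}$ is an equivalence class of size two. These are not Steinberg commutator relations, and they do not follow from the ones you list; without them the Steinberg relations only present a central extension of $\SL(n_i,\Z)$ (by $K_2(\Z)$), not $\SL(n_i,\Z)$ itself. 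Verifying these block relations is in fact the hardest part of the paper's proof: one shows that $\langle R_u^v,R_v^u\rangle$ embeds into $\Sout^0$ of the subgroup generated by the equivalence class $[u]=[v]$, which is isomorphic to $\SL(n_i,\Z)$, and here the no-SIL hypothesis is used to control what happens when $[u]$ is free of rank two.

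Second, your claim that the Steinberg relations $[R_u^v,R_v^w]=(R_u^w)^{\pm1}$ ``hold on the nose'' in $A_\Gamma$ is not correct without a hypothesis. A direct computation gives $[R_u^v,R_v^w](u)=uvw^{-1}v^{-1}$, which agrees with $uw^{\mp1}$ only when $[v,w]=1$. That $[v,w]=1$ is exactly what Lemma~\ref{lem:domination pairs} provides from the no-SIL hypothesis (since $u\le v$ and $u\le w$ force $(v,w\mids u)$ to be a SIL otherwise). So no-SIL is already needed here, not only in the commutativity relations as you suggest. Your discussion of the commutativity relations and the appearance of partial conjugations is directionally right, but the part of the argument you identify as the essential use of no-SIL is not the only place it enters, and the block relations you omit are the part that requires the most work.
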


We note that the short exact sequence may be split even if there is a SIL. This is discussed in more detail in Remark~\ref{rem:split no sil}, where weaker sufficient conditions are given for this to occur.

Compare Proposition~\ref{propspecial} with \cite[Theorem~1.2]{GPR_automorphisms} and \cite{Tits:Coxter} where automorphism groups of certain graph products are expressed as a semidirect product in a manner similar to Propostion~\ref{propspecial}.

Finally, we comment on the fact that we show in Theorem~\ref{thms:T} that $\Out(A_\Gamma)$ has property (T), but prove nothing about $\Aut(A_\Gamma)$.
We use the fact that when there is no SIL partial conjugations commute in $\Out(A_\Gamma)$.
When dealing with automorphisms (not outer) this fact fails.
We use a decomposition of $\IA_\Gamma$ in Section~\ref{sec:T} to prove Theorem~\ref{thms:T} which involves subgroups, denoted $A_C^X$, that are normal in $\Sout^0(A_\Gamma)$.
Since they are subgroups of $\IA_\Gamma$,  they are free abelian.
This enables us to build $\Sout^0(A_\Gamma)$ out of block-triangular groups, for each of which we can verify property~(T) via a criterion of Aramayona and Mart\'inez-P\'erez \cite{Aramayona-MartinezPerez}.
When dealing with this situation for $\Saut^0(A_\Gamma)$, the groups $A_C^X$ need to include some inner automorphisms, and they no longer remain free abelian in general.

\subsection{The (un)resolved cases}
\label{sec:what is known}

We finish the introduction by summarising what is known, and what is left unknown with regards to property~(T) for outer automorphism groups of RAAGs.

\subsubsection*{Properties of $\Gamma$ that deny property~(T) in $\Out(A_\Gamma)$:}

\begin{itemize}
	\item Condition~\ref{A} fails (so $\Out(A_\Gamma)$ is virtually indicable) \cite[Corollary~1.4]{Aramayona-MartinezPerez}.
	\item Condition~\ref{B'} fails (so $\Out(A_\Gamma)$ is virtually indicable) (Theorem~\ref{thm:no T special}).
	\item If there is a non-abelian equivalence class of size three (so $\Out(A_\Gamma)$ is large) \cite[Theorem 6]{GuirardelSale-vastness}.
	\item If there is a ``special SIL'' (so $\Out(A_\Gamma)$ is large) \cite[Proposition~3.15]{GuirardelSale-vastness}. A special SIL is a SIL  $(x_1,x_2\mids x_3)$ such that
	\begin{itemize}
		\item each $x_i$ is in an abelian equivalence class,
		\item if $x_i \leq u \leq x_j$ then $u \in [x_1]\cup[x_2]\cup[x_3]$,
		\item if $u\leq x_i$, for any $i$, then there is a connected component $C$ of $\Gamma\setminus \st(u)$ so that $x_1,x_2,x_3 \in C\cup \st(u)$.
	\end{itemize}
\end{itemize}

There are a handful of cases that are covered by other means, but are contained within one of the above situations. We explain them now.

Firstly, if there is an equivalence class of size two then $\Out(A_\Gamma)$ is large and we do not have property~(T) \cite[Theorem 6]{GuirardelSale-vastness}. 
In this case we'd also have the failure of condition~\ref{A}.

We also know that $\Out(A_\Gamma)$ is virtually nilpotent if and only if there is no SIL, and
	 all equivalence classes are of size one \cite[Theorem~1.3]{Day_solvable}.
If furthermore there is either at least one vertex $v$ whose star separates $\Gamma$, or at least one pair of vertices $u,v$ satisfying $u\leq v$, then $\Out(A_\Gamma)$ must be infinite.
Finitely generated virtually nilpotent groups have property~(T) if and only if they are finite, so this gives a class of graphs where $\Out(A_\Gamma)$ does not have property~(T).
However, if all equivalence classes are of size one and there is some pair of vertices $u,v$ with $u\leq v$, then condition~\ref{A} necessarily fails.
Meanwhile, if there is no such pair $u,v$, but there is some vertex $x$ whose star separates $\Gamma$, then $x$ is minimal and so condition~\ref{B}  fails (and hence also \ref{B'}).

A third situation worth remarking upon is that if all equivalence classes are abelian and $\Gamma$ contains a SIL, then it contains a special SIL \cite[Proposition~3.9]{GuirardelSale-vastness}.
This is used for Corollary~\ref{cors:T}.

\subsubsection*{Properties of $\Gamma$ that imply $\Out(A_\Gamma)$ has property~(T):}

\begin{itemize}
	\item If $\Gamma$ has no SIL and satisfies conditions~\ref{A} and~\ref{B'} (Theorem~\ref{thms:T}).
	\item If $\Gamma$ has no edges and at least four vertices (i.e. $\Out(A_\Gamma)  = \Out(F_n)$ for $n\geq 4$) \cite{KNO_propT,KKN_propT,Nitsche-propT}.
\end{itemize}

The first point here covers both the case when $\Out(A_\Gamma) = \GL(n,\Z)$, for $n\geq 3$, as well as when $\Out(A_\Gamma)$ is finite.
The latter occurs if and only if 
 there are no vertices $x,y$ in $\Gamma$ so that $x\leq y$, and
 there is no vertex whose star separates $\Gamma$ into two or more components
 (see for example \cite[\S 6]{CF12}).

It is tempting to speculate that the no SIL condition could be removed from Theorem~\ref{thms:T}, to answer Question~\ref{q:T} completely.
However, if we allow $\Gamma$ to contain a SIL, and assume both \ref{A} and \ref{B'} hold,
then there is nothing to stop us have a free equivalence class of size three, or all equivalence classes be abelian.
In both cases, we do not have property~(T) by \cite[Theorem~6]{GuirardelSale-vastness}.

\subsection*{Paper structure}

We begin with some preliminary material in Section~\ref{sec:prelims}.
In Section~\ref{sec:VI homo rep} we establish a homological representation for a free product of free abelian groups. This representation is used in obtaining surjections onto $\Z$ later.
A new notion is introduced in Section~\ref{sec:principal} that describes which partial conjugations are necessary to virtually generate the Torelli subgroup $\IA_\Gamma$ when there is no SIL.
Theorem~\ref{thm:no T special} is proved in Section~\ref{sec:virt ind}, which is by some way the most involved part of the paper.
In Section~\ref{sec:standard rep} we establish that when there is no SIL we can express $\Sout^0(A_\Gamma)$ as a semidirect product $Q\ltimes \IA_\Gamma$, proving Proposition~\ref{propspecial}.
The proof of Theorem~\ref{thms:T} is given in Section~\ref{sec:T}.

\subsection*{Acknowledgements}
The author 
would like to thank Ric Wade for very helpful comments on a draft of this paper,
and Matthew Day, Erik Guentner and Rufus Willett for helpful comments and discussions.
He also acknowledges the support of the Simons Foundation (Simons Collaboration Grant 711986).

\section{Generators and SILs}\label{sec:prelims}

We begin with some background material on RAAGs and their automorphism groups.

RAAGs are defined via a simplicial graph $\Gamma$. A generating set of the group $A_\Gamma$ is the vertex set $V(\Gamma)$, and presentation is given by
$$A_\Gamma = \langle V(\Gamma) \mid \textrm{$[u,v]=1$ if $u$ and $v$ are adjacent in $\Gamma$}\rangle .$$

Given a simplicial graph $\Gamma$ and a vertex $v$ of $\Gamma$, the \emph{link} of $v$ is the induced subgraph on the set of all vertices connected to $v$ by an edge. It is denoted $\lk(v)$. The \emph{star} of $v$, written $\st(v)$ is the join of $\lk(v)$ with the vertex $v$.

\subsection{Domination and equivalence classes of vertices}\label{sec:prelim:dom}

This section deals with transvections on a RAAG.
We denote a transvection by $R_u^v$, for vertices $u,v\in \Gamma$, where $R_u^v(u) = uv$ and all other vertices are fixed.
Servatius made the following definition that determines when transvections are automorphisms \cite{Servatius}.

\begin{defn}\label{d:domination}
	The Servatius \emph{domination} relation $\leq$ on vertices of $\Gamma$ is given by $u\leq v$ if and only if $\lk(u)\subseteq \st(v)$.
\end{defn}

We leave as an exercise to the reader to prove that a transvection $R_u^v$ is an automorphism if and only if $u\leq v$.

\begin{rem}\label{rem:domination}
This definition can be tightened up as follows:
 if $u,v$ are adjacent then $u\leq v$ if and only if $\st(u)\subseteq \st(v)$;
 if $u,v$ are not adjacent then $u\leq v$ if and only if $\lk(u)\subseteq \lk(v)$.
 \end{rem}

The domination relation is a preorder and determines \emph{equivalence classes} of vertices.
We denote the equivalence class containing a vertex $x$ by $[x]$.
These classes come in two flavours.
They are either \emph{abelian} if all vertices in the class are pairwise adjacent, or \emph{non-abelian} (also referred to as \emph{free}) otherwise.
It is a straight-forward exercise to see that the vertices of a non-abelian equivalence class share no edges and so generate a non-abelian free group (see \cite[Lemma~2.3]{CV09}).

For subsets $X,Y$ in $\Gamma$ we write $X\leq Y$ whenever $x\leq y$ for each $x\in X$ and $y\in Y$. 
When $X$ and $Y$ are equivalence classes, if $x\leq y$ for any $x\in X$ and $y\in Y$, then $X\leq Y$.

\subsection{Generating $\Aut(A_\Gamma)$}\label{sec:prelim:gen}

The Laurence--Servatius generating set of $\Aut(A_\Gamma)$ is a finite generating set that consists of the automorphisms of $A_\Gamma$ of the following four types \cite{Laurence-thesis}.

\begin{itemize}
	\item \emph{Involutions}: automorphisms that send one generator $v$ to its inverse $v\m$ and fix all others;
	\item \emph{Graph symmetries}: automorphisms that permute the set of vertices of $\Gamma$ according to a symmetry of the graph;
	\item \emph{Transvections}: $R_u^v$ for distinct vertices satisfying $u\leq v$;
	 \item \emph{Partial conjugations}: for a vertex $v$ and a connected component $C$ of $\Gamma\setminus \st(v)$, the partial conjugation $\pi^v_C$ sends every vertex $u$ of $C$ to $v\m uv$ and fixes all others.
\end{itemize}

For transvections $R_u^v$ and partial conjugations $\pi^v_C$ we call the vertex $v$ the \emph{multiplier} and $u$ or $C$ respectively the \emph{support}.
When dealing with partial conjugations, we will also let the support $C$ denote a union of connected components of $\Gamma \setminus \st(v)$.

We denote by $\operatorname{SAut}^0(A_\Gamma)$ (respectively $\Sout^0(A_\Gamma)$) the finite-index subgroup of $\Aut(A_\Gamma)$ (respectively $\Out(A_\Gamma)$) that is generated by partial conjugations and transvections.

\subsection{Restriction and projection maps}
\label{sec:prelim:restriction projection}

In constructing virtual surjections to $\Z$, we use a composition of homomorphisms, most of which are of one of two types, restriction maps and projection (or factor) maps.
These have been exploited in the study of automorphisms  of RAAGs and other graph products before, for example \cite{CV09,CV11,GuirardelSale-vastness,SaleSusse,DayWade}.
We give a brief description of them here.

Let $\Lambda$  be a subgraph of $\Gamma$.
We can define a \emph{restriction map} to $\Out(A_\Lambda)$ from the relative outer automorphism group $\Out(A_\Gamma ; A_\Lambda)$---that is the subgroup of $\Out(A_\Gamma)$ consisting of those automorphisms that preserve the subgroup $A_\Lambda$ up to conjugacy:
$$
r \colon \Out(A_\Gamma ; A_\Lambda) \to \Out(A_\Lambda).$$
Each outer automorphism $\Phi$ in $\Out(A_\Gamma ; A_\Lambda)$ restricts to an outer automorphism $r(\Phi)$ of $A_\Lambda$.

We wish to use this when the relative outer automorphism group contains certain subgroups.
Let $X$ be a set consisting of partial conjugations and transvections.
If $X$ contains all of these then $\langle X\rangle =\Sout^0(A_\Gamma)$, but in general  this may not be the case.
The following two tests can be performed to check whether $\langle X\rangle $ is contained in $\Out(A_\Gamma ; A_\Lambda)$, and hence whether $r$ may be defined on $\langle X\rangle $:
\begin{itemize}
	\item If $R_u^v \in X$, and $u\in \Lambda$, then $v\in \Lambda$.
	\item If $\pi^w_C \in X$ and $w \notin \Lambda$, then either $\Lambda \subseteq C \cup \st(w)$ or $\Lambda \cap C = \emptyset$.
\end{itemize}

Next we discuss \emph{projection maps} (also called factor maps).
These are homomorphisms 
$$
p \colon \langle X\rangle \to \Out(A_\Lambda)$$
obtained by killing vertices of $\Gamma$ that are not in $\Lambda$.
To be more explicit, let $\kappa \colon A_\Gamma \to A_\Lambda$ be the quotient map obtained by deleting the vertices in $\Gamma\setminus \Lambda$.
Let $\phi$ be an automorphism in the outer automorphism class $\Phi$.
Then $\Phi$ is sent the the outer automorphism class of the map $\kappa(g) \mapsto  \kappa \circ\phi (g)$ for $g\in A_\Gamma$.
Thus, for $p$ to be well-defined we need the kernel of $\kappa$ to be preserved, up to conjugacy by $\langle X\rangle $.
This is true for each partial conjugation, so one just needs to check it for transvections in $X$:
\begin{itemize}
	\item If $R_u^v \in X$ and $v\in \Lambda$, then $u \in \Lambda$.
\end{itemize}

\subsection{Separating intersection of links}

Separating intersections of links (SILs) are an important feature of $\Gamma$ in relation to the properties of $\Out(A_\Gamma)$.
In particular, with no SIL, the subgroup of $\Out(A_\Gamma)$ generated by partial conjugations is abelian.
Whereas, with a SIL, this subgroup contains a non-abelian free subgroup.

\begin{defn}\label{d:SIL}
	A \emph{separating intersection of links} (SIL) is a triple of vertices $(x,y\mids z)$ in $\Gamma$ that are not pairwise adjacent, and such that the connected component of $\Gamma \setminus (\lk(x) \cap \lk(y))$ containing $z$ does not contain either $x$ or $y$.
\end{defn}

The key consequence of $\Gamma$ admitting a SIL $(x,y\mids z)$ is that, if $Z$ is the connected component of $\Gamma \setminus (\lk(x) \cap \lk(y))$ containing $z$, then $\pi^x_Z$ and $\pi^y_Z$ generate a non-abelian free subgroup of $\Out(A_\Gamma)$.

There is a relationship between SILs and equivalence classes. Notably, any three vertices in a non-abelian equivalence class determine a SIL.
Thus when considering graphs without a SIL we immediately remove the possibility of admitting non-abelian equivalence classes of size at least 3.
Furthermore, if a graph $\Gamma$ satisfies condition~\ref{A} of Theorem~\ref{thms:T} and has no SIL, then it cannot have any equivalence class of size 2 (this is immediate from condition~\ref{A}), and so all its equivalence classes must be abelian.

We end this section with some preliminary lemmas concerning SILs, particularly how they behave with respect to the domination relation.

\begin{lem}\label{lem:domination pairs}
	Suppose $u,v,w$ are distinct vertices of $\Gamma$ such that $u\leq v,w$ and $(v,w\mids u)$ is not a SIL.
	
	Then $[v,w] = 1$. 
\end{lem}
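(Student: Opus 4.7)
My plan is to prove the contrapositive: assume $v$ and $w$ are non-adjacent in $\Gamma$, and deduce that $(v, w \mids u)$ must then be a SIL. The case analysis runs over whether $u$ is adjacent to $v$, to $w$, to both, or to neither, and the key tool throughout is the two-case description of the domination relation given in Remark \ref{rem:domination}.

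In the cases where $u$ is adjacent to at least one of $v, w$, the contradiction should be immediate and symmetric. For instance, if $u$ is adjacent to $v$ then Remark \ref{rem:domination} gives $\st(u) \subseteq \st(v)$; if in addition $u$ is adjacent to $w$, then $w \in \st(u) \subseteq \st(v)$ and $w \neq v$ force $v$ and $w$ to be adjacent, whereas if $u$ is non-adjacent to $w$ then the second formulation of domination gives $v \in \lk(u) \subseteq \lk(w)$, again forcing adjacency. The case where $u$ is adjacent only to $w$ is symmetric. These subcases amount to unpacking Remark \ref{rem:domination} and do not require the SIL hypothesis.

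The only case where the SIL hypothesis itself gets used is when $u$ is adjacent to neither $v$ nor $w$. Here both instances of domination give $\lk(u) \subseteq \lk(v)$ and $\lk(u) \subseteq \lk(w)$, so $\lk(u) \subseteq \lk(v) \cap \lk(w)$. Since $u$ itself lies outside $\lk(v) \cap \lk(w)$, every neighbour of $u$ is deleted when passing to the subgraph $\Gamma \setminus (\lk(v) \cap \lk(w))$, leaving $u$ as an isolated vertex there. Its connected component is therefore $\{u\}$, which contains neither $v$ nor $w$; combined with the pairwise non-adjacency of $u, v, w$, this is precisely the definition of a SIL $(v, w \mids u)$, contradicting the hypothesis.

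The main (and really only) point of care is the last case: the argument rests on the observation that the two simultaneous domination inclusions force $\lk(u)$ into $\lk(v) \cap \lk(w)$, which is exactly what collapses $u$'s component in the deletion subgraph to a singleton. Once that inclusion is spotted, the rest is bookkeeping, and I do not anticipate any serious obstacle.
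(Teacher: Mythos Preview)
Your proposal is correct and follows essentially the same approach as the paper's proof: both argue the contrapositive, dispose of the cases where $u$ is adjacent to $v$ or $w$ by forcing $v$ and $w$ to be adjacent, and then in the remaining case use $\lk(u)\subseteq\lk(v)\cap\lk(w)$ to exhibit the SIL $(v,w\mids u)$. The paper handles the adjacent case slightly more economically---it just uses the raw definition $\lk(u)\subseteq\st(w)$ to get $v\in\st(w)$ in one line, without splitting into subcases via Remark~\ref{rem:domination}---but the substance is identical.
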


\begin{proof}
	Suppose $[v,w]\ne 1$. If $u$ and $v$ are adjacent, then $v\in \lk(u) \subseteq \st(w)$, contradicting $[v,w]\ne 1$.
	Thus $[u,v]\ne 1$, and similarly $[u,w]\ne 1$.
	Then we have $\lk(u) \subseteq \lk(v)\cap\lk(w)$ by Remark~\ref{rem:domination}, implying that $(v,w\mids u)$ is a SIL.
\end{proof}

We want to understand how partial conjugations behave under conjugation by transvections.
When there is no SIL, either the partial conjugation and transvection commute, or the conjugation action itself behaves like a transvection---see Lemma~\ref{lem:conjugates of pc by transv}.
Before stating this, we quickly note the following.

\begin{lem}\label{lem:dominating multiplier in pc}
	Suppose $x\leq y$ and $C$ is a connected component of $\Gamma \setminus \st(x)$. 
	
	Then $C ' = C \setminus \st(y)$ is a (possibly empty) union of connected components of $\Gamma\setminus \st(y)$.
\end{lem}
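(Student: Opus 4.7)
The plan is to unpack the hypothesis $x \leq y$, which by Definition~\ref{d:domination} gives $\lk(x) \subseteq \st(y)$, and consequently $\st(x) = \{x\} \cup \lk(x) \subseteq \{x\} \cup \st(y)$. To show $C' = C \setminus \st(y)$ is a union of connected components of $\Gamma \setminus \st(y)$, I would take an arbitrary connected component $D$ of $\Gamma \setminus \st(y)$ that meets $C'$ and show $D \subseteq C'$. Since $D \subseteq \Gamma \setminus \st(y)$ already, this reduces to showing $D \subseteq C$.

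The key step will be to prove $D \cap \st(x) = \emptyset$. Any vertex of $\lk(x)$ lies in $\st(y)$ by the domination hypothesis, so it cannot lie in $D$. The only remaining possibility is $x$ itself, and here one needs to rule out the case where $x$ and $y$ are non-adjacent (so that $x \in \Gamma \setminus \st(y)$ a priori). I would argue as follows: if $x \in D$, then either $D = \{x\}$, in which case the common vertex of $D$ and $C'$ would have to be $x$, contradicting $x \notin C$; or else $x$ has a neighbour $w \in D$, forcing $w \in \lk(x) \subseteq \st(y)$, which contradicts $w \in D$. Either way, $x \notin D$, and combined with $\lk(x) \cap D = \emptyset$ this yields $D \cap \st(x) = \emptyset$.

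Once $D \subseteq \Gamma \setminus \st(x)$ is established, connectedness of $D$ together with $D \cap C \supseteq D \cap C' \neq \emptyset$ places $D$ entirely inside the component $C$ of $\Gamma \setminus \st(x)$. Intersecting with $\Gamma \setminus \st(y)$ gives $D \subseteq C'$, which is exactly what is needed.

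The only subtlety, and the main obstacle, is the bookkeeping around whether $x$ lies in $\st(y)$ or not; treating $x$ separately from the rest of $\lk(x)$ handles this cleanly. Everything else is a straightforward exercise in connectedness.
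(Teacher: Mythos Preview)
Your proof is correct and follows essentially the same approach as the paper's. The paper argues contrapositively---taking a vertex $z \notin C \cup \st(y)$ and showing any path from $z$ to $C$ must pass through $\lk(x) \subseteq \st(y)$---while you take a component $D$ meeting $C'$ and show $D \cap \st(x) = \emptyset$; these are dual formulations of the same idea, and your explicit treatment of the case $x \in D$ spells out the step the paper compresses into the phrase ``and hence through $\lk(x)$.''
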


\begin{proof}
	Suppose $z$ is any vertex of $\Gamma$ not in $C$ or $\st(y)$.
	If there is a path between $z$ and a vertex of $C$, then it must pass through $\st(x)$, and hence through $\lk(x)$.
	Since $x\leq y$, the path therefore intersects $\st(y)$.
\end{proof}

Note that if $C$ is empty, then by convention we understand $\pi^x_C$ to mean the identity map.

\begin{lem}\label{lem:conjugates of pc by transv}
	Let $v,x,y\in \Gamma$ be such that $x\leq y$, and let $C$ be a connected component of $\Gamma \setminus \st(v)$. 
	Then, for $\varepsilon,\delta \in \{1,-1\}$, in $\Out(A_\Gamma)$,
	\begin{enumerate}
		\renewcommand{\theenumi}{(\roman{enumi})}
			\renewcommand\labelenumi{\theenumi}
		\item \label{relation1} $\pi^v_C$ and $R_x^{y}$ commute if $v \ne x$ and either $(v,y\mids x)$ is not a SIL or $x,y\notin C$ or $x,y \in C \cup \st(v)$,
		\item \label{relation3} $(\pi^v_C)^{\varepsilon} R^y_x (\pi^v_C)^{-\varepsilon} = (R^v_x)^{-\varepsilon} R^y_x (R^v_x)^\varepsilon$, if $v\ne x$, $(v,y\mids x)$ is a SIL, and exactly one of  $x$ or $y$ is in $C$,
		\item \label{relation2} $(R_x^y)^{\delta} \pi^x_C (R_x^y)^{-\delta} = \pi^{x}_C  (\pi^{y}_{C'})^{\delta}$ if $v=x$, where $C ' = C \setminus \st(y)$.
	\end{enumerate}
\end{lem}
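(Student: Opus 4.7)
The plan is to verify each of the three identities by direct vertex-by-vertex computation on both sides as automorphisms of $A_\Gamma$, comparing them in $\Out(A_\Gamma)$ (up to an inner automorphism if needed). Since $R^y_x$ modifies only the generator $x$ and $\pi^v_C$ only modifies vertices of $C$ (sending $u\in C$ to $v^{-1}uv$), both sides fix every vertex outside a small set, and the content reduces to a computation on a handful of generators: for (i) and (ii), essentially just the image of $x$; for (iii), the images of vertices in $C$.

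For (i), I would reduce the hypothesis to the two concrete sub-cases $x,y\notin C$ and $x,y\in C\cup\st(v)$, both of which yield equality already in $\Aut(A_\Gamma)$. The first is immediate: $\pi^v_C$ fixes $x$ and $y$, so $\pi^v_C R^y_x (\pi^v_C)^{-1}(x)=\pi^v_C(xy)=xy$. The second I would handle by exhausting the sub-sub-cases according to the locations of $x$ and $y$ in $C$ or $\st(v)$; in each, a direct computation produces $xy$ via cancellations coming either from $\pi^v_C$ acting uniformly on $C$ or from elements of $\st(v)$ commuting with $v$. The sub-sub-case $x\in\st(v),\,y\in C$ is vacuous since $x\leq y$ would force $v\in\lk(x)\subseteq\st(y)$, contradicting $y\in C$. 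I would then argue that the non-SIL hypothesis is subsumed by these two sub-cases under $x\leq y$: adjacency among $\{v,x,y\}$ places us in one of them; and if $v,x,y$ are pairwise non-adjacent with $x,y$ in distinct components of $\Gamma\setminus\st(v)$, then $x\leq y$ combined with non-adjacency gives $\lk(x)\subseteq\lk(y)$, and every $x$-$y$ path in $\Gamma$ necessarily crossing $\st(v)$ forces $\lk(x)\subseteq\lk(v)$; combining, $\lk(x)\subseteq\lk(v)\cap\lk(y)$, so the component of $x$ in $\Gamma\setminus(\lk(v)\cap\lk(y))$ is the singleton $\{x\}$, giving a SIL and contradicting the hypothesis.

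For (ii), an argument analogous to Lemma~\ref{lem:domination pairs} applied under the SIL hypothesis gives $\lk(x)\subseteq\lk(v)$, hence $x\leq v$ and $R^v_x$ is a valid automorphism. A direct computation then yields $\pi^v_C R^y_x (\pi^v_C)^{-1}(x)=xvyv^{-1}$ when $x\in C,\,y\notin C$ and $xv^{-1}yv$ when $x\notin C,\,y\in C$; the right-hand side $(R^v_x)^{-\varepsilon}R^y_x(R^v_x)^{\varepsilon}$ evaluated on $x$ produces the same two words for the appropriate choice of $\varepsilon\in\{1,-1\}$, and both sides fix every other vertex, giving equality already in $\Aut(A_\Gamma)$.

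For (iii), Lemma~\ref{lem:dominating multiplier in pc} ensures $C'=C\setminus\st(y)$ is a union of components of $\Gamma\setminus\st(y)$, so $\pi^y_{C'}$ is well-defined. Taking $\delta=1$, I verify vertex-by-vertex: on $u\in C'$ both sides send $u$ to $y^{-1}x^{-1}uxy$ (the LHS by applying the three automorphisms in order; the RHS as $\pi^x_C(y^{-1}uy)$ using that $y\notin C$ so $\pi^x_C$ fixes $y$); on $u\in C\cap\st(y)$ the LHS simplifies to $x^{-1}ux$ because $u$ commutes with $y$, matching $\pi^x_C(u)$ (here $u\notin C'$); on vertices outside $C$ both sides fix $u$; and on $x$ both sides fix $x$. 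The case $\delta=-1$ follows symmetrically. The main obstacle throughout is the combinatorial reduction in (i) from the non-SIL hypothesis to the two concrete sub-cases, which is the only step requiring a substantive graph-theoretic argument; everything else is mechanical computation on generators.
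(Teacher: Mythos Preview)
Your proposal is correct and follows essentially the same route as the paper: direct vertex-by-vertex computation, with inner automorphisms used to normalise sub-cases. The only cosmetic differences are that for \ref{relation1} the paper passes to the complementary component to reduce $x,y\in C\cup\st(v)$ immediately to $x,y\notin C$ (rather than exhausting sub-sub-cases) and invokes Lemma~\ref{lem:domination pairs} where you derive the SIL directly, while for \ref{relation3} and \ref{relation2} it likewise normalises via an inner automorphism to $y\in C,\,x\notin C$ and to $y\notin C$ respectively before computing.
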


\begin{proof}
	For part~\ref{relation1}, the conclusion when $x,y\notin C$ is immediate as the supports and multipliers are disjoint. 
	Meanwhile, if $x,y\in C\cup \st(v)$ we can multiply by an inner to get $x,y \notin C$.
	
	Now suppose that $v\ne x$ and $(v,y\mids x)$ is not a SIL.
	Up to multiplication by an inner, we may assume that $x\notin C$.
	If $v=y$ then~\ref{relation1}	holds since the supports of the transvection and partial conjugation are disjoint and the multipliers of each are fixed by one-another. So assume $v\ne y$.
	We claim that $y \notin C$ also, which implies the automorphisms commute as before.
	To prove the claim, if $x\in \st(v)$, then $x\leq y$ implies $y\in \st(v)$ too. 
	Assume $x\notin \st(v)$ and $y\in C$. Since $x\notin C$, any path from $x$ to $y$ passes through $\st(v)$, and in particular, using the fact that $x\leq y$, we must therefore have $\lk(x)\subseteq \lk(v)$. Hence $x\leq v,y$. Since $(v,y\mids x)$ is not a SIL, Lemma~\ref{lem:domination pairs} implies that $[v,y] = 1$, contradicting $y\in C$.
	
	Now assume the hypotheses for \ref{relation3} hold.
	As ($v,y\mids x)$ is a SIL we must have $x\leq v$, as $\lk(x)$ must be contained in $\lk(v)\cap\lk(y)$.
	As $C$ contains exactly one of $x$ or $y$, up to multiplication by an inner we may assume $y\in C$ and $x\notin C$.
	Direct calculation then yields
	$$	\pi^v_C R^y_x (\pi^v_C)^{-1} ( x ) = xv^{-1}yv = (R^v_x)^{-1} R^y_x R^v_x (x)$$
	while all other vertices, including those in $C$, are fixed.
	This confirms the claimed identity in \ref{relation3} when $\varepsilon=1$.
	The case when $\varepsilon=-1$ is similar.

	Now assume that $v=x$. 
	Again, up to an inner automorphism, we may assume that $y \notin C$.
	Then direct calculation (left to the reader) verifies the relation~\eqref{relation2}, with Lemma~\ref{lem:dominating multiplier in pc} ensuring $\pi^y_{C'}$ makes sense.
\end{proof}

\subsection{The standard representation}\label{sec:prelim:standard rep}

The standard representation of $\Out(A_\Gamma)$ is obtained by acting on the abelianisation of $A_\Gamma$.
We denote it by
$$\rho \colon \Out(A_\Gamma) \to \GL(n,\Z)$$
where $n$ is equal to the number of vertices in $\Gamma$.

The image of $\rho$ is described in more detail in Section~\ref{sec:standard rep}. Here we focus on the kernel.
It is denoted by $\IA_\Gamma$, and is sometimes referred to as the Torelli subgroup of $\Out(A_\Gamma)$.
Day and Wade independently proved that $\IA_\Gamma$ is generated by the set of partial conjugations and commutator transvections $R_u^{[v,w]} = [R_u^v,R_u^w]$ (see \cite[\S 3]{Day_symplectic} and \cite[\S 4.1]{WadeThesis}).
It follows that $\IA_\Gamma$ is a subgroup of $\Sout^0(A_\Gamma)$, so the kernel is unchanged when taking the restriction of $\rho$ to $\Sout^0(A_\Gamma)$.
We will abuse notation by also calling this restriction $\rho$.

The no SIL condition implies that we have no commutator transvections---this is a consequence of Lemma~\ref{lem:domination pairs}.
It is not hard to see that with no SILs all partial conjugations commute up to an inner automorphism  (it is proved in, or follows immediately from results in, each of \cite{CCV_automorphisms,GPR_automorphisms,Day_solvable,GuirardelSale-vastness}). 
In particular:

\begin{prop}\label{prop:Torelli}
	Suppose $\Gamma$ has no SIL.	
	Then $\IA_\Gamma$ is free abelian and is generated by the set of all partial conjugations.
\end{prop}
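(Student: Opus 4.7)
The plan is to combine the Day--Wade generating set for $\IA_\Gamma$ with the no-SIL hypothesis, eliminating all non-partial-conjugation generators and then reading off the abelian structure.

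I would first apply the Day--Wade result recalled in Section~\ref{sec:prelim:standard rep}: $\IA_\Gamma$ is generated by the set of all partial conjugations together with commutator transvections $R_u^{[v,w]} = [R_u^v, R_u^w]$, where necessarily $u \leq v$ and $u \leq w$. If $v=w$ then the commutator is trivial; otherwise $u,v,w$ are distinct, and the no-SIL hypothesis says $(v,w\mids u)$ is not a SIL, so Lemma~\ref{lem:domination pairs} forces $[v,w]=1$ in $A_\Gamma$. Either way $R_u^{[v,w]}$ acts as the identity on $A_\Gamma$, so the commutator transvections may be discarded, leaving the partial conjugations as a generating set for $\IA_\Gamma$.

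For the abelian structure, I would invoke the result recalled in the paragraph preceding the proposition: with no SIL, all partial conjugations commute modulo inner automorphisms (see \cite{CCV_automorphisms,GPR_automorphisms,Day_solvable,GuirardelSale-vastness}). Their images in $\Out(A_\Gamma)$ therefore pairwise commute, so $\IA_\Gamma$ is a finitely generated abelian group.

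It then remains to rule out torsion in order to upgrade ``abelian'' to ``free abelian''. The plan is to observe that the only relations among the partial conjugations in $\Out(A_\Gamma)$ are the evident ones $\prod_C \pi^v_C = 1$, one per vertex $v$, where the product ranges over the components $C$ of $\Gamma \setminus \st(v)$ (reflecting that this product represents the inner automorphism $\ad_v$, which is trivial in $\Out$). From such a presentation an explicit isomorphism of $\IA_\Gamma$ with a free abelian group of the appropriate rank drops out. I expect this independence step to be the main obstacle, since it is not a formal consequence of the earlier steps: one route is to construct an injective homomorphism from $\IA_\Gamma$ into a free abelian group by acting on carefully chosen conjugacy classes in $A_\Gamma$ designed to detect each $\pi^v_C$ separately, and another is to cite the corresponding explicit computations carried out in the no-SIL setting in \cite{CCV_automorphisms} or \cite{Day_solvable}.
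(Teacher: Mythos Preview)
Your proposal is correct and essentially reproduces the paper's approach, which consists of nothing more than the two paragraphs preceding the proposition: invoke the Day--Wade generating set, use Lemma~\ref{lem:domination pairs} to kill the commutator transvections under the no-SIL hypothesis, and cite the literature for commutativity of partial conjugations modulo inners. On the torsion-freeness step you are actually more careful than the paper, which does not spell it out and simply leans on the cited references (e.g.\ \cite{Day_solvable}) for the ``free'' in ``free abelian''.
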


\section{A homological representation for a free product of abelian groups}
\label{sec:VI homo rep}

In this section we describe a homological representation for a finite index subgroup of $\Aut(G)$
where 
$$
G = \Z^{c_0} \ast \cdots \Z^{c_s} \ast \Z^d.
$$
This representation will be used in the proof of Theorem~\ref{thm:no T special} to obtain a virtual surjection onto $\Z$ when condition~\ref{B'} fails.
It is obtained by acting on a subspace of the homology of a certain cover of the Salvetti complex associated to $G$.

For each $\Z^{c_i}$ factor, let $Z_i$ be a basis set.
For the $\Z^d$ factor write a basis as $\cal{Y} = \{y_1,\ldots,y_k,x\}$, so that $d=k+1$.
The reason for distinguishing the element $x$ from the $y_i$'s will become clear in Section~\ref{sec:virt ind} when the homological representation is applied.

We now describe the cover of the Salvetti complex on which we act to get the representation.
Take the elements of $\Z_2$ to be $1$ (the identity) and $g$ (the non-identity element).
Let $\pi \colon G \to \Z_2$ be defined by
$$
\pi(v) = \begin{cases} 
1 & \textrm{if $v \in \cal{Y}$,}\\
g & \textrm{otherwise.}
\end{cases}$$
The Salvetti complex $S$ of $G$ is a wedge of tori, of dimensions $c_0,c_1,\ldots , c_s,d$.
Let $T$ be the double cover of $S$ on which $\Z_2$ acts by deck transformations.
The \mbox{1--skeleton} of $T$ can be identified with the Cayley graph of $\Z_2$ with generating set given by the image of $Z_0\cup\cdots Z_s \cup \cal{Y}$.
Specifically, there will be edges connecting the vertex labelled $1$ to the vertex $g$ for each $a \in Z_i$. 
We denote these edges $e_a$, and for each there is a corresponding edge $g e_a$ from $g$ to $1$.
When $a$ is in $\cal{Y}$ we have two single-edge loops, $e_a$ at the vertex $1$, and its image $g e_a$ at $g$.
The edges $e_a$ and $g e_a$ for $a\in Z_i$ form the $1$--skeleton for a $c_i$--dimensional torus, a two-sheeted cover of the corresponding torus in $S$.
The $e_a$ edges for $a\in \cal{Y}$ are the $1$--skeleton for a $d$--dimensional torus, while the edges $g e_a$ form the $1$--skeleton for a copy of this torus under $g$.

Let $\Aut_\pi(G)$ be the finite-index subgroup of $\Aut(G)$ of automorphisms $\varphi$ such that $\pi\circ\varphi = \pi$.
These automorphisms are induced by homotopy equivalences of $S$ which lift to homotopy  equivalences of $T$, fixing the two vertices $1$ and $g$.
We therefore have an action of $\Aut_\pi(G)$ on the homology $H_1(T;\Q)$, which preserves $H_1(T;\Z)$ and commutes with the action of $\Z_2$.

The action we desire is on a subspace of $H_1(T;\Q)$, namely the eigenspace corresponding to the eigenvalue $-1$ for the action of $\Z_2$.
We will denote this eigenspace by $V_{-1}$.
In the special case when $c_0=c_1=\cdots=c_s=d=1$, the group $G$ is the free group $F_{s+2}$, and by G\"aschutz \cite{Gaschutz} the homology $H_1(T;\Q)$ decomposes as $\Q \oplus \Q[\Z_2]^{s+1}$, and $V_{-1} \cong \Q^{s+1}$. In general we show:

\begin{lem}\label{lem:Gaschutz here}
	Let $T$ be the two-sheeted cover of the Salvetti complex of $G$ associated to the map $\pi\colon G \to \Z_2$ defined above, and $V_{-1}$ be the $-1$--eigenspace for the action by $\Z_2$.
	Then
	\begin{enumerate}[(I)]
		\item $H_1(T;\Q) \cong \Q^{(\sum c_i) -s} \oplus \Q[\Z_2]^{d+s}$,
		\item $V_{-1} \cong \Q^{d+s}$.
	\end{enumerate}	
\end{lem}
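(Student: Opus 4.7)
The plan is to identify $T$ with an Eilenberg--Mac Lane space $K(H,1)$ for the index-two subgroup $H=\ker\pi\le G$, compute $H$ as an explicit free product using Bass--Serre theory, and then read off the $\Z_2$-module structure of $H_1(T;\Q)$ from this together with the standard transfer argument.

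Since the Salvetti complex $S$ of the free product $G=G_0\ast\cdots\ast G_s\ast G_{s+1}$ (writing $G_i=\Z^{c_i}$ for $i\le s$ and $G_{s+1}=\Z^d$) is a wedge of tori, it is a $K(G,1)$; hence its connected cover $T$ is a $K(H,1)$, so $H_1(T;\Q)\cong H^{\mathrm{ab}}\otimes\Q$. To compute $H$, consider its action on the Bass--Serre tree $T_{BS}$ of the free product, whose $G$-quotient is the star with one central vertex and $s+2$ leaves. The restriction $\pi\vert_{G_{s+1}}$ is trivial while $\pi\vert_{G_i}$ is surjective for each $i\le s$, so a coset/orbit count shows that $H\backslash T_{BS}$ consists of two central vertices (trivial stabilizer), a single leaf vertex with stabilizer $\ker(\pi\vert_{G_i})\cong\Z^{c_i}$ for each $i\le s$ (adjacent to both central vertices), two separate leaf vertices each with stabilizer $\Z^d$ (from $i=s+1$, each adjacent to one central vertex), and $2s+4$ edges with trivial edge groups. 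The underlying graph thus has $|V|=s+5$ and $|E|=2s+4$, so its first Betti number is $s$. Since every edge group is trivial, Bass--Serre theory yields
\[
H\ \cong\ \Z^{c_0}\ast\cdots\ast\Z^{c_s}\ast\Z^d\ast\Z^d\ast F_s,
\]
and hence $\dim_\Q H_1(T;\Q)=\sum c_i+2d+s$.

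To promote this dimension count to the $\Z_2$-module structure, apply the transfer for the regular $\Z_2$-cover $T\to S$: the invariants $V_{+1}=H_1(T;\Q)^{\Z_2}$ are isomorphic to $H_1(S;\Q)=\Q^{\sum c_i+d}$. Subtracting gives $\dim V_{-1}=d+s$, which is (II). Since every finite-dimensional $\Q[\Z_2]$-module splits as a sum of trivial and sign representations, and $\Q[\Z_2]\cong\Q\oplus\Q_{\mathrm{sgn}}$, we may group the $d+s$ sign summands with $d+s$ of the trivial summands into regular representations, leaving $(\sum c_i+d)-(d+s)=(\sum c_i)-s$ trivial summands on their own; this produces
\[
H_1(T;\Q)\ \cong\ \Q^{(\sum c_i)-s}\oplus\Q[\Z_2]^{d+s},
\]
which is (I). The bulk of the work lies in the orbit analysis for $H$ on $T_{BS}$: the spoke at $\Z^d$ doubles because $\pi$ is trivial on $G_{s+1}$, while each spoke at $\Z^{c_i}$ remains single because $\pi$ is surjective on $G_i$, producing a quotient graph with exactly $s$ independent cycles and thus the free factor $F_s$ in the Kurosh-type decomposition of $H$. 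Once the quotient graph of groups is correctly identified, everything else is formal.
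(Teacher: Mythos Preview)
Your argument is correct and takes a genuinely different route from the paper's proof.

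The paper computes everything at the level of the cellular chain complex of $T$: it writes down explicit bases for the $1$--cycles, for the $-1$--eigenspace $\ker\partial_1^{(-1)}$, and for $\im\partial_2$ (noting that all $2$--boundaries already lie in the $-1$--eigenspace), and then reads off the dimensions and an explicit basis for $V_{-1}$. Your approach instead identifies $T$ as a $K(H,1)$ and uses Bass--Serre theory to decompose $H=\ker\pi$ as a free product, combined with the transfer isomorphism $H_1(T;\Q)^{\Z_2}\cong H_1(S;\Q)$ and semisimplicity of $\Q[\Z_2]$--modules. The orbit analysis on the Bass--Serre tree is carried out correctly, and the dimension count matches.

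What each buys: your argument is more structural and avoids any cell-by-cell bookkeeping; it would also generalise more readily to other finite covers. The paper's direct computation, however, produces an \emph{explicit} basis $\x,\y_1,\ldots,\y_k,\z_1,\ldots,\z_s$ for $V_{-1}$ as a by-product, and this basis is used immediately afterwards (Section~\ref{sec:action of partial conj} and Lemma~\ref{lem:inner product}) to compute the action of partial conjugations in coordinates. If you adopt your approach in place of the paper's, you would still need to exhibit these particular cycles and check they form a basis of $V_{-1}$ before proceeding, so in the context of the paper the saving is modest.
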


\begin{proof}
	Let $A_i$ denote the $\Q$--vector space of $i$--dimensional chains in $T$.
	Each of these decomposes into the sum of $+1$ and $-1$--eigenspaces under the $\Z_2$ action: $A_i = A_i^{(+1)} \oplus A_i^{(-1)}$.
	The boundary maps $\partial_i \colon A_i \to A_{i-1}$ commute with the action of $\Z_2$ so we get restrictions of these to the eigenspaces:
	$$\partial_i^{(+1)} \colon A_i^{(+1)} \to A_i^{(+1)} \ \ \textrm{and} \ \ \partial_i^{(-1)} \colon A_i^{(-1)} \to A_i^{(-1)}.$$
	The eigenspace $V_{-1}$ of $H_1(T;\Q)$ is the quotient $\ker \partial_1^{(-1)} / \im \partial_2^{(-1)}$.
	
	Fix a vertex $z_0 \in Z_0$.
	The space of all $1$-cycles has dimension $2d+2(\sum c_i)-1$ and basis given by the following vectors:
	\begin{itemize}
		\item $e_x , e_{y_1}, \ldots , e_{y_k}$ and $ge_x , ge_{y_1}, \ldots , ge_{y_k}$,
		\item $e_{z_0} - e_a$ and $g(e_{z_0}-e_a)$ for $a\in Z_0 \cup \cdots \cup Z_s$, $a\ne z_0$,
		\item $e_{z_0} + ge_{z_0}$.
	\end{itemize}
	We can describe its structure as 
	$$\ker \partial_1 \cong \Q \oplus \Q[\Z_2]^{d + (\sum c_i) - 1}.$$
	Moving onto the \mbox{$-1$--eigenspace},
	the dimension of $\ker \partial_1^{(-1)}$ is $k+(\sum c_i)$ and a basis is given by:
	\begin{itemize}
		\item $(1-g)e_x, (1-g)e_{y_1}, \ldots , (1-g) e_{y_k}$,
		\item $(1-g)(e_{z_0} - e_a)$ for $a\in Z_0 \cup \cdots \cup Z_s$, $a\ne z_0$.
	\end{itemize}

	There are two types of $2$--cells found in $T$.
	The first are those coming from the commutator relation between two vertices in $Z_i$, and the second are those from the commutator relation between two vertices in $\cal{Y}$.
	The latter have boundary equal to zero in $A_1$.
	The former have boundary $\pm (e_a + g e_b -g e_a -e_b)$, for $a,b\in Z_i$.
	In particular, $\im \partial_2$ is generated by $(1-g)(e_a-e_b)$ for $a,b\in Z_i$, and $i=0,\ldots,s$.
	We can fix a vertex $z_i$ in each of the remaining classes $Z_i$.
	Then a basis for $\im \partial_2$ is given by 
	$$\{ (1-g)(e_{z_i}-e_a) \mid a\in Z_i\setminus\{z_i\}, \  i=0,\ldots,s \}.$$
	We therefore have $\dim (\im \partial_2) = \sum (c_i - 1)$.
	Furthermore, these are all cycles in the $-1$--eigenspace of $A_1$, so $\im \partial_2 = \im \partial_2^{(-1)}$.
	We conclude that $V_{-1}$ has dimension
	$$k+\sum\limits_{i =0}^s c_i - \sum\limits_{i =0}^s (c_i - 1) = k + s + 1 = d+s$$
	as required.
	
	Finally, the claimed structure of $H_1(T;\Q)$ follows by the fact that the following list of elements form a basis:
	\begin{itemize}
		\item $e_x , e_{y_1}, \ldots , e_{y_k}$ and $ge_x , ge_{y_1}, \ldots , ge_{y_k}$,
		\item $e_{z_0} - e_{z_i}$ and $g(e_{z_0}-e_{z_i})$ for $i = 1,\ldots , s$,
		\item $e_{z_i} - e_a = g(e_{z_i} - e_a)$ for $a\in Z_i$ and $i = 0,\ldots , s$,
		\item $e_{z_0} + ge_{z_0}$.
	\end{itemize}
\end{proof}

We note that, following the proof of Lemma~\ref{lem:Gaschutz here}, we can write down a basis for $V_{-1}$ as follows:
\begin{itemize}
	\item $\x = (1-g)e_x$,
	\item $\y_j = (1-g)e_{y_j}$ for $j=1,\ldots , k$,
	\item $\z_i = (1-g)(e_{z_0} - e_{z_i})$ for $i=1,\ldots , s$.
\end{itemize}

We thus have a representation of $\Aut_\pi(G)$ obtained by acting on $V_{-1}$:
$$
\Aut_\pi(G) \to \PGL(V_{-1}).$$
We observe that inner automorphisms act on $V_{-1}$ as $-1$, so are in the kernel of this representation.
This means that it factors through the finite index subgroup $\Out_\pi(G)$ of $\Out(G)$ that is the quotient of $\Aut_\pi(G)$ by the inner automorphisms.
Thus we define the representation $\rho_\pi$ to be the representation on $\Out_\pi(G)$:
$$
\rho_\pi \colon \Out_\pi(G) \to \PGL(V_{-1}).
$$

\subsection{The action of partial conjugations}\label{sec:action of partial conj}

We now take the time to look at how partial conjugations behave under the representation $\rho_\pi$.
Recall that we are acting on the projective space associated to $V_{-1}$.

We first look at how the partial conjugations $\pi^x_{Z_i}$ act on the vectors $\z_j$.
First assume $i=0$.
Then $\pi^x_{Z_0}$ sends $z_0$ to $x^{-1} z_0 x$ in $G$.
This means that $e_{z_0}$ is sent to $-e_x + e_{z_0} + g e_x = e_{z_0} - (1-g) e_x$,
and we get, for each $j=1,\ldots, r$,
$${\rho_\pi} ( \pi^x_{Z_0} ) (\z_j) = (1-g)(e_{z_0} - (1-g)e_x - e_{z_j}) = \z_j - (1-g)^2 e_x = \z_j - 2 \x.$$
All other basis vectors are fixed by $\pi^x_{Z_0}$.
Thinking of its matrix representation, if we order the basis elements as $\z_1,\ldots,\z_r , \y_1,\ldots\y_k,\x$, then the matrix for $\pi^x_{Z_0}$ is as follows, with the first block marking off the basis vectors $\z_i$, the second block for $\y_i$ and the final block for $\x$.
\newcommand{\rvline}{\hspace*{-\arraycolsep}\vline\hspace*{-\arraycolsep}}
$$
{\rho_\pi} ( \pi^x_{Z_0} ) =
\left(
\begin{array}{cccc|cccc|c}
1 & 0 &\cdots & 0 & 0 & 0 & \cdots & 0 &  0 \\
0 & 1 & \cdots & 0 &  0 & 0 &\cdots & 0 &  0 \\
\vdots & & \vdots &  &  &\vdots & & \vdots & \\
0 & 0 & \cdots & 1 & 0 & 0 &\cdots & 0 &  0 \\
\hline 
0 & 0 & \cdots & 0 & 1 & 0 &\cdots & 0 & 0 \\
0 & 0 & \cdots & 0 & 0 & 1 &\cdots & 0 & 0 \\
\vdots & & \vdots & &  &\vdots & & \vdots & \\
0 & 0 & \cdots & 0 & 0 & 0 &\cdots & 1 & 0 \\
\hline
-2 & -2 & \cdots & -2 & 0 & 0 &\cdots & 0 &  1 \\
\end{array}\right).$$
Similar calculations yield that ${\rho_\pi}(\pi^x_{Z_i})$, for $i=1,\ldots,s$, will be a transvection:
$$\bar{\rho_\pi}(\pi^x_{Z_i}) ( \z_j ) = 
\begin{cases} 
\z_j + 2\x & \textrm{if $i=j$,}\\
\z_j & \textrm{otherwise.}
\end{cases}$$
The matrix representation for this will be an elementary matrix differing from the identity by a $2$ in the appropriate entry.

Extending this to other multipliers, 
we will use the standard inner product on $\Q^s$ to describe the action of the partial conjugation on vectors in $W = \Span_\Q(\z_1,\ldots, \z_s) \cong \Q^s$.
Consider a partial conjugation $\pi^a_D$, for $a \neq z$.
Up to an inner automorphism, we can assume that $Z_0 \not\subseteq D$.
Then vectors $\z_i$ are fixed whenever $Z_i\not\subseteq D$.
First let's assume $a = y_j$ for some $j$.
When $Z_i\subseteq D$, we have
$\rho_\pi(\pi^{y_j}_D) (e_{z_i}) = -e_{y_j} +e_{z_i} + ge_{y_j}$.
We therefore get
$$\rho_\pi(\pi^{y_j}_D) (\z_i) = 
\begin{cases}
\z_i + 2 \y_j & \textrm{if $Z_i \subseteq D$,} \\
\z_i   & \textrm{if $Z_i \not\subseteq D$.}
\end{cases}$$
If instead $a = z_j$ for some $j$, then 
$\rho_\pi(\pi^{z_j}_D) (e_{z_i}) = -ge_{z_j} +ge_{z_i} + e_{z_j}$ and
$$\rho_\pi(\pi^{z_j}_D) (\z_i) = 
\begin{cases}
\z_i - 2 \z_j & \textrm{if $Z_i \subseteq D$,} \\
\z_i   & \textrm{if $Z_i \not\subseteq D$.}
\end{cases}$$

Another way to write this is by using the standard inner product on $\Q^s\cong W$.
Define $\w_D \in W$ as in Lemma~\ref{lem:span principal}: the $i$--th entry is equal to $1$ if $Z_i \subseteq D$, and $0$ otherwise.
Then
$$\rho_\pi(\pi^{y_j}_D) (\z_i) = 
\z_i + 2\langle \z_i , \w_D \rangle \y_j \ \ \textrm{ and } \ \ \rho_\pi(\pi^{z_j}_D) (\z_i) = 
\z_i - 2\langle \z_i , \w_D \rangle \z_j.
$$
This extends linearly over $W$, as per the following lemma.

\begin{lem}\label{lem:inner product}
	For $\v \in W$, and partial conjugations $\pi^{y_j}_D$ with $Z_0 \not\subseteq D$ and $j=1,\ldots,k$, we have  
	$$\rho_\pi(\pi^{y_j}_D)(\v) = \v + 2\langle \v,\w_D \rangle \y_j,$$
	$$\rho_\pi(\pi^{y_j}_D)(\x) = \x, \ \ \rho_\pi(\pi^{y_j}_D)(\y_l) = \y_l, \textrm{ for $l = 1,\ldots,k$.}$$
	For partial conjugations $\pi^z_D$ with $Z_0 \not\subseteq D$ and $z\in Z_i$, for $i=1,\ldots, s$, we have  
	$$\rho_\pi(\pi^z_D)(\v) = \v - 2\langle \v,\w_D \rangle \z_i,$$
	$$\rho_\pi(\pi^z_D)(\x) = \pm\x, \ \ \rho_\pi(\pi^z_D)(\y_l) = \pm\y_l \textrm{ for $l = 1,\ldots,k$.}.$$
	Meanwhile, any partial conjugation with multiplier $z$ in $Z_0$ acts as $-1$ on $\z_i$ if $Z_i \subseteq D$. Hence
	$$\rho_\pi(\pi^z_D) = \Id.$$
\end{lem}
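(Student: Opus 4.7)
The lemma extends the single-basis-vector computations carried out just before it to arbitrary $\v \in W$ by $\Q$-linearity, together with a short case analysis on how the support $D$ meets the clique $\cal{Y}$. Under the identification $W \cong \Q^s$ via the basis $\{\z_1,\ldots,\z_s\}$, the standard inner product satisfies $\langle \z_k, \w_D\rangle = 1$ when $Z_k \subseteq D$ and $0$ otherwise. Substituting the already-established single-vector formulas $\rho_\pi(\pi^{y_j}_D)(\z_k) = \z_k + 2\y_j$ (when $Z_k \subseteq D$, fixed otherwise) and $\rho_\pi(\pi^{z_j}_D)(\z_k) = \z_k - 2\z_j$ (when $Z_k \subseteq D$, fixed otherwise) into a general $\v = \sum c_k\z_k$ and using $\Q$-linearity of $\rho_\pi(\pi^{y_j}_D)$ and $\rho_\pi(\pi^z_D)$ yields the stated formulas $\v + 2\langle\v,\w_D\rangle\y_j$ and $\v - 2\langle\v,\w_D\rangle\z_i$ on $W$.

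For the action on $\x$ and $\y_l$ under $\pi^{y_j}_D$: since $\cal{Y}$ is a clique containing $y_j$, we have $\cal{Y}\subseteq\st(y_j)$, so any support $D$ of such a partial conjugation is disjoint from $\cal{Y}$. Hence $\pi^{y_j}_D$ fixes each element of $\cal{Y}$, so fixes the chains $e_x$ and $e_{y_l}$, and therefore fixes the cycles $\x$ and $\y_l$ in $V_{-1}$.

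For the corresponding action under $\pi^z_D$ with $z \in Z_i$ and $i\geq 1$: the clique $\cal{Y}$ is a single connected component of $\Gamma\setminus\st(z)$, so is either entirely contained in $D$ or disjoint from $D$. In the disjoint case $\x$ and $\y_l$ are fixed exactly as above. In the contained case, a direct covering-space chain calculation parallel to the one done for $\z_k$, but now with $e_x$ or $e_{y_l}$ a loop at the basepoint $1$ rather than an edge to $g$, yields $\rho_\pi(\pi^z_D)(\x) = -\x$ and $\rho_\pi(\pi^z_D)(\y_l) = -\y_l$. These two cases together account for the $\pm$ in the statement.

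Finally, for multiplier $z \in Z_0$: the key observation is that in $V_{-1}$ the chain $(1-g)e_z$ equals $(1-g)e_{z_0}$, since they differ by an element of $\im\partial_2^{(-1)}$. Substituting this into the standard chain computation for $\rho_\pi(\pi^z_D)(\z_k)$ collapses the transvection-type term, so the action on each $\z_k$ with $Z_k\subseteq D$ reduces to $-\z_k$, and similarly the action on $\x$ and $\y_l$ becomes $\pm\x$, $\pm\y_l$ according to whether $\cal{Y}\subseteq D$. Using that $\pi^z_D$ is equivalent in $\Out$ (via composition with $\iota_z$, which acts on $V_{-1}$ as $-\Id$) to the partial conjugation with complementary support in $\Gamma\setminus\st(z)$, these representatives coincide in $\PGL(V_{-1})$ as claimed. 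The main challenge throughout is careful sign bookkeeping through the chain-level identifications modulo $\im\partial_2^{(-1)}$; no deeper obstruction arises.
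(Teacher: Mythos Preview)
Your approach is essentially identical to the paper's: extend the single-basis-vector computations preceding the lemma by $\Q$--linearity over $W$, and handle $\x,\y_l$ by a case split on whether $\cal{Y}$ lies in $D$. Your observation that $(1-g)e_z \equiv (1-g)e_{z_0}$ modulo $\im\partial_2^{(-1)}$ is a clean way to reduce the general $z\in Z_0$ case to the distinguished vertex $z_0$; the paper carries out the direct chain computation (implicitly with $z=z_0$) and reaches the same $\z_i\mapsto -\z_i$, $\x\mapsto -\x$ conclusions.

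One caveat on your last sentence: it does not do what you want. Observing that $\pi^z_D$ and $\iota_z\cdot(\pi^z_{D^c})^{-1}$ represent the same outer class only tells you their images in $\PGL(V_{-1})$ agree \emph{with each other}, which is automatic; it says nothing about that common image being the identity. The paper's proof simply records the $\pm 1$ action on each basis vector via direct calculation and stops there, without invoking complementary supports, so you should drop that final step rather than rely on it.
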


\begin{proof}
	The statements regarding partial conjugations with multipliers $y = y_j$ or $z\in Z_i$ for $i>0$ follow from the discussion preceding the lemma, except when acting on $\x$ or $\y_j$. 
	With multiplier $y_j$ the action on $\x$ and $\y_l$ is trivial since $x$ and $y_l$ commute with $y_j$.
	With multiplier $z$, either $\cal{Y}$ is not in $D$ and the vectors $\x,\y_1,\ldots,\y_k$ are fixed, or $e_x$ (respectively $e_{y_l}$) is sent to $ge_x$ (respectively $ge_{y_l}$), meaning $\x \mapsto -\x$ (respectively $\y_j \mapsto -\y_l$).
	
	The final statement concerning those with multiplier $z\in Z_0$ follows from direct calculation.
	If $z_i \in D$, then 
	$$\pi^z_D(z_i) = z^{-1}z_i z 
	\implies 
	e_{z_i} \mapsto -ge_{z_0} + g e_{z_i} + e_{z_0} 
	\implies 
	\z_i \mapsto -\z_i;$$
	if $x \in D$, then
	$$\pi^z_D(x) = z^{-1}x z 
	\implies 
	e_{x} \mapsto -ge_{z_0} + g e_{x} + ge_{z_0} = ge_x
	\implies 
	\x \mapsto -\x.$$
	Calculations for any $y_i\in D$ are the same as for $x$.
\end{proof}

\section{Principal equivalence class-component pairs}\label{sec:principal}

We will introduce a new notion that concerns partial conjugations, 
with the objective being to refine the set of partial conjugations necessary to virtually generate the Torelli subgroup $\IA_\Gamma$ when $\Gamma$ has no SIL (see Lemma~\ref{lem:virtual torelli}).

Before we do so, we introduce some terminology.
Deleting the star of a vertex $x$ can divide $\Gamma$ up into multiple pieces, each the support of a partial conjugation with multiplier $x$.
We use the following to refer to these pieces.

\begin{defn}
	Let $x$ be a vertex of $\Gamma$. We use the phrase \emph{$x$--component} to mean a connected component of $\Gamma\setminus \st(x)$.
	
	Let $X$ be a subset of $\Gamma$. An \emph{$X$--component} is an $x$--component $C$ for some $x\in X$.
\end{defn}

Note that the terminology is similar, but different, to that found in \cite{CV11}, where a $\hat{v}$--component consists of a set of vertices in $\Gamma\setminus \st(v)$ that can be connected by a path that has no edge in $\st(v)$.

We will be focussing on $X$--components when $X$ is an equivalence class. There are two possibilities here depending on whether $X$ is abelian or not.
If it is, then each $X$--component is also an $x$--component for each $x\in X$.
If $X$ is a free equivalence class and $C$ is an $X$--component that does not intersect $X$, then $C$ is again an $x$--component for each $x\in X$.
However this is not always the case, the (only) exception to this rule being the following. If $x,x'\in X$, then $\{x'\}$ is an $x$--component, though clearly not an $x'$--component. 
In particular, each vertex in a free equivalence class $X$ forms an $X$--component.

If $x\geq y$, then by Lemma~\ref{lem:dominating multiplier in pc}, the $x$--components are each a union of $y$--components, with vertices from $\st(x)$ removed.
We are interested in when a partial conjugation $\pi^x_C$ can be expressed as a product in $\Out(A_\Gamma)$ of partial conjugations whose supports are $y$--components, for $y\leq x$.

\begin{defn}\label{def:virtuall obtained from dominated components}
	Let $x$ be a vertex of $\Gamma$ and $C$ an $x$--component.
	We say the partial conjugation  $\pi^x_C$ is \emph{virtually obtained from dominated components} if
	there exists $n \ne 0$ so that in $\Out(A_\Gamma)$
	$$
	(\pi^x_C)^n = (\pi^x_{D_1})^{\varepsilon_1} \cdots (\pi^x_{D_m})^{\varepsilon_2}
	$$
	where for each $i$ the set $D_i$ is a $y_i$--component for some $y_i$ dominated by, but not equal to $x$.
	
	If furthermore each $D_i$ can be taken to be a $y_i$--component for some $y_i$ dominated by, but not \emph{equivalent} to $x$, then we say 
	$\pi^x_C$ is \emph{virtually obtained from dominated non-equivalent components}.
\end{defn}

The product in Definition~\ref{def:virtuall obtained from dominated components} is considered within $\Out(A_\Gamma)$. We note that in certain cases it is equivalent to look at products within $\Aut(A_\Gamma)$, however this is not always the case. 
Indeed, provided there is some vertex $y$ (respectively equivalence class $Y$) dominated by, and distinct from, the vertex $x$ (respectively equivalence class $X$), the inner automorphism by $x$ is itself virtually obtained from dominated components (respectively from dominated non-equivalent components).
However, if $\Gamma\setminus\st(x)$ is connected and $x$ is not dominated by any other vertex, then $\pi^x_C$ is inner and so is virtually obtained from dominated components (in $\Out(A_\Gamma)$, with the empty word).

Note that, the product being in $\Out(A_\Gamma)$ means $\pi^x_C$ (or its power) can be constructed using the complement of $C$ in $\Gamma\setminus \st(x)$.

\begin{defn}\label{def:principal}
	Let $X$ be an equivalence class in $\Gamma$ and $C$ an $X$--component.
	We say the pair $(X,C)$ is \emph{non-principal}
	if $\pi^x_C$ is virtually obtained from dominated non-equivalent components for any $x\in X$.
	Otherwise, we say $(X,C)$ is \emph{principal}.
\end{defn}

We introduce some notation here.
Let $Y$ be an equivalence class of $\Gamma$ and $C$ a $Y$--component.
Define the set $P^Y_C$ to be
$$P^Y_C = \{\pi^x_{C'} \mid x \geq Y, C' = C \setminus \st(x)\}.$$
Then $\pi^x_C$ is virtually obtained from dominated non-equivalent components if there is a non-zero integer $n$ such that
\begin{equation}\label{eq:principal}
(\pi^x_C)^n \in \grp{P^Y_D \mid Y\leq X, Y\neq X, \textrm{ and $D$ a $Y$--component}}
\end{equation}
for each $x\in X$.

Observe that if for any $x\in X$ there is only one $x$--component, then $\pi^x_C$ is inner and hence is virtually obtained from dominated non-equivalent components.
Thus, if $(X,C)$ is principal then $\Gamma\setminus\st(x)$ is not connected for any $x\in X$.

An immediate example of principal pairs $(X,C)$ occur when $X$ is a domination-minimal equivalence class such that for any $x\in X$ the star of $x$ separates $\Gamma$.
A simple non-example is given  in Example~\ref{ex:nonprincipal} below. 
An example where we have to take $n>1$ is given in Example~\ref{ex:nonobv-nonprincipal}.
Further examples are given in Section~\ref{sec:virt ind} below, in Examples~\ref{ex-case1}, \ref{ex-case3}, and~\ref{ex-case2}.

\begin{ex}\label{ex:nonprincipal}
	We consider the graph in Figure~\ref{fig:ex-nonprincipal}.
	In this graph, the vertex $x$ is dominated by vertices $y,z_1,z_2$ and $z_3$, and forms its own equivalence class $X=\{x\}$.
	The $X$--components are $C_1 = \{z_1\}$, $C_2 = \{z_2\}$, and $C_3 = \{z_3\}$.
	The pairs  $(X,C_1)$ and $(X,C_2)$ are non-principal since $C_1$ and $C_2$ are also  $z_3$--components,
	meaning that $\pi^x_{C_1} \in P^{[z_3]}_{C_1}$ and $\pi^x_{C_2} \in P^{[z_3]}_{C_2}$.
	Since $(\pi^x_{C_3}) ^{-1} = \pi^x_{C_1} \pi^x_{C_2}$ in $\Out(A_\Gamma)$, we also deduce that $(X,C_3)$ is non-principal as $\pi^x_{C_3} \in \langle P^{[z_3]}_{C_1},P^{[z_3]}_{C_2}\rangle $.

	\begin{figure}[h!]
		\centering
		\labellist
		\pinlabel $x$ at 95 162
		\pinlabel $z_3$ at 95 0
		\pinlabel $z_1$ at -8 83
		\pinlabel $z_2$ at 225 83
		\pinlabel $y$ at 183 162
		\endlabellist
		\includegraphics[width=7cm]{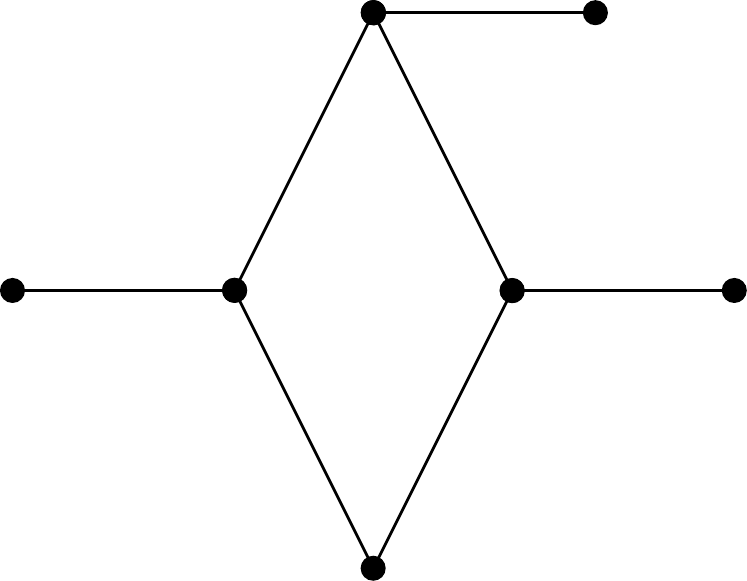}
		\caption{		
		}		\label{fig:ex-nonprincipal}
	\end{figure}
	
\end{ex}

\begin{ex}\label{ex:nonobv-nonprincipal}
	We consider the graph $\Gamma$ constructed using Figure~\ref{fig:ex-nonob-nonprincipal}.
	There are four $x$--components, $C_0,C_1,C_2$, and $C_3$, labelled so that $z_i \in C_i$.
	The only vertices dominated by $x$ are those added to the diagram: $y_1$, $y_2$, and $y_3$.
	The $y_1$--components are $C_0 \cup C_3$ and $C_1\cup C_2$;
	the $y_2$--components are $C_0 \cup C_1$ and $C_2 \cup C_3$;
	the $y_3$--components are $C_0 \cup C_2$ and $C_1 \cup C_3$.
	Then 
	$$\pi^x_{C_0 \cup C_3} \pi^x_{C_0 \cup C_1} \pi^x_{C_0 \cup C_2} = (\pi^x_{C_0})^{2} \ad_x = (\pi^x_{C_0})^{2}$$
	implying
	$$(\pi^x_{C_0})^{2} \in \left\langle P^{[y_1]}_{C_0 \cup C_3} , P^{[y_2]}_{C_0 \cup C_1} , P^{[y_3]}_{C_0 \cup C_2}\right\rangle.$$
	
	\begin{figure}[h!]
		\centering
		\labellist
		\pinlabel $x$ at 310 320
		\pinlabel $z_3$ at 100 0
		\pinlabel $z_1$ at 470 570
		\pinlabel $z_2$ at 470 0
		\pinlabel $z_0$ at 100 570
		\endlabellist
		\includegraphics[width=9cm]{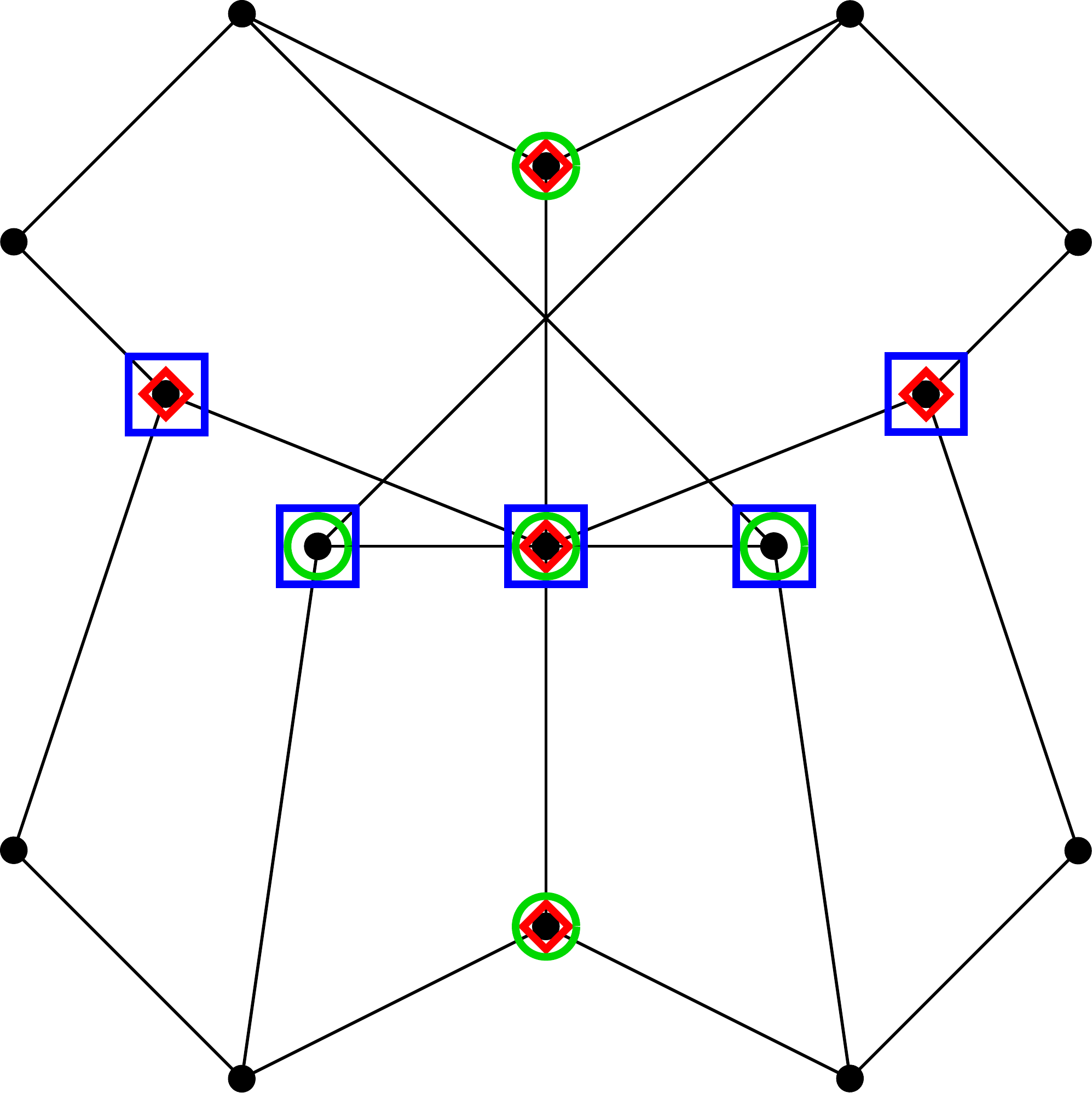}
		\caption{To construct $\Gamma$, add three vertices to the graph shown here.
			Add a vertex $y_1$ whose link consists of the vertices with a green circle \textcolor{green}{$\bigcirc$};
			Add a vertex $y_2$ whose link consists of the vertices with a blue square \textcolor{blue}{$\square$};
			Add	a vertex $y_3$ whose link consists of the vertices with a red diamond \textcolor{red}{$\diamond$}.
		}		\label{fig:ex-nonob-nonprincipal}
	\end{figure}	
\end{ex}

The following lemma translates the problem of determining principality into a linear algebra question.
We will exploit this approach in Section~\ref{sec:virt ind image}.

\begin{lem}\label{lem:span principal}
	Let $X$ be an equivalence class and let $C_0,C_1,\ldots, C_r$ be the $X$--components.
	Let $W$ be an $r$--dimensional vector space over $\Q$, and let $\{\z_1,\ldots,\z_r\}$ be a basis.
	For each vertex $y$ dominated by, but not in, $X$, and each $y$--component $D$, let $\w_D$ denote the vector 
	$$\w_D = w_1 \z_1 + \cdots + w_r \z_r$$
	where 
	$$w_i = \begin{cases}
	1 & \textrm{if $C_i \subset D$,} \\
	0 & \textrm{otherwise.}
	\end{cases}
	$$
	Let $\Delta$ denote the set of all vectors $\w_D$ as $D$ varies among all $y$--components, for all vertices $y$ dominated by, but not in, $X$.
	
	Then
	the vector $\langle 1,1,\ldots,1\rangle $ is in $\Span_\Q(\Delta)$ if and only if $(X,C_0)$ is non-principal.
\end{lem}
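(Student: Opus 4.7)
The plan is to prove the equivalence by translating between linear combinations in $W$ and explicit identities in $\Out(A_\Gamma)$ among partial conjugations. The key technical ingredients are Lemma~\ref{lem:dominating multiplier in pc}, which gives $D \setminus \st(x) = \bigsqcup_{i\,:\,C_i \subset D} C_i$ whenever $x \in X$ and $D$ is a $y$--component with $y \leq X$; the fact in $\Aut(A_\Gamma)$ that partial conjugations sharing a multiplier and having disjoint supports commute; and the identity $\pi^x_{C_0} \pi^x_{C_1} \cdots \pi^x_{C_r} = \ad_x$, which becomes trivial in $\Out$.

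For the implication ``$\langle 1, \ldots, 1\rangle \in \Span_\Q(\Delta) \Rightarrow (X, C_0)$ is non-principal'', I would clear denominators to obtain integers $n_D$ and $n > 0$ with $\sum_D n_D \w_D = n\langle 1, \ldots, 1\rangle$. Fixing any $x \in X$, each $\pi^x_{D \setminus \st(x)}$ lies in $P^{[y_D]}_D$, and the product $\prod_D (\pi^x_{D \setminus \st(x)})^{n_D}$ decomposes in $\Aut(A_\Gamma)$ as $\prod_{i=0}^r (\pi^x_{C_i})^{M_i}$ with $M_i = \sum_D n_D \mathbf{1}_{C_i \subset D}$. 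By construction $M_i = n$ for all $i \geq 1$; applying $\pi^x_{C_0} \cdots \pi^x_{C_r} \equiv 1$ in $\Out$ rewrites this product as $(\pi^x_{C_0})^{M_0 - n}$. If $M_0 \neq n$, this exhibits a nonzero power of $\pi^x_{C_0}$ in $\langle P^Y_D : Y \leq X,\ Y \neq X \rangle$. If $M_0 = n$, I would perform a complement swap: replace some $D$ with $C_0 \subset D$ by the union $D^c$ of the remaining $y_D$--components; the identity $\w_D + \w_{D^c}$ differs from $\langle 1, \ldots, 1\rangle$ by at most one standard basis vector, permitting one to re-balance the combination and change $M_0$ without altering $M_i$ for $i \geq 1$.

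For the converse, suppose $(\pi^x_{C_0})^n = \prod_\alpha (\pi^{x'_\alpha}_{D'_\alpha})^{\varepsilon_\alpha}$ in $\Out(A_\Gamma)$, with each $x'_\alpha \geq Y_\alpha$, $Y_\alpha \leq X$, $Y_\alpha \neq X$, and $D'_\alpha = D_\alpha \setminus \st(x'_\alpha)$. The aim is to produce a homomorphism $\Psi$ from the subgroup of $\Out(A_\Gamma)$ containing these generators into $W$ with $\Psi(\pi^x_{C_0}) = -\langle 1, \ldots, 1\rangle$ and $\Psi(\pi^{x'_\alpha}_{D'_\alpha}) = \pm \w_{D_\alpha}$. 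Applying $\Psi$ to the defining equation immediately yields $-n\langle 1, \ldots, 1\rangle = \sum_\alpha \pm \varepsilon_\alpha \w_{D_\alpha}$, so $\langle 1, \ldots, 1\rangle \in \Span_\Q(\Delta)$.

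Constructing $\Psi$ is the main obstacle: the map must respect all relations among partial conjugations in $\Out$, in particular $\prod_C \pi^v_C \equiv 1$ for each multiplier $v$. My plan is to build $\Psi$ as a composition of a restriction/projection map (verified via the criteria of Section~\ref{sec:prelim:restriction projection}) from $\Out(A_\Gamma)$ onto $\Out(G)$, where $G = \Z^{c_0} * \cdots * \Z^{c_s} * \Z^d$ is arranged so that $X$ corresponds to the $\cal{Y}$--factor and each $C_i$ to a $Z_i$--factor, followed by the homological representation $\rho_\pi$ of Section~\ref{sec:VI homo rep}. Lemma~\ref{lem:inner product} then supplies the transvection formulas $\rho_\pi(\pi^{x'}_{D'})(\v) = \v \pm 2 \langle \v, \w_D \rangle \mathbf{e}$ for an appropriate basis direction $\mathbf{e}$, so reading off the $\x$--coefficient of $\rho_\pi(\phi)(\z_j)$ recovers the required scalars, while $\rho_\pi(\pi^x_{C_0})(\z_j) = \z_j - 2\x$ encodes the $-\langle 1, \ldots, 1\rangle$ contribution.
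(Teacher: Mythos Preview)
Your forward direction (span $\Rightarrow$ non-principal) follows the paper's approach, and you correctly flag a subtlety the paper glosses over: the product you assemble equals $(\pi^x_{C_0})^{M_0-n}$ in $\Out$, which is useless when $M_0=n$. However, your complement-swap repair does not work. Consider a graph in which the only vertex $y$ with $y\leq X$ and $y\notin X$ lies in $\lk(x)$ and has a single $y$--component $D$ (for instance: vertices $x,y,a,a',b,b',c,c'$ and edges $xy,\,xa,\,xa',\,aa',\,ab,\,bb',\,a'c,\,cc'$). Then $\w_D=\langle 1,\ldots,1\rangle$, so the span condition holds, yet $\pi^x_{D\setminus\st(x)}=\ad_x$ is trivial in $\Out$ and there are no ``remaining $y_D$--components'' to swap to, so your recipe yields nothing and $(X,C_0)$ is in fact principal. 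The paper's proof has the same soft spot at this step; its later application of the lemma (see the construction of the set $\Pi$ just before Lemma~\ref{lem:span 1}) explicitly imposes $C_0\not\subset D$ when forming the vector set, and with that restriction both directions go through cleanly.

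Your converse (non-principal $\Rightarrow$ span) is far heavier than necessary. The simplification you miss is that Definition~\ref{def:virtuall obtained from dominated components} already forces every multiplier in the product expressing $(\pi^x_{C_0})^n$ to equal $x$; by starting instead from formulation~\eqref{eq:principal} with arbitrary multipliers $x'_\alpha\geq Y_\alpha$, you manufacture a difficulty that is not there. With all multipliers equal to $x$, the paper's argument is elementary: these partial conjugations commute, one performs the complement swap in $\Out$ to arrange $C_0\not\subset D_i$, and then evaluating both sides on a vertex of each $C_j$ (for $j\geq 1$) reads off the exponent $\alpha_j=\sum_{i:C_j\subset D_i}\varepsilon_i$. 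Equality in $\Out$ forces all the $\alpha_j$ to coincide, so $\sum_i\varepsilon_i\w_{D_i}$ is a nonzero multiple of $\langle 1,\ldots,1\rangle$. There is no need for restriction or projection maps, nor for the homological representation $\rho_\pi$: that machinery is assembled in Sections~\ref{sec:VI homo rep}--\ref{sec:virt ind} for the much harder Theorem~\ref{thm:no T special}, under hypotheses on $X$ and $\Gamma$ that are not assumed here.
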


\begin{proof}
	Consider a product of partial conjugations with multiplier $x \in X$ in the sets $P^{Y}_{D}$, for equivalence classes $Y$ that are dominated by, but not equal to, $X$, and $Y$--components $D$.
	We write the product as
	$$
	(\pi^x_{D_1})^{\varepsilon_1} \cdots (\pi^x_{D_l})^{\varepsilon_l}
	$$
	where each $D_i$ is equal to a $Y$--component with $\st(x)$ removed, and $\varepsilon_i = \pm 1$.
	Up to an inner automorphism, and flipping the sign of $\varepsilon_i$, we may assume $C_0 \not\subset D_i$, and we then take the vector $\w_{D_i}\in\Delta$ defined above.
	
	The vector $ \w =  \sum \varepsilon_i \w_{D_i}$ records the action of $x$ on the vertices of each component $C_j$ in the following way.
	For $z\in C_1\cup\cdots\cup C_r$, we have 
	\begin{equation}\label{eq:product of pcs}
	(\pi^x_{D_1})^{\varepsilon_1} \cdots (\pi^x_{D_l})^{\varepsilon_l} (z)
	=
	x^{-\alpha_j} z x^{\alpha_j}
	\end{equation}
	where 
	$$\alpha_j 
	= \sum\limits_{\{ i \mid C_j \subset D_i\}} \varepsilon_i
	\ \
	\textrm{ and }
	\ \
	\langle \alpha_1,\ldots,\alpha_r \rangle = \sum\limits_{i=1}^l \varepsilon_i \w_{D_i}.$$
	If $(X,C_0)$ is not principal, then $(\pi^x_{C_0})^n$, for some integer $n$, can be written as a product as above.
	In particular, we would have $\alpha_i = n$ for each $i$, implying $\langle 1,1,\ldots,1\rangle$ is in the span of $\Delta$.
	
	On the other hand, if we can write $\langle n,n,\ldots, n\rangle $ as a $\Z$--linear combination of the vectors $\Delta$, then by the above we can translate that into a product of partial conjugations from sets $P^Y_D$, with $Y\leq X$ and $Y\ne X$, that equal $(\pi^x_{C_0})^n$.
\end{proof}

The following lemma explains why we refer to the pairs as principal or non-principal.
This is important in Section~\ref{sec:T} when we deal with graphs with no SIL.
The Torelli subgroup is the kernel of the standard representation, obtained by acting on the abelianisation of $A_\Gamma$.
For this lemma, all we need to know about the Torelli subgroup is that when there is no SIL it is abelian and is generated by the set of partial conjugations (see also Proposition~\ref{prop:Torelli}).

\begin{lem}\label{lem:virtual torelli}
	When $\Gamma$ contains no SIL, the union of the sets $P^X_C$ for which $(X,C)$ is principal generates a finite index subgroup of the Torelli subgroup $\IA_\Gamma$.
\end{lem}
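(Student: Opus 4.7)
By Proposition~\ref{prop:Torelli}, since $\Gamma$ has no SIL, $\IA_\Gamma$ is free abelian and generated by all partial conjugations. Let $H := \langle \bigcup_{(X,C) \text{ principal}} P^X_C \rangle$. Because $\IA_\Gamma$ is finitely generated, the lemma reduces to showing $H \otimes \Q = \IA_\Gamma \otimes \Q$, i.e., that every partial conjugation $\pi^y_D$ has a non-zero power in $H$. The plan is to argue by induction on the height of $Y := [y]$ in the poset of equivalence classes ordered by the Servatius domination relation.

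The linchpin of the argument is the claim that $H$ is normal in $\Sout^0(A_\Gamma)$. Since $\Sout^0(A_\Gamma)$ is generated by transvections and partial conjugations, and the no-SIL hypothesis ensures that all partial conjugations commute in $\Out(A_\Gamma)$, conjugation by a partial conjugation preserves $H$ trivially. For conjugation by a transvection $R^y_x$ (with $x \le y$), the no-SIL hypothesis rules out case~(ii) of Lemma~\ref{lem:conjugates of pc by transv}; case~(i) gives commuting whenever the multiplier differs from $x$, and case~(iii) gives $R^y_x \pi^x_C (R^y_x)^{-1} = \pi^x_C \cdot \pi^y_{C\setminus\st(y)}$. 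When $\pi^x_C$ is a generator of $H$ coming from a principal pair $(X, C)$ with $x \in X$, the extra factor $\pi^y_{C\setminus\st(y)}$ also lies in $P^X_C \subseteq H$ because $y \ge x \ge X$, so the conjugate remains in $H$.

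With normality in hand, the induction is clean. For the base case with $Y$ domination-minimal: if $(Y,E)$ is principal then $\pi^y_E \in P^Y_E \subseteq H$; otherwise, equation~(\ref{eq:principal}) degenerates to $(\pi^y_E)^n = 1$ in $\Out(A_\Gamma)$, and since non-trivial partial conjugations have infinite order in $\Out(A_\Gamma)$, the element $\pi^y_E$ is already the identity. For the inductive step, assume the claim holds for all classes of strictly smaller height, and suppose $(Y, E)$ is non-principal. Equation~(\ref{eq:principal}) together with the direct sum decomposition $\IA_\Gamma = \bigoplus_{Y'} L_{Y'}$ (valid because the only relations in $\IA_\Gamma$ are $\sum_C \pi^y_C = 0$, one per multiplier) yields $n\pi^y_E = \sum_i \varepsilon_i \pi^y_{D_i \setminus \st(y)}$ for some $n \ne 0$, with each $D_i$ a $z_i$-component satisfying $z_i < y$ and $z_i \notin Y$. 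The inductive hypothesis gives $\pi^{z_i}_{D_i} \in H \otimes \Q$; the formula $R^y_{z_i} \pi^{z_i}_{D_i} (R^y_{z_i})^{-1} = \pi^{z_i}_{D_i} \cdot \pi^y_{D_i \setminus \st(y)}$ from Lemma~\ref{lem:conjugates of pc by transv}(iii), combined with normality of $H$, then yields $\pi^y_{D_i \setminus \st(y)} \in H \otimes \Q$ for each $i$. Summing gives $n\pi^y_E \in H \otimes \Q$, and hence $\pi^y_E \in H \otimes \Q$, completing the induction.

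The main obstacle is establishing normality of $H$ in $\Sout^0(A_\Gamma)$, which is where the no-SIL hypothesis enters most crucially: it simultaneously kills case~(ii) of Lemma~\ref{lem:conjugates of pc by transv} and forces partial conjugations to commute, so that the only non-trivial action of $\Sout^0(A_\Gamma)$ on the generators of $H$ comes from the ``shifted'' formula of case~(iii), which, thanks to the definition of $P^X_C$ allowing multipliers $w \ge X$, neatly lands back inside $H$. Once normality is secured, the transvection formula is exactly the tool needed to bridge the gap between the natural inductive hypothesis (phrased for partial conjugations whose multiplier lies in a strictly lower class) and the contributions with multiplier in $Y$ that actually appear when expanding the non-principal defining relation.
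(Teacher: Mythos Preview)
Your proof is correct and considerably more detailed than the paper's. The paper's argument is essentially a two-line assertion that ``some power of every partial conjugation lies in $H$, by the definition of principal/non-principal pairs,'' together with the observation that $\IA_\Gamma$ is abelian and generated by partial conjugations. The paper does not spell out how the recursion closes: if $([y],D)$ is non-principal, the defining relation expresses $(\pi^y_D)^n$ via elements $\pi^y_{D_i\setminus\st(y)}\in P^{Y_i}_{D_i}$ with $Y_i<[y]$, but when $(Y_i,D_i)$ is itself non-principal the relation one has at hand lives at multiplier $y_i\in Y_i$, not at $y$. Your use of normality of $H$ together with the conjugation formula $R^y_{z_i}\pi^{z_i}_{D_i}(R^y_{z_i})^{-1}=\pi^{z_i}_{D_i}\cdot\pi^y_{D_i\setminus\st(y)}$ is exactly what is needed to transport the inductive information from $L_{z_i}$ up to $L_y$. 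Normality of each $A^X_C=\langle P^X_C\rangle$ is in fact stated (with a one-line proof) later in the paper, in Section~\ref{sec:T}, so you are borrowing that tool forward; this is legitimate, and it makes explicit what the paper's terse proof leaves implicit.

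One small gap in your normality argument: you write ``a generator of $H$ coming from a principal pair $(X,C)$ with $x\in X$,'' but a generator of $P^X_C$ can have multiplier $w$ with $[w]>X$ strictly. When conjugating such a $\pi^w_{C\setminus\st(w)}$ by $R^y_w$, the extra factor is $\pi^y_{(C\setminus\st(w))\setminus\st(y)}$, and you need this to equal $\pi^y_{C\setminus\st(y)}\in P^X_C$. This holds because the no-SIL hypothesis forces $w$ and $y$ to be adjacent: apply Lemma~\ref{lem:domination pairs} with any $x_0\in X$, noting $x_0\le w,y$ and $x_0\ne w,y$, to get $[w,y]=1$, whence $\st(w)\subseteq\st(y)$. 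With that patch the argument is complete.
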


\begin{proof}
	Seeing that these sets generate a finite index subgroup is done by first noting that some power of every partial conjugation $\pi^y_D$ is the subgroup generated by the sets $P^X_C$ with $(X,C)$ principal.
	This follows from the definition of principal/non-principal pairs.
	Together with the further knowledge that $\IA_\Gamma$ is abelian and generated by the set of all partial conjugations, this proves the lemma.
	
	The fact that, when there is no SIL, the Torelli subgroup is generated by partial conjugations is well-known and follows from the generating set of Day \cite[\S 3]{DayPeakReduction} for $\IA_\Gamma$. 
	The generating set consists of all partial conjugations and commutator transvections $R_u^{[v,w]} = [R_u^v,R_u^w]$.
	For a commutator transvection to be non-trivial we require $u\leq v,w$, and all three vertices pair-wise non-adjacent.
	These conditions imply the existence of a SIL $(v,w\mids u)$ (see Lemma~\ref{lem:domination pairs}).
	
	The fact that the Torelli subgroup is abelian follows from the fact that partial conjugations commute when there is no SIL, see for example \cite[Theorem 1.4]{GPR_automorphisms}.
\end{proof}

\section{Virtual indicability from partial conjugations}\label{sec:virt ind}

We begin by generalising condition~\ref{B}, defining the following property of a graph $\Gamma$.
Note that Lemma~\ref{lem:B' simply} below gives an alternate definition with as much jargon removed as possible.

\begin{enumerate}
	\renewcommand{\theenumi}{\normalfont (A\arabic{enumi}$'$)}
	\renewcommand\labelenumi{\theenumi}
	\setcounter{enumi}{1}
	\item \label{B'} if an equivalence class $X$ admits an $X$--component $C$ for which $(X,C)$ is principal, then  $|X| > 1$.
\end{enumerate}

The aim of this section is to prove Theorem~\ref{thm:no T special}, that if $\Gamma$ fails condition~\ref{B'} then $\Aut(A_\Gamma)$ is virtually indicable.

Recall, $([x],C)$ being principal means that
$\pi^x_C$ cannot be virtually obtained from dominated non-equivalent components.
That is, no non-trivial power of $\pi^x_C$ can be expressed as a product of partial conjugations $\pi^x_D$, where $D$ is a $y$--component for some $y$ dominated by, but not equivalent to, $x$ (see Definition~\ref{def:virtuall obtained from dominated components}). 
The product takes place in $\Out(A_\Gamma)$.

Condition \ref{B} of Aramayona--Mart\'inez-P\'erez \cite{Aramayona-MartinezPerez} implies that the minimal star-separating equivalence classes have size greater than one. 
These will of course be principal, thus, as mentioned in the introduction, \ref{B'} implies \ref{B}.

We remark that the definition of \ref{B'} is a peculiar mix of graphical and algebraic conditions.
We can ask whether it is possible to express the condition in purely graphical language.
However, we feel somewhat pessimistic about hopes to answer this positively.
Indeed, Example~\ref{ex:nonobv-nonprincipal} gives a situation where some na\"ive criterion to deny principality---either $C$ is a $y$--component  for some $y \leq X$, $y\notin X$, or the complement of $C$ in $\Gamma\setminus (\st(x) \cup X)$  can be expressed as a disjoint union of such components---may apply.
Instead, we suspect that using something like Lemma~\ref{lem:span principal} is as close as we may get, replacing the group setting with linear algebra.

The following is the reformulation of condition~\ref{B'} given in the introduction, which removes the language of equivalence classes and principality.

\begin{lem}\label{lem:B' simply}
	A graph $\Gamma$ satisfies \ref{B'} if and only if for every vertex $x$ in $\Gamma$ and every $x$--component $C$, 
	the partial conjugation $\pi^x_C$ is virtually obtained from dominated components.
\end{lem}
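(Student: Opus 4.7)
The plan is to prove the two directions of the biconditional separately. The reverse direction turns out to be essentially tautological once one notices what the size-one case forces, and the main work is in the forward direction, where a case analysis on the equivalence class $X=[x]$ is needed. The one genuinely tricky configuration, and the main obstacle, arises when $X$ is a free equivalence class and the $x$-component in question is the singleton of another element of $X$.

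For the reverse direction, I would argue by contrapositive. Suppose $X=\{x\}$ is a singleton class admitting an $X$-component $C$ with $(X,C)$ principal, so $\pi^x_C$ is not virtually obtained from dominated non-equivalent components. Because $|X|=1$, every vertex $y$ satisfying $y\leq x$ and $y\neq x$ is automatically non-equivalent to $x$, so the two notions of "dominated" and "dominated non-equivalent" coincide. Hence $\pi^x_C$ is also not virtually obtained from dominated components, which contradicts the reformulation. This shows the reformulation forces \ref{B'}.

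For the forward direction, fix $x$, an $x$-component $C$, and set $X=[x]$. If $|X|=1$ the same coincidence between the two notions of domination turns the assumption that $(X,C)$ is not principal (given by \ref{B'}) directly into what is required. If $|X|\geq 2$, pick $x'\in X\setminus\{x\}$, which satisfies $x'\leq x$ and $x'\neq x$. In the abelian case $\st(x)=\st(x')$, so $C$ is already an $x'$-component and $\pi^x_C$ stands as a one-term product of the required form. In the free case one still has $\lk(x)=\lk(x')$, and the only vertices distinguishing $\Gamma\setminus\st(x)$ from $\Gamma\setminus\st(x')$ are elements of $X$, each forming an isolated component; so if $C\cap X=\emptyset$, then $C$ remains a component of $\Gamma\setminus\st(x')$ and the same one-term expression works.

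The obstacle is the remaining subcase: $X$ is free, $|X|\geq 2$, and $C=\{x''\}$ for some $x''\in X\setminus\{x\}$. Here $\{x''\}$ is not itself an $x''$-component, so no direct substitution of multipliers works. I would resolve this by exploiting that partial conjugations sharing a multiplier commute, so that the full conjugation decomposes as
\[
\ad_x \;=\; \pi^x_{\Gamma\setminus\st(x)} \;=\; \pi^x_C \cdot \prod_{C'\neq C}\pi^x_{C'},
\]
the product being taken over all other $x$-components. Passing to $\Out(A_\Gamma)$, where $\ad_x$ becomes trivial, yields $\pi^x_C = \prod_{C'\neq C}(\pi^x_{C'})^{-1}$. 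Every remaining component $C'$ is either disjoint from $X$ or is a singleton $\{x'''\}$ with $x'''\in X\setminus\{x,x''\}$, and in both situations the argument of the previous paragraph, now applied with $x''$ in the role of $x'$, identifies $C'$ with an $x''$-component. Since $x''\leq x$ and $x''\neq x$, this expresses $\pi^x_C$ in the required form and completes the forward implication.
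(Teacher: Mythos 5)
Your proof is correct and takes essentially the same approach as the paper: both handle the easy direction by noting that for a singleton class the two notions of domination coincide, and both resolve the only genuinely tricky subcase (a free class of size at least two with $C=\{x''\}$ for some $x''\sim x$) by using the relation $\ad_x=1$ in $\Out(A_\Gamma)$ to write $\pi^x_C$ as $\prod_{C'\ne C}(\pi^x_{C'})^{-1}$, where each $C'$ is recognised as an $x''$--component. The paper organises its argument as a single contrapositive and is terser, whereas you split explicitly into cases, but the underlying content is identical.
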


\begin{proof}
	We prove the contrapositive statement.
	We have failure of \ref{B'} if and only if there is some equivalence class $X$ of size one and an $X$--component $C$ so that $(X,C)$ is principal.
	That is to say, $X=\{x\}$, and 
	$\pi^x_C$ is not virtually obtained from dominated non-equivalent components.
	Since no vertex is equivalent to $x$, this gives the ``if'' direction.
	
	Now suppose we have $x$ and an $x$--component $C$ so that $\pi^x_C$ is not virtually obtained from dominated components.
	We want to show that $[x]=\{x\}$ and $([x],C)$ is principal, so \ref{B'} fails.
	The latter follows immediately once we have shown the former.
	So suppose $x \sim x'$.
	Then we necessarily have $x' \in C$ since otherwise $C$ would be an $x'$--component, contradicting the hypothesis on $\pi^x_C$.
	Since $C$ is an $x$--component, we must furthermore have $C = \{x'\}$ and $[x] = \{x,x'\}$.
	Suppose $C_1,\ldots, C_r$ are the remaining $x$--components.
	Then each $C_i$ is also an $x'$--component
	and
	$(\pi^x_C)^{-1} = \pi^x_{C_1} \cdots \pi^x_{C_r}$
	again contradicts our assumption on $\pi^x_C$.
	Hence no vertex is equivalent to $x$, and the lemma follows.
\end{proof}

\subsection{Outline of proof}

The objective is to prove Theorem~\ref{thm:no T special}, asserting that if condition~\ref{B'} fails, then $\Aut(A_\Gamma)$ is virtually indicable.
Thus we assume \ref{B'} fails, 
so there must be a vertex $x$, with $[x]=\{x\}$, and an $x$--component $C$ so that $(X,C)$ is principal.
We choose a domination-minimal such $x$ and will construct a homomorphism from a finite index subgroup of $\Out(A_\Gamma)$ onto $\Z$.
The process to construct such a virtual map onto $\Z$  involves refining the graph $\Gamma$ through projection and restriction maps.
We note that the domination relation that we will be referring to throughout always refers to domination in $\Gamma$.

\begin{step}
	Decluttering $\Gamma$.
\end{step}

\noindent We aim to exploit the partial conjugation $\pi^x_{C_0}$ to obtain a surjection onto $\Z$. The $x$--components therefore play a crucial role.
The first step, completed in Section~\ref{sec:decluttering}, is to remove much of $\Gamma$ by using a projection map,
but being sure to leave something in each $x$--component for the partial conjugations by $x$ to act on.

If $C=C_0,C_1,\ldots,C_r$ are the $x$--components, we explain in Definition~\ref{def:hat Lambda} how to choose a subset $Z_i$ of $C_i$, for each $i$, to create a subgraph $\hat{\Lambda}$ of $\Gamma$ that admits a projection map
$$
p_1 \colon \Sout^0(A_\Gamma) \to \Out(A_{\hat{\Lambda}}).$$
The construction of $\hat{\Lambda}$ ensures it contains $x$ and all vertices dominated by $x$.

The graph $\hat{\Lambda}$ is disconnected.
In Lemma~\ref{lem:abelianise Lambda hat} we verify that we can abelianise each component to obtain a graph $\Lambda_0$, 
so that $A_{\Lambda_0}$ is a free product of free abelian groups,
and which furthermore admits a homomorphism from the image of $p_1$ to $\Out(A_{\Lambda_0})$. 
Composing this with $p_1$ we get a homomorphism
$$
p \colon \Sout^0(A_\Gamma) \to \Out(A_{\Lambda_0}).$$

\begin{step}
	Dividing into cases.
\end{step}

\noindent The arrangement of the subsets $Z_i$ with regards to $x$ and the vertices it dominates can cause different issues to arise. 
We use these to split into two cases in Section~\ref{sec:virt ind cases}.
In the first case we ultimately yield a homomorphism 
$$
q \colon \Sout^0(A_\Gamma) \to \Out(\Z^{c_0} \ast \Z^{c_1} \ast \Z)$$
for some positive integers $c_0$ and $c_1$.
In the second case we get a similar map, but we may have more than three free factors in the target group.
We get
$$
q \colon \Sout^0(A_\Gamma) \to \Out(\Z^{c_0} \ast \Z^{c_1}\ast \cdots \Z^{c_s} \ast \Z^d)$$
for positive integers $c_0,\ldots, c_r,d$.

\begin{step}
	Employing the homological representation of Section~\ref{sec:VI homo rep}.
\end{step}

\noindent Readers familiar with homological representations will appreciate that  in the first  case, 
if the image of $q$ in $\Out(\Z^{c_0} \ast \Z^{c_1} \ast \Z)$ is sufficiently rich (even just containing a non-trivial partial conjugation), 
then obtaining a virtual surjection onto $\Z$ is entirely plausible.
In fact, if there are enough partial conjugations in the image, we can even obtain largeness of $\Out(A_\Gamma)$ (see Remark~\ref{rem:largeness}).

The second case though is more subtle.
The homological representation we use initially yields an action on a vector space of dimension $d+s$.
But by studying the action of partial conjugations and transvections on this space
we are able to restrict to a subspace $V$ so that the image of a finite-index subgroup of $\Aut(A_\Gamma)$ in $\PGL(V)$ is free abelian.

\subsection{Projecting to a simpler RAAG}\label{sec:decluttering}

In this section we do some spring cleaning on $\Gamma$, removing as much as we can while leaving enough so that the partial conjugations with multiplier $x$ still act non-trivially.
To do this, we pick out a minimal equivalence class in each $x$--component to keep, and kill the rest of the component through a projection map, as described in Section~\ref{sec:prelim:restriction projection}.

\begin{defn}\label{def:hat Lambda}
	The following describes how to construct the graph $\hat{\Lambda}$.	
	\begin{itemize}
		\item Let $C=C_0,C_1,\ldots,C_r$ be the $x$--components.
		If $C_i$ consists of one vertex, set $Z_i = C_i$.
		Otherwise, let $Z_i$ be a domination-minimal equivalence class in $C_i$.
		
		\item 	Let $Y$ be the set of vertices in $\st(x)$ that are dominated by $x$.
		
		\item 	Define $\hat{\Lambda}$ to be the induced subgraph of $\Lambda$ with vertex set
		$Z_0 \cup \cdots \cup Z_r \cup Y \cup \{x\}$.
	\end{itemize}
\end{defn}

To see that the sets $Z_i$ are well-defined, we observe that if $C_i$ contains more than one vertex, then the equivalence class of each vertex in $C_i$ is contained in $C_i$.
Indeed, if $z \in C_i$ then $|C_i|>1$ implies $\lk(z)\cap C_i$ is nonempty.
Suppose $z'$ is equivalent to $z$.
Firstly, this implies $z'$ cannot be in $\st(x)$.
Next, if $z'$ is adjacent to $z$ then $z'$ is immediately in $C_i$.
Otherwise $\lk(z) = \lk(z')$, so the fact that $\lk(z)\cap C_i$ is nonempty gives a two-edge path in $C_i$ connecting $z$ to $z'$.

This observation also implies that a set $Z_i$ can fail to be a complete equivalence class only if $Z_i = C_i$ consists of one vertex that is dominated by $x$ (this is addressed again in Lemma~\ref{lem:free Zi}).

\begin{lem}\label{lem:abelianise Lambda hat}
	If $v\in \hat{\Lambda}$ and $u\leq v$, then  $u\in \hat{\Lambda}$. 
	Hence the projection map
	$$
	p_1\colon \Sout^0(A_\Gamma) \to \Out^0(A_{\hat{\Lambda}})$$
	is well-defined.
\end{lem}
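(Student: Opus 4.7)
The plan is to establish the downward-closure of $\hat{\Lambda}$ under domination by a case analysis based on which part of $V(\hat{\Lambda}) = Z_0\cup\cdots\cup Z_r \cup Y \cup \{x\}$ contains $v$. The well-definedness of $p_1$ will then be a formal consequence, since among the Laurence--Servatius generators of $\Sout^0(A_\Gamma)$ only the transvections $R_u^w$ require the hypothesis ``$w\in\hat{\Lambda}\Rightarrow u\in\hat{\Lambda}$'' from the criterion recalled in Section~\ref{sec:prelim:restriction projection}, and this is precisely what we aim to verify.

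For the case $v=x$, suppose $u\leq x$ with $u\neq x$. If $u\in\st(x)$, then by definition $u\in Y$. Otherwise $u$ lies in some $x$-component $C_i$; since $u\leq x$ means $\lk(u)\subseteq\st(x)$, while $u\in C_i$ forces $u\notin\st(x)$, any neighbour of $u$ in $C_i$ would have to lie in $\st(x)\cap C_i=\emptyset$. So $u$ is isolated in $C_i$, giving $|C_i|=1$ and $u=Z_i$. The case $v\in Y$ reduces to the previous one by transitivity ($u\leq v\leq x$).

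The case $v\in Z_i$ splits according to $|C_i|$. When $|C_i|=1$, the single vertex $v=C_i=Z_i$ must itself satisfy $v\leq x$: any neighbour of $v$ in $\Gamma\setminus\st(x)$ would have to lie in its own component $C_i$, but $|C_i|=1$ leaves no room, so $\lk(v)\subseteq\st(x)$. This reduces to the first case. When $|C_i|>1$, the set $Z_i$ is a domination-minimal equivalence class contained in $C_i$ (using the observation that if $|C_i|>1$ then $[z]\subseteq C_i$ for every $z\in C_i$). If $u\in C_i$, minimality of $Z_i$ forces $u\in Z_i$; if $u\in C_j$ for $j\neq i$, an argument with $\lk(u)\subseteq\lk(v)\subseteq C_i\cup\st(x)$ combined with $\lk(u)\subseteq C_j\cup\st(x)$ forces $\lk(u)\subseteq\st(x)$, hence $u\leq x$, and then the same singleton-component argument as before gives $|C_j|=1$ and $u=Z_j$.

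The main obstacle, and the only subtle point, is ruling out $u\in\st(x)\setminus\{x\}$ when $v\in Z_i$ with $|C_i|>1$: the concern is that such a $u$ could witness $u\leq v$ without being dominated by $x$. The key observation, to be executed carefully, is that $u\in\lk(x)$ forces $x\in\lk(u)$, and whichever clause of Remark~\ref{rem:domination} governs $u\leq v$ (adjacent or non-adjacent), the inclusion $\lk(u)\subseteq\lk(v)$ or $\st(u)\subseteq\st(v)$ propagates $x$ into $\st(v)$, contradicting $v\in C_i\subseteq\Gamma\setminus\st(x)$. This eliminates the problematic configuration and completes the downward-closure. Finally, since every transvection $R_u^w\in\Sout^0(A_\Gamma)$ requires $u\leq w$, the closure property verifies the hypothesis on $X$ from Section~\ref{sec:prelim:restriction projection}, so $p_1$ is well-defined.
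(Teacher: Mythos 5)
Your proof is correct and follows essentially the same case analysis as the paper's: reduce the cases $v\in Y\cup\{x\}$ to the observation that $u\leq x$ forces $u\in Y$ or $\{u\}=Z_j$, and in the case $v\in Z_i$ rule out $u\in\st(x)$ by noting $x\in\lk(u)\subseteq\st(v)$ contradicts $v\in C_i$, then handle the remaining locations of $u$ via minimality of $Z_i$ or the separation by $\st(x)$. The only cosmetic difference is that you route the $u\in\st(x)\setminus\{x\}$ subcase through Remark~\ref{rem:domination}, where the raw definition $\lk(u)\subseteq\st(v)$ already does the job directly, and you split the $v\in Z_i$ case into $|C_i|=1$ versus $|C_i|>1$, which the paper handles more uniformly; neither change affects correctness.
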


\begin{proof}
	First suppose $v\in Y\cup \{x\}$.
	Then $u\leq x$.
	Either $u\in Y$ or $\lk(u) \subset \st(x)$, which implies $u$ forms its own $x$--component so there is some $j$ so that $\{u\} = C_j = Z_j$.
	In particular, $u\in \hat{\Lambda}$.
	
	Now suppose $v \in Z_i$ and $u \notin Z_i$.
	We must then have that $u$ is not in $C_i$, and must also be in a different connected component $C_j$ of $\Gamma\setminus\st(x)$, since the only other alternative would be for $u$ to be in $\st(x)$, which implies $x\in \lk(u)\subseteq \st(v)$, contradicting $v\in C_i$.
	Then, as $u$ and $v$ are separated by $\st(x)$, and $\lk(u)\subseteq\st(v)$, we yield $\lk(u)\subseteq \st(x)$.
	Thus, $u$ forms its own $x$--component, so as above $u\in \hat{\Lambda}$.
\end{proof}

The structure of $A_{\hat{\Lambda}}$ as a group is as follows.
The subgroup generated by $Y$ could be any RAAG, denote it $A_Y$.
The $Z_i$ that are equivalence classes could be free or free abelian. 
They cannot be adjacent to any $y\in Y$ or $x$, since the link of $y$ is contained in the star of $x$.
Thus the group generated by $\hat{\Lambda}$ has the form
$$A_{\hat{\Lambda}} = G_0 \ast G_1  \ast \cdots  \ast G_r \ast (A_Y \times \Z)$$
where each $G_i$ is either $\Z^{c_i}$ or $F_{c_i}$ according to whether $Z_i$ is abelian or not, and where $c_i=|Z_i|$.
The final $\Z$ factor has generator $x$.

Following this projection $p_1$ we compose it with another homomorphism obtained by
abelianising each free factor in the above decomposition of $A_{\hat{\Lambda}}$.

\begin{defn}
	Let ${\Lambda_0}$ be the graph obtained from $\hat{\Lambda}$ by adding edges so that $A_Y$ and each equivalence class $Z_i$ become abelian.
	The structure of $A_{\Lambda_0}$ is therefore
	$$A_{{\Lambda_0}} = \Z^{c_0} \ast \Z^{c_1} \ast \cdots \ast \Z^{c_r} \ast \Z^{k+1}$$
	where $k = |Y|$.
\end{defn}

\begin{lem}
	The projection $p_1$ composes with a homomorphism $\im p_1 \to \Out(A_{\Lambda_0})$ to give a homomorphism
	$$p \colon \Sout^0(A_\Gamma) \to \Out(A_{\Lambda_0}).$$
\end{lem}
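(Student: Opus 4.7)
The plan is to define $p$ by composing $p_1$ with the map $\im p_1 \to \Out(A_{\Lambda_0})$ induced by the quotient $\kappa'\colon A_{\hat{\Lambda}}\to A_{\Lambda_0}$ that abelianises each free factor $G_i$ (for $Z_i$ free) and the subgroup $A_Y$. Concretely, I will check that every Laurence--Servatius generator $\varphi$ of $\Sout^0(A_\Gamma)$ has the property that a representative of $p_1(\varphi)\in \Aut(A_{\hat{\Lambda}})$ preserves $\ker\kappa'$, which is the normal closure of commutators $[u,w]$ with $u,w$ in a common abelianised free factor of $A_{\hat{\Lambda}}=G_0\ast\cdots\ast G_r\ast (A_Y\times\Z)$. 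It then follows that $\varphi$ induces a well-defined automorphism of $A_{\Lambda_0}$, giving the required homomorphism.

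For partial conjugations $\pi^a_D$, if $a\notin\hat{\Lambda}$ then $p_1(\pi^a_D)$ is trivial. If $a\in\hat{\Lambda}$, it acts on $A_{\hat{\Lambda}}$ as conjugation by $a$ on $D\cap\hat{\Lambda}$ and as the identity elsewhere, so the only potential obstruction is a commutator $[b,c]\in\ker\kappa'$ with exactly one of $b,c$ in $D\cap\hat{\Lambda}$. A case analysis on the factor containing $b,c$ and the location of $a$ rules this out. If $a\in Z_j$ then $a\notin\st(x)$ gives $x\notin\st(a)$, so any two vertices of $Y$ not in $\st(a)$ are joined by the path through $x$ inside $\Gamma\setminus\st(a)$, forcing them into a common $a$-component. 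If $b,c\in Z_k$ free with $|Z_k|\geq 2$ split between $a$-components, then $\lk(b)=\lk(c)\subseteq\st(a)$ yields $b\leq a$; if moreover $a\notin Z_k$, chasing $b\leq a\leq x$ (forced by $a\in Y\cup\{x\}$, or by a link-chasing argument when $a\in Z_j$ with $j\ne k$) collapses $C_k$ to $\{b\}$ and contradicts $|Z_k|\geq 2$. In the remaining case $a\in Z_k$, the elements $a,b,c$ commute in $A_{\Lambda_0}$, so the induced map is trivial on that factor.

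For a transvection $R_u^v$, the image is trivial unless $u\in\hat{\Lambda}$, in which case Lemma~\ref{lem:abelianise Lambda hat} gives $v\in\hat{\Lambda}$. The key step, which I expect to be the main obstacle, is to show that whenever $u$ lies in an abelianised factor of rank at least two (that is, $u\in Y$ when $A_Y$ is non-abelian, or $u\in Z_k$ with $|Z_k|\geq 2$), the vertex $v$ lies in the same factor; otherwise $R_u^v$ would send $[u,b]\in\ker\kappa'$ to $[uv,b]$ which equals $[v,b]\ne 1$ in $A_{\Lambda_0}$. For $u\in Y$: $x\in\lk(u)$ and $u\leq v$ give $x\in\st(v)$, so any $v\in Z_j$ would require $x\in\lk(v)$, impossible since $v\in Z_j$ is not adjacent to $x$; hence $v\in Y\cup\{x\}$. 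For $u\in Z_k$ with $|Z_k|\geq 2$: any $z\in\lk(u)\cap C_k$ must also lie in $\lk(v)$, but if $v\in Z_j$ with $j\ne k$ then $z,v$ are in different $x$-components and cannot be adjacent, so $\lk(u)\subseteq\st(x)$ and $|Z_k|=1$, a contradiction; while $v\in Y\cup\{x\}$ would yield $u\leq x$ with the same conclusion. Hence $v\in Z_k$. In the leftover cases ($u\in Z_k$ with $|Z_k|=1$, or $u=x$) no commutator in $\ker\kappa'$ involves $u$, so descent is automatic and $R_u^v$ may descend as a Nielsen-type automorphism of the free product $A_{\Lambda_0}$.

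Having verified that every generator of $\Sout^0(A_\Gamma)$ descends to an automorphism of $A_{\Lambda_0}$, the assignment extends to a well-defined group homomorphism $\im p_1\to\Out(A_{\Lambda_0})$, and composition with $p_1$ produces $p$.
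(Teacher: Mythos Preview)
Your approach is the same as the paper's: verify that each transvection and partial conjugation in $\im p_1$ preserves the kernel of $\kappa'\colon A_{\hat\Lambda}\to A_{\Lambda_0}$, and hence descends. One case is missing from your partial-conjugation analysis, namely $b,c\in Y$ with multiplier $a\in Y\cup\{x\}$; but this is dispatched by the same observation you already use for $a\in Z_k$ --- all three of $a,b,c$ lie in the factor $Y\cup\{x\}$, so the image of $[b,c]$ under $\pi^a_D$ is a word in $a,b,c$ and hence trivial in $A_{\Lambda_0}$. (When $a=x$ it is even simpler: $Y\subseteq\st(x)$, so neither $b$ nor $c$ lies in $D$.) The paper's version is more compressed: rather than your granular case split it observes that for any partial conjugation $\pi^v_D$ each free factor of $A_{\hat\Lambda}$ is either contained in $D$, disjoint from $D$, or contains the multiplier $v$; and for transvections $R_u^v$, either $u$ and $v$ lie in the same free factor or the factor containing $u$ is cyclic.
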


\begin{proof}
	The homomorphism sends an outer automorphism $\Phi$ in $\im p_1$ to the automorphism defined by the action of $\Phi$ on the vertices of $\Lambda_0$.	
	To see that $\Phi$ is indeed sent to an automorphism, one just needs to verify that the kernel of the map $A_{\hat{\Lambda}} \to A_{\Lambda_0}$, which is normally generated by the commutators $[a,b]$ for $a,b\in Z_i$ and $i=0,1,\ldots, r$, is preserved by each generator in the image of $p_1$.
	Indeed, any transvection $R_u^v$ preserves the kernel as either $u$ and $v$ are in the same factor $G_i$, or otherwise the factor containing $u$ must be cyclic (since $\lk(u)$ must therefore be contained in $\st(x)$, regardless of what $v$ is).
	Meanwhile, a partial conjugation $\pi^v_D$ with $v\in \hat{\Lambda}$ preserves the kernel since each factor in $A_{\hat{\Lambda}}$ either is contained in $D$, does not intersect $D$,
	or both $D$ and $v$ are contained in $Z_i$.
\end{proof}

We finish this subsection with a collection of useful observations about the behaviour of the sets $Z_i$.
Remember, the domination relation that we refer to (and hence the equivalence classes of vertices) always comes from the relation in $\Gamma$.

\begin{lem}\label{lem:z_i leq z_j implies z_i leq x}
	Suppose $z_i\in Z_i$ and $z_j \in Z_j$ for some $i\ne j$, and $z_i \leq z_j$.
	Then $z_i \leq x$, and $Z_i$ is a $z_j$--component.
\end{lem}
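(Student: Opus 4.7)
The plan is to prove the two assertions in order: first that $z_i \leq x$, and then to use this to conclude that in fact $Z_i = \{z_i\}$, at which point the $z_j$-component claim is essentially immediate from $z_i \leq z_j$.

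For the first assertion I begin by observing that $z_i$ and $z_j$ cannot be adjacent: both lie outside $\st(x)$ since they live in $x$-components, so an edge between them would place them in the same connected component of $\Gamma \setminus \st(x)$, contradicting $i \neq j$. By Remark~\ref{rem:domination}, the non-adjacent domination $z_i \leq z_j$ yields $\lk(z_i) \subseteq \lk(z_j)$. For any $w \in \lk(z_i)$, adjacency to $z_i \in C_i$ forces $w \in C_i \cup \st(x)$, and simultaneously $w \in \lk(z_j)$ forces $w \in C_j \cup \st(x)$. Since $C_i \cap C_j = \emptyset$, this compels $w \in \st(x)$, and we conclude $\lk(z_i) \subseteq \st(x)$, i.e.\ $z_i \leq x$.

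For the second assertion, the key observation is that $z_i \leq x$ combined with $z_i \notin \st(x)$ (because $z_i$ lies in the $x$-component $C_i$) means $z_i$ has no neighbors in $\Gamma \setminus \st(x)$; hence its connected component there is just $\{z_i\}$, giving $C_i = \{z_i\}$. By the definition of $Z_i$ (which sets $Z_i = C_i$ whenever $|C_i| = 1$), we deduce $Z_i = \{z_i\}$. From $z_i \leq z_j$ we have $\lk(z_i) \subseteq \st(z_j)$, and $z_i \notin \st(z_j)$ follows from the non-adjacency established above together with $z_i \neq z_j$; so $\{z_i\}$ is itself a connected component of $\Gamma \setminus \st(z_j)$, i.e.\ a $z_j$-component. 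The only slightly non-routine step is recognizing how $z_i \leq x$ collapses $C_i$ to a singleton; once this is in hand, everything else is a direct unwinding of definitions.
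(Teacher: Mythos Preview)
Your proof is correct and follows the same line of reasoning as the paper's. The paper's proof is terser---it compresses your second paragraph into the single sentence ``$\lk(z_i) \subseteq \lk(z_j)$ implies that $Z_i$ is a $z_j$--component''---but the underlying argument (that $\lk(z_i)\subseteq\st(x)$ forces $C_i=\{z_i\}=Z_i$, whence $Z_i$ is an isolated vertex in $\Gamma\setminus\st(z_j)$) is identical.
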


\begin{proof}
	Any path from $z_i$ to $z_j$ must hit $\st(x)$. Hence $\lk(z_i)=\lk(z_i) \cap \lk(z_j) \subseteq \st(x)$, so $z_i \leq x$.
	Also, $\lk(z_i) \subseteq \lk(z_j)$ implies that $Z_i$ is a $z_j$--component.
\end{proof}

\begin{lem}\label{lem:free Zi}
	An equivalence class is free in $\Lambda_0$ if and only if it is free in $\Gamma$, is dominated by $x$ in $\Gamma$, and is outside $\st(x)$.
	
	In particular, $Z_i$ and $Z_j$, for $i\ne j$ are in the same equivalence class in $\Gamma$ only if $Z_i,Z_j \leq x$, and in which case their equivalence class is not abelian and is contained in $Z_0\cup\cdots \cup Z_r$.
\end{lem}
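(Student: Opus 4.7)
The plan is to describe $\Lambda_0$ as a disjoint union of cliques and then read off its equivalence classes using the structure of $\Gamma$ together with Lemma~\ref{lem:z_i leq z_j implies z_i leq x}.

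First I would verify that the connected components of $\Lambda_0$ are exactly the cliques $Y\cup\{x\}$ and $Z_0,\ldots,Z_r$. In $\hat{\Lambda}$, there are no edges between $Y\cup\{x\}\subseteq\st_\Gamma(x)$ and any $Z_i\subseteq\Gamma\setminus\st_\Gamma(x)$, nor between distinct $Z_i$ and $Z_j$ (which lie in distinct connected components of $\Gamma\setminus\st_\Gamma(x)$). Passing from $\hat{\Lambda}$ to $\Lambda_0$ only adds edges within each $Z_i$ and within $Y$, so no new inter-piece edges appear; meanwhile $x$ is already adjacent in $\hat{\Lambda}$ to every $y\in Y$, so $Y\cup\{x\}$ becomes a single clique in $\Lambda_0$.

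For the forward direction, I would take an equivalence class $E$ of $\Gamma$ whose vertices lie in $V(\Lambda_0)$ and are pairwise non-adjacent in $\Lambda_0$, and show $E\subseteq\bigcup_i Z_i$ with at most one vertex per singleton $Z_i$. First rule out $x\in E$: if $x\in E$ then $E\subseteq [x]_\Gamma=\{x\}$, so $|E|=1$ and $E$ is not free. Next rule out $y\in Y\cap E$: any other $e'\in E$ is $\Gamma$-equivalent to $y$, and since $y$ is adjacent to $x$ we have $x\in\lk_\Gamma(y)\subseteq\st_\Gamma(e')$, forcing $e'\in\st_\Gamma(x)$; but pairwise non-adjacency in $\Lambda_0$ forces $e'$ into a different clique of $\Lambda_0$ than $Y\cup\{x\}$, hence $e'\in Z_i$ for some $i$, contradicting $e'\in\st_\Gamma(x)$. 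So $E\subseteq\bigcup_i Z_i$, giving $E\cap\st_\Gamma(x)=\emptyset$. Pairwise non-adjacency in $\Lambda_0$ then forces at most one vertex per $Z_i$, so $E$ spans distinct $Z_i$'s; applying Lemma~\ref{lem:z_i leq z_j implies z_i leq x} to any such pair gives $E\leq x$ in $\Gamma$, and since distinct $x$-components yield non-adjacent vertices in $\Gamma$, $E$ is also a free (non-abelian) equivalence class in $\Gamma$.

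For the reverse direction, given a free equivalence class $E$ in $\Gamma$ with $E\leq x$ and $E\cap\st_\Gamma(x)=\emptyset$, each $e\in E$ satisfies $\lk_\Gamma(e)\subseteq\st_\Gamma(x)$ and $e\notin\st_\Gamma(x)$, so $\{e\}$ is a connected component of $\Gamma\setminus\st_\Gamma(x)$, forcing $\{e\}=Z_{i(e)}$ for some $i(e)$. Distinct elements of $E$ land in distinct singleton $Z_i$'s, hence in distinct cliques of $\Lambda_0$, so are pairwise non-adjacent there. The ``in particular'' part then drops out: if $Z_i$ and $Z_j$ ($i\ne j$) sit in a common $\Gamma$-equivalence class, Lemma~\ref{lem:z_i leq z_j implies z_i leq x} gives $Z_i,Z_j\leq x$; the class is non-abelian since $Z_i,Z_j$ lie in distinct $x$-components hence are non-adjacent in $\Gamma$; and no vertex of $Y\cup\{x\}$ can join such a class by the argument above (using $x\in\lk_\Gamma(y)$ for $y\in Y$, and $[x]_\Gamma=\{x\}$). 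The main obstacle is exactly this bookkeeping around $Y\cup\{x\}$ in the forward direction: the decisive observations are that $[x]_\Gamma=\{x\}$ and that every $y\in Y$ is adjacent to $x$ in $\Gamma$, which together force $\Gamma$-equivalent neighbours of such $y$ into $\st_\Gamma(x)$, incompatible with being in a $Z_i$.
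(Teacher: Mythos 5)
Your decomposition of $\Lambda_0$ into disjoint cliques, the appeal to Lemma~\ref{lem:z_i leq z_j implies z_i leq x}, and the observation that a dominated vertex outside $\st(x)$ must itself be a singleton $x$--component are exactly the ingredients of the paper's proof, so the overall approach matches.

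There is, however, a gap in your treatment of the ``in particular'' clause. The conclusion you need is that the common $\Gamma$--equivalence class $E$ of $Z_i$ and $Z_j$ is \emph{contained in} $Z_0\cup\cdots\cup Z_r$. You establish $Z_i,Z_j\leq x$, that $E$ is non-abelian, and that no vertex of $Y\cup\{x\}$ lies in $E$ --- but ruling out $Y\cup\{x\}$ is far from ruling out the rest of $\Gamma$: nothing you say excludes vertices of $\st(x)\setminus(Y\cup\{x\})$, nor vertices of $C_k\setminus Z_k$ for some $k$, and your ``argument above'' explicitly used the hypothesis that the candidate vertex lies in $V(\Lambda_0)$, which is not available here. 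The paper's proof closes this uniformly: any $z\in E$ satisfies $z\leq x$, and since $E$ is non-abelian $z$ is non-adjacent to $z_i$ so $\lk(z)=\lk(z_i)$; then $z\notin\st(x)$ (else $x\in\lk(z)=\lk(z_i)$, contradicting $z_i\notin\st(x)$), hence $\lk(z)\subseteq\st(x)$ together with $z\notin\st(x)$ forces $\{z\}$ to be a connected component of $\Gamma\setminus\st(x)$, i.e.\ $\{z\}=C_l=Z_l$ for some $l$. That is precisely your reverse-direction argument, but before you can invoke it you must first verify $E\cap\st(x)=\emptyset$ from scratch (not merely for $Y\cup\{x\}$), and then conclude $E\subseteq\bigcup_l Z_l$; as written, your final paragraph stops short of this.
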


\begin{proof}
The only way to have an equivalence class that remains free in $\Lambda_0$ is if it is the union of singleton sets $Z_i$.
Then Lemma~\ref{lem:z_i leq z_j implies z_i leq x} implies the class is dominated by $x$.

The last claim, that the equivalence class of $Z_i$ is contained in $Z_0\cup \cdots \cup Z_r$, is true  since each vertex $z$ of the equivalence class satisfies $\lk(z) \subset \st(x)$, and so forms an $x$--component on its own, and therefore must be one of the sets $Z_j$.
\end{proof}

\begin{lem}\label{lem:z cpnt is Ci}
	Let $z\in U = \{u\mid u\leq x, u\ne x\}$, $z\not\in\st(x)$, and $D$ be a $z$--component.
	Then either
	\begin{itemize}
		\item $D = C_i$ for some $i$, or
		\item $x\in D$.
	\end{itemize}
\end{lem}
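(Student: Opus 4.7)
The plan is a short case analysis starting from the dichotomy: either $x \in D$ (and we are done), or $x \notin D$ (in which case I want to conclude $D = C_i$ for some $i$). The work is entirely in the second case.

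First I would verify a key observation: the vertex $x$ itself lies in $\Gamma \setminus \st(z)$. Indeed, the hypotheses give $z \ne x$ and $z \notin \st(x)$, so $x$ and $z$ are distinct and non-adjacent, meaning $x \notin \st(z)$. Hence $x$ sits in some connected component of $\Gamma \setminus \st(z)$.

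Next, assuming $x \notin D$, I would show $D \cap \st(x) = \emptyset$. Suppose toward a contradiction that some $w \in D \cap \st(x)$ exists. Since $x \notin D$, we have $w \ne x$, so $w$ is adjacent to $x$. Both $w$ and $x$ lie in $\Gamma \setminus \st(z)$ (the former because $w \in D$, the latter by the observation above), and they are joined by an edge of $\Gamma$, so they lie in the same connected component of $\Gamma \setminus \st(z)$. That component is $D$, giving $x \in D$, a contradiction.

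Finally, with $D \cap \st(x) = \emptyset$ established, I would invoke Lemma~\ref{lem:dominating multiplier in pc} with the roles $z \leq x$: the set $D \setminus \st(x) = D$ is a (non-empty) union of $x$-components. Because $D$ is itself connected and the $x$-components $C_0, C_1, \ldots, C_r$ are the maximal connected subsets of $\Gamma \setminus \st(x)$, the union consists of a single $C_i$, so $D = C_i$. The main subtlety — really the only non-routine point — is recognizing that the hypothesis $z \notin \st(x)$ forces $x \notin \st(z)$, which is what allows the adjacency of $w$ and $x$ to pull $x$ into $D$ in the contradiction step.
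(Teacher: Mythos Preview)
Your proof is correct and follows essentially the same approach as the paper's. The only cosmetic difference is in the final step: the paper argues directly that $D\subseteq C_i$ (since $D$ is connected in $\Gamma\setminus\st(x)$) and then that $C_i\subseteq D$ (since $z\leq x$ forces $C_i\cap\st(z)=\emptyset$, so the connected set $C_i$ lies in a single $z$--component, namely $D$), whereas you package the same reasoning by invoking Lemma~\ref{lem:dominating multiplier in pc} and then using connectedness of $D$ to reduce the union to a single $C_i$.
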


\begin{proof}
	Suppose $x\notin D$, so $\st(z)$ separates $D$ and $x$.
	We claim this means that $D \subset \Gamma\setminus \st(x)$
	Indeed, otherwise $x$ would be adjacent to a vertex of $D$, and since $x\not\in\st(z)$ we would have $x\in D$.
	Thus, since $D$ is connected, it must be contained in an $x$--component $C_i$.
	Finally, since $z\leq x$ no vertex of $C_i$ is in $\st(z)$, and it therefore follows that $D=C_i$.
\end{proof}

The final two lemmas give us useful properties when we work under the assumption that condition~\ref{A} of \cite{Aramayona-MartinezPerez} holds.
When \ref{A} fails, Theorem~\ref{thms:AMP} from \cite{Aramayona-MartinezPerez} tells us that we get virtual indicability.

\begin{lem}\label{lem:Z0 not domianted by x}
	There is no $u\in U$ so that $C_0$ is a $u$--component.
	
	In particular, either $Z_0$ is not dominated by $x$, or condition~\ref{A} fails.
\end{lem}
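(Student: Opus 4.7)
The plan is to treat the two assertions separately. The first sentence will follow from a short contradiction argument exploiting principality of $(\{x\}, C_0)$, while the ``in particular'' clause will follow from it after a brief case analysis on $|C_0|$ together with a single invocation of condition~\ref{A}.

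For the first sentence, suppose for contradiction that some $u \in U$ has $C_0$ as a $u$-component. Setting $Y = [u]$, the hypotheses $u \leq x$, $u \neq x$, together with $X = [x] = \{x\}$, yield $Y \leq X$ and $Y \neq X$. Since $C_0$ is an $x$-component, $C_0 \cap \st(x) = \emptyset$, and because $x \geq Y$ one has $\pi^x_{C_0} = \pi^x_{C_0 \setminus \st(x)} \in P^Y_{C_0}$. Taking $n=1$ in Definition~\ref{def:virtuall obtained from dominated components} realises $\pi^x_{C_0}$ as virtually obtained from dominated non-equivalent components, contradicting the principality of $(\{x\}, C_0)$.

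For the ``in particular'' clause, I assume $Z_0 \leq x$ and aim to derive that \ref{A} fails. First I rule out $|C_0| > 1$: any $z \in Z_0$ with $z \leq x$ has $\lk(z) \subseteq \st(x)$ and hence no neighbour in $\Gamma \setminus \st(x) \supseteq C_0 \setminus \{z\}$, which is incompatible with $C_0$ being connected of size at least two. So $C_0 = Z_0 = \{z_0\}$ with $z_0 \leq x$, and in particular $z_0 \notin \st(x)$. The crucial observation is then an adjacency constraint: for any $u \leq x$ with $u \neq z_0$, $u$ cannot be adjacent to $z_0$, since otherwise $z_0 \in \lk(u) \subseteq \st(x)$, contradicting $z_0 \notin \st(x)$.

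Assuming \ref{A}, apply it to the pair $z_0 \leq x$ to obtain $w \neq z_0, x$ with $z_0 \leq w \leq x$; then $w \in U$, and by the adjacency constraint $w$ is not adjacent to $z_0$, so $z_0 \notin \st(w)$. Combined with $\lk(z_0) \subseteq \st(w)$ coming from $z_0 \leq w$, this makes $\{z_0\} = C_0$ a connected component of $\Gamma \setminus \st(w)$, i.e.\ a $w$-component, contradicting the first sentence. Hence \ref{A} must fail. The only subtle step is isolating the adjacency constraint from $z_0 \notin \st(x)$; once this is recognised, a single application of \ref{A} suffices and no iterative descent through chains of dominating vertices is required.
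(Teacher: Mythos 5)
Your proof is correct and follows essentially the same route as the paper's, which states the first sentence follows ``immediately from principality'' and then, assuming $Z_0 \leq x$, applies condition~\ref{A} to produce a $u$ with $z_0 \leq u \leq x$ making $C_0$ a $u$-component. You have simply spelled out the details the paper leaves implicit: unpacking why membership of $\pi^x_{C_0}$ in $P^{[u]}_{C_0}$ with $n=1$ violates principality, why $Z_0 \leq x$ forces $C_0 = \{z_0\}$, and the adjacency observation justifying that $\{z_0\}$ really is a connected component of $\Gamma \setminus \st(w)$.
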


\begin{proof}
	The fact that $C_0$ is not a $u$--component for any $u\in U$ follows immediately from principality of $(X,C_0)$.	
	If $Z_0$ is dominated by $x$, then $Z_0 = \{z_0\}$.
	If condition~\ref{A} holds then there is some other vertex $u$ such that $z_0 \leq u \leq x$.
	This implies that $Z_0=C_0$ is a $u$--component and $u\in U$, a contradiction.
\end{proof}

\begin{lem}\label{lem:Ci not u cpnt}
	There exists some $i\ne 0$ so that $C_i$ is not a $u$--component for any $u\in U$.
	
	In particular, either $Z_i$ is not dominated by $x$, or condition~\ref{A} fails..
\end{lem}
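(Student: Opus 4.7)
The plan is to mirror the proof of Lemma~\ref{lem:Z0 not domianted by x}, exploiting the identity $\pi^x_{C_0}\pi^x_{C_1}\cdots\pi^x_{C_r}=\ad_x$ in $\Aut(A_\Gamma)$ (which becomes trivial in $\Out(A_\Gamma)$) to translate a failure of the claim into a contradiction with principality of $([x],C_0)$.

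For the main assertion, I would argue by contradiction: assume that for every $i\ne 0$ there exists $u_i\in U$ with $C_i$ a $u_i$-component. Since $u_i\leq x$, $u_i\ne x$, and $[x]=\{x\}$, one has $[u_i]\leq [x]$ and $[u_i]\ne [x]$; moreover, $C_i\cap\st(x)=\emptyset$, so that $C_i\setminus\st(x)=C_i$ and hence $\pi^x_{C_i}\in P^{[u_i]}_{C_i}$. Rearranging the identity above gives $(\pi^x_{C_0})^{-1}=\pi^x_{C_1}\cdots\pi^x_{C_r}$ in $\Out(A_\Gamma)$, which, via Definition~\ref{def:virtuall obtained from dominated components} with $n=-1$, exhibits $\pi^x_{C_0}$ as virtually obtained from dominated non-equivalent components, contradicting the principality of $([x],C_0)$.

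For the \emph{In particular} statement I would assume $Z_i\leq x$ and that condition~\ref{A} holds, and aim to derive a contradiction to the first part. First, any $z\in Z_i\subseteq C_i$ with $z\leq x$ satisfies $\lk(z)\subseteq\st(x)$, so the $x$-component of $z$ collapses to $\{z\}$, forcing $Z_i=C_i=\{z_i\}$. Condition~\ref{A} then yields a vertex $u\ne z_i,x$ with $z_i\leq u\leq x$, and $u\in U$ follows from $[x]=\{x\}$. The one subtle point, which is the heart of the argument, is checking that $u$ cannot be adjacent to $z_i$: if it were, then $u\in\lk(z_i)\subseteq\st(x)$ combined with $u\ne x$ would force $u$ adjacent to $x$, after which Remark~\ref{rem:domination} would give $\st(u)\subseteq\st(x)$, so that $z_i\in\lk(u)\subseteq\st(x)$, contradicting $z_i\in C_i$. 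Thus $z_i\notin\st(u)$, and combined with $\lk(z_i)\subseteq\st(u)$ this shows that $\{z_i\}=C_i$ is a $u$-component with $u\in U$, contradicting the first part of the lemma.
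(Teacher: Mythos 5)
Your proposal is correct and follows the paper's argument essentially verbatim. The first part mirrors the paper exactly: the negation puts $\pi^x_{C_i}\in P^{[u_i]}_{C_i}$ for each $i\ne 0$, and the relation $(\pi^x_{C_0})^{-1}=\pi^x_{C_1}\cdots\pi^x_{C_r}$ in $\Out(A_\Gamma)$ then violates principality of $(X,C_0)$. For the \emph{in particular} part, the paper compresses the argument to one sentence (``if $Z_i$ were dominated by $x$, then it cannot be dominated by any $u\in U$ as otherwise it would be a $u$--component''), leaving the reader to invoke~(A1); you spell out the invocation of~(A1) and the verification that $\{z_i\}$ is indeed a $u$--component, in particular the check $z_i\notin\st(u)$. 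That check is sound, though it can be shortened: $z_i\in\lk(u)$ already gives $z_i\in\lk(u)\subseteq\st(x)$ directly from $u\leq x$, contradicting $z_i\in C_i$, without needing Remark~\ref{rem:domination}. In all, the same approach, with a bit more detail than the paper chooses to record.
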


\begin{proof}
	If for each $C_i$ there was a $u_i\in U$ so that $C_i$ were a $u_i$--component, then
	$\pi^x_{C_i} \in P^{[u_i]}_{C_i}$.
	In particular, $(\pi^x_{C_0})^{-1} = \pi^x_{C_1} \cdots \pi^x_{C_r}$ is in the subgroup generated by  the sets $P^{[u_i]}_{C_i}$, for $i=1,\ldots, r$, contradicting principality of $(X,C_0)$.
	
	If $Z_i$ were dominated by $x$, then it cannot be dominated by any $u\in U$ as otherwise it would be a $u$--component.
\end{proof}

\subsection{Two cases}\label{sec:virt ind cases}

We divide into two cases  according to the possible arrangements of components $Z_i$.
Recall that $U = \{ u \mid u \leq x, u\ne x\}$.
The two cases are:

\begin{enumerate}[(1)]
	\item\label{case:same y} There is a class $Z_i$, not dominated by $x$, so that $Z_0$ and $Z_i$ are in the same $u$--component  for every $u\in U$
	\item\label{case:not same y} For every class $Z_i$, for $i\ne 0$ and which is not dominated by $x$, there is some $u\in U$ so that $Z_0$ and $Z_i$ are in separate $u$--components.
\end{enumerate}

\subsubsection{Examples}

We now give some examples of graphs satisfying each condition.
We will refer back to these examples throughout the remainder of the proof of Theorem~\ref{thm:no T special}.

Note that these are not ``new'' examples, in the sense that $\Aut(A_\Gamma)$ is already known to be virtually indicable by earlier results.
For example we can apply Theorem~\ref{thms:AMP} (a result of Aramayona and Mart\'inez-P\'erez \cite{Aramayona-MartinezPerez}) to obtain a virtual surjection onto $\Z$.
We use these examples though to highlight our method.
Some tricks, such as replacing vertices by equivalence classes, or introducing benign vertices, can be used to prevent the application of earlier results, but doing so overcomplicates matters and defeats the purpose of including some, hopefully, enlightening examples.

\begin{ex}\label{ex-case1}
	The graph $\Gamma$ given in  Figure~\ref{fig:ex-case1} satisfies the conditions of case~\ref{case:same y}.
	There are five $x$--components, $C_0,\ldots, C_4$, and in each a domination-minimal vertex $z_i$ has been selected. 	
	The vertex $z_0$ dominates $x$; while $z_2$ and $z_3$ are dominated by $x$, and so form the set $U$.
	
	We claim that $(X,C_0)$ and $(X,C_1)$ are both principal.
	There is only one $z_2$--component.
	There are two $z_3$--components, $C_4$ being one,  and the other containing $C_0\cup C_1\cup C_2\cup \{x\}$ and some unlabelled vertices from $\st(x)$.
	As there is no way to separate $C_0$ and $C_1$ by the star of either $z_2$ or $z_3$, both $(X,C_0)$ and $(X,C_1)$ must be principal.

	\begin{figure}[h!]
		\vspace{3mm}
		\centering
		\labellist
		\pinlabel $x$ at -10 165
		\pinlabel $z_4$ at 250 335
		\pinlabel $z_0$ at 175 195
		\pinlabel $z_1$ at 250 -10
		\pinlabel $z_2$ at 0 -10
		\pinlabel $z_3$ at 0 335
		\endlabellist
		\includegraphics[width=8cm]{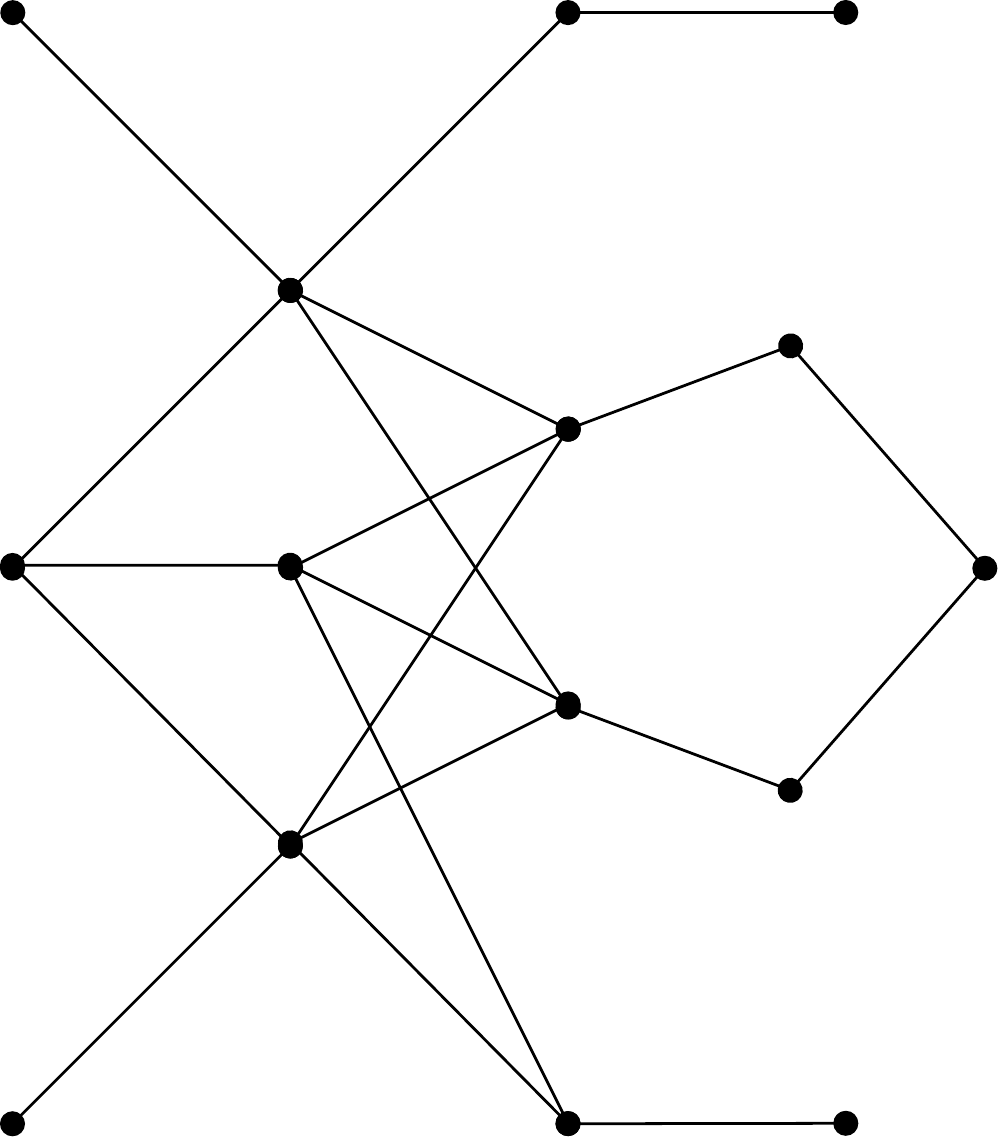}
		\vspace{3mm}
		\caption{The graph $\Gamma$ of Example~\ref{ex-case1} satisfying case~\ref{case:same y}.}
		\label{fig:ex-case1}
	\end{figure}
\end{ex}

\begin{ex}\label{ex-case3}
	The graph $\Gamma$ given in  Figure~\ref{fig:ex-case3} satisfies the conditions of case~\ref{case:not same y}.
		
	There are four $x$--components, which we can denote $C_i$ so that $z_i\in C_i$, for $i=0,1,2,3$.
	We can choose $Z_i = \{z_i\}$ for each $i$.
	
	We claim that $(X,C_0)$ is principal, and to show this we use Lemma~\ref{lem:span principal}
	Consider the vector space $W$ over $\Q$ with basis given by $\{\z_1,\z_2,\z_3\}$.
	The only vertices dominated by $x$ are $y_1$ and $y_2$. 
	For each partial conjugation $\pi^{y_i}_D$ we can, up to an inner automorphism, assume that $C_0\cap D = \emptyset$.
	Thus we have partial conjugations with supports  $C_1\cup C_2$ and $C_2\cup C_3$.
	These translate into vectors $\langle 1,1,0\rangle$ and $\langle 0,1,1\rangle$ as described in Lemma~\ref{lem:span principal}.
	It is clear that $\langle 1,1,1\rangle $ is not in the $\Q$--span of these two vectors, so $(X,C_0)$ is principal.

	\begin{figure}[h!]
		\vspace{3mm}
		\centering
		\labellist
		\pinlabel $x$ at 190 215
		\pinlabel $y_1$ at 122 105
		\pinlabel $y_2$ at 305 285
		\pinlabel $z_0$ at 88 418
		\pinlabel $z_1$ at 327 418
		\pinlabel $z_2$ at 327 -15
		\pinlabel $z_3$ at 88 -15
		\endlabellist
		\includegraphics[width=7.1cm]{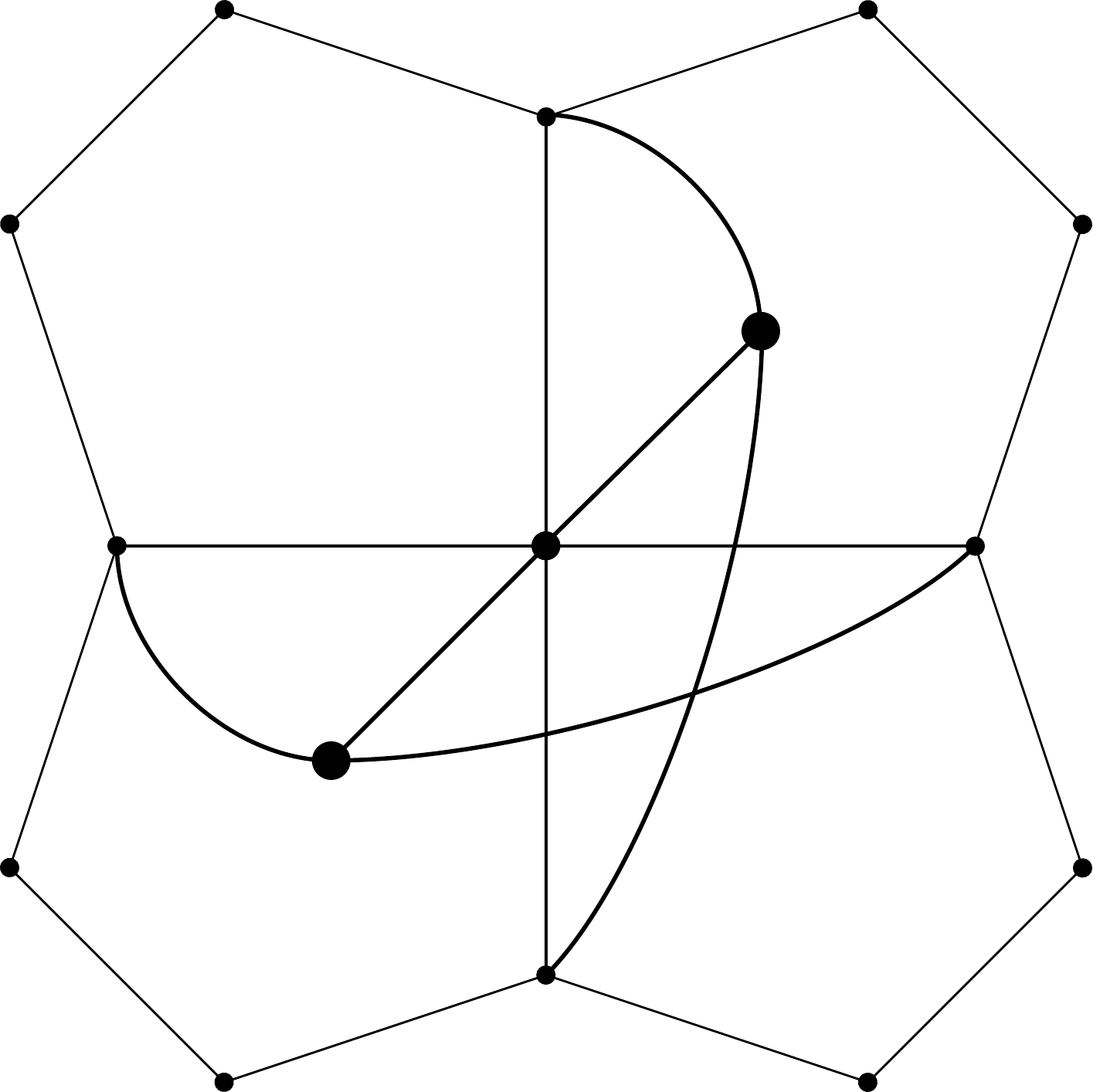}
		\vspace{3mm}
		\caption{The graph $\Gamma$ of Example~\ref{ex-case3} satisfying Case~\ref{case:not same y}.}
		\label{fig:ex-case3}
	\end{figure}
\end{ex}

\begin{ex}\label{ex-case2}
	The graph $\Gamma$ given in  Figure~\ref{fig:ex-case2} satisfies the conditions of case~\ref{case:not same y}.
	
	There are six $x$--components, $C_0,\ldots,C_5$, with a domination minimal vertex $z_i$ chosen in each $C_i$.
	The set $U$ consists of vertices $y_1$ and $y_2$.
	We note that $\st(y_1)$ separates $z_1,z_2$, and $z_3$ from $z_0$, while $\st(y_2)$ separates $z_3,z_4,$ and $z_5$ from $z_0$. This confirms we are in case~\ref{case:not same y}.
	
	To see that $(X,C_0)$ is principal, using the notation of Lemma~\ref{lem:span principal}, $\Delta$ consists of two vectors, $\langle 1,1,1,0,0\rangle $ from $y_1$, and $\langle 0,0,1,1,1\rangle $ from $y_2$.
	Evidently there is no way to get $\langle 1,1,1,1,1\rangle $ in the $\Q$--span of $\Delta$, implying $(X,C_0)$ is principal.
	
\begin{figure}[h!]
	\vspace{3mm}
	\centering
	\labellist
	\pinlabel $x$ at 165 299
	\pinlabel $y_1$ at 70 285
	\pinlabel $y_2$ at 259 285
	\pinlabel $z_0$ at 338 363
	\pinlabel $z_1$ at 245 458
	\pinlabel $z_3$ at -10 363
	\pinlabel $z_4$ at 165 110
	\pinlabel $z_5$ at 165 190
	\pinlabel $z_2$ at 165 363
	\endlabellist
	\includegraphics[width=8cm]{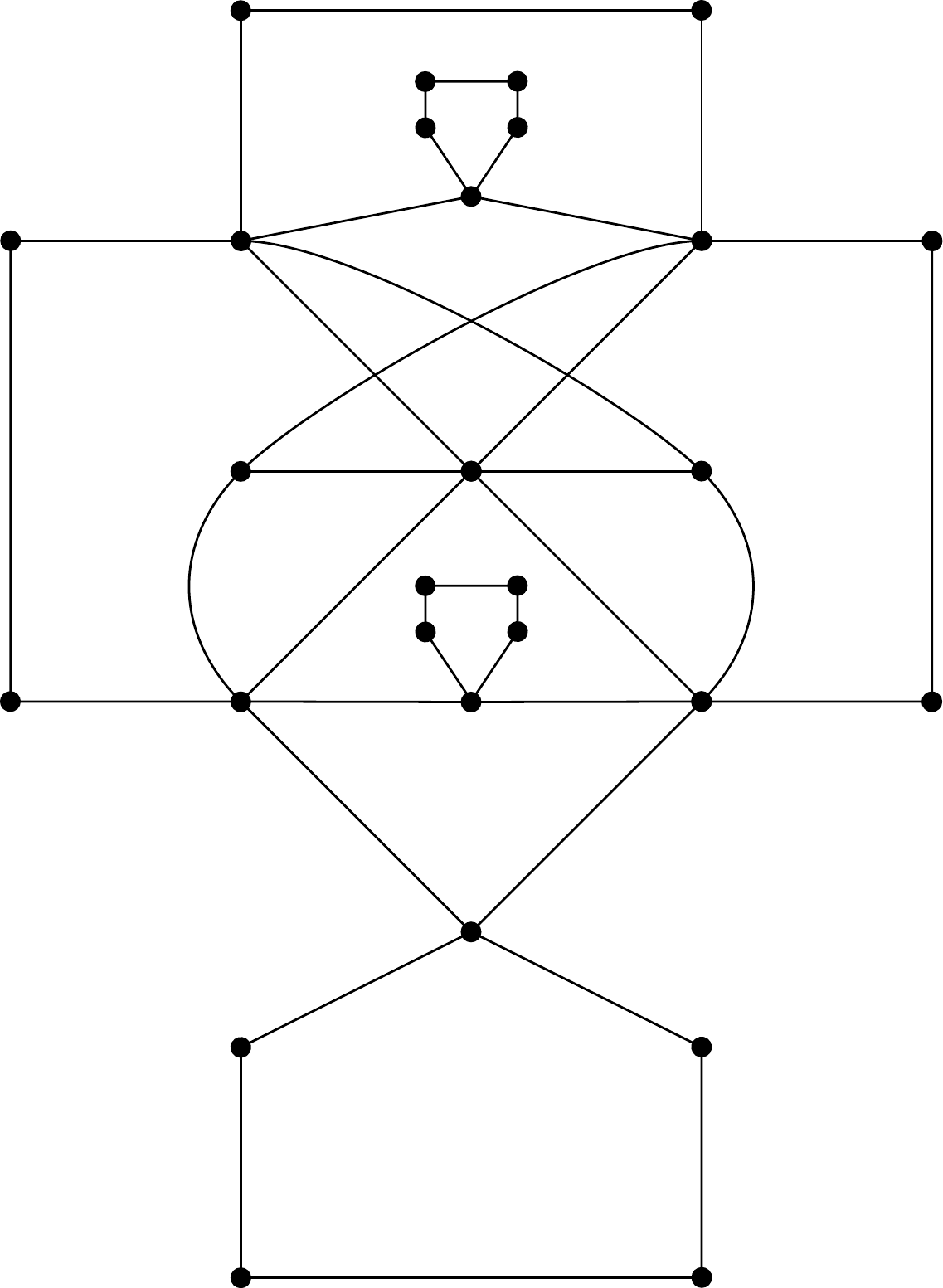}
	\vspace{3mm}
	\caption{The graph $\Gamma$ of Example~\ref{ex-case2} satisfying Case~\ref{case:not same y}.}
	\label{fig:ex-case2}
\end{figure}	
\end{ex}

\subsection{Refining the graph further}\label{sec:q}

We will describe two different ways to cut down $\Lambda_0$ to a smaller graph $\Lambda$, depending on which case we are in.
Ultimately, in each case we claim there are integers $c_0,c_1,\ldots,c_s,d$ and a homomorphism 
	$$q \colon \Sout^0(A_\Gamma) \to \Out(\Z^{c_0} \ast \cdots \ast \Z^{c_s} \ast \Z^d)$$
such that the image is sufficiently rich so that we can use a homological representation to get a virtual map to $\Z$.

Note that we will work under the assumption that conditions \ref{A} and \ref{B} hold in $\Gamma$, so Theorem~\ref{thms:AMP} does not already yield virtual indicability.
In particular, Lemmas~\ref{lem:Z0 not domianted by x} and~\ref{lem:Ci not u cpnt} tell us there is some $i>0$ so that $Z_0$ and $Z_i$ are not dominated by $x$.

\subsubsection{Case~\ref{case:same y}}

Relabel the classes $Z_i$ for $i>0$, if necessary, so that $Z_1, \ldots, Z_{t}$ are 
all the classes that are not dominated by $x$ and that are not separated from $Z_0$ by $\st(u)$, for any $u \in U$.
We first refine the graph $\Lambda_0$ by removing all classes $Z_i$ for $i>t$, and all vertices $y\in Y$, leaving a graph $\Lambda_1$ that has vertex set $Z_0 \cup \cdots \cup Z_t \cup \{x\}$.

First though we make the following observation.

\begin{lem}\label{lem:Zi dominates x implies case 1}
	If there is some $i$ such that $Z_i$ dominates $x$, then we are in case~\ref{case:same y} and, after the above relabelling, $i\leq t$.
\end{lem}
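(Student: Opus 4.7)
The plan is to verify that the class $Z_i$ itself fulfils both conditions defining the list $Z_1,\ldots,Z_t$ in the relabelling---that $Z_i$ is not dominated by $x$, and that $Z_i$ is not separated from $Z_0$ by $\st(u)$ for any $u\in U$. Doing so simultaneously places us in case~\ref{case:same y} and gives $i\leq t$. The first condition is immediate: if $Z_i\leq x$ held as well, then $Z_i\sim x$, forcing $Z_i\subseteq [x]=\{x\}$ and contradicting $Z_i\subseteq C_i$. So from now on, fix $u\in U$; since $[x]=\{x\}$ and $u\neq x$, we have $u\not\sim x$, so $x\not\leq u$, i.e.\ $\lk(x)\not\subseteq\st(u)$.

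I would then split on whether $u\in\st(x)$. If $u\in\st(x)$ then $u\in\lk(x)$, and for any $z\in Z_i$ the relations $z\geq x$ and $z\notin\st(x)$ together give $\lk(x)\subseteq\lk(z)$, so $u\in\lk(z)$ and $Z_i\subseteq\st(u)$, which makes $Z_i$ trivially not separated from $Z_0$. Otherwise $u\notin\st(x)$, so $\lk(u)\subseteq\lk(x)$ and $u$ is forced into its own singleton $x$-component $\{u\}=C_k$. Here I would rule out $k=0$ (otherwise $Z_0=\{u\}\leq x$, contradicting Lemma~\ref{lem:Z0 not domianted by x} under assumption~\ref{A}) and $k=i$ (since $z_i>x$ strictly forces a neighbour of $z_i$ outside $\st(x)$, so $C_i$ has more than one vertex), which ensures $\st(u)$ is disjoint from both $C_0$ and $C_i$, so each remains connected inside $\Gamma\setminus\st(u)$.

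The main obstacle is then to actually construct a path from $Z_0$ to $Z_i$ inside $\Gamma\setminus\st(u)$, and this is where Lemma~\ref{lem:Z0 not domianted by x} really earns its keep. Since $C_0$ is not a $u$-component but is connected in $\Gamma\setminus\st(u)$, it must be strictly contained in some component there, so some vertex $v\in\Gamma\setminus(\st(u)\cup C_0)$ is adjacent to $C_0$. The only external edges from $C_0$ lead into $\lk(x)$, forcing $v\in\lk(x)\setminus\st(u)$; and the inclusion $\lk(x)\subseteq\lk(z_i)$ then means this same $v$ is adjacent to every vertex of $Z_i$. Linking $z_0$ to the neighbour $w\in C_0$ of $v$ through the connected set $C_0$ and appending $w-v-z_i$ gives the required path, completing the verification.
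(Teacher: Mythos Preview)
Your argument is essentially correct when $i>0$, and the core idea---producing a vertex $v\in\lk(x)\setminus\st(u)$ and using $\lk(x)\subseteq\lk(z_i)$ to link $C_0$ to $Z_i$---is exactly what the paper does (the paper routes the path through $x$ itself rather than stopping at $v$, using Lemma~\ref{lem:z cpnt is Ci} implicitly to see that the $u$--component containing $C_0$ also contains $x$).

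There is, however, a genuine gap: you never handle the possibility $i=0$. This case does occur (see Example~\ref{ex-case1}, where $z_0$ dominates $x$). Your stated plan is to show that $Z_i$ itself lands in the relabelled list $Z_1,\ldots,Z_t$, but when $i=0$ the class $Z_0$ is never part of that list regardless of what properties it has; your path then connects $Z_0$ to itself, which does not produce a witness for case~\ref{case:same y}. The paper covers this by invoking Lemma~\ref{lem:Ci not u cpnt} to obtain some $j\neq 0$ with $C_j$ not a $u$--component for any $u\in U$, and then runs the connecting argument between $Z_0$ and $Z_j$. Your method adapts immediately once you do this (swap the roles of $C_0$ and $C_j$, and use $\lk(x)\subseteq\lk(z_0)$), but as written the case is missing.

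A smaller point: in the subcase $u\in\st(x)$ you derive $Z_i\subseteq\lk(u)$ and stop there. You should push one step further: since $u\leq x$ gives $\lk(u)\subseteq\st(x)$, this forces $Z_i\subseteq\st(x)$, contradicting $Z_i\subseteq C_i$. So this subcase is vacuous, not a situation where $Z_i$ sits inside $\st(u)$ and is ``trivially not separated.'' The conclusion you want (same $u$--component) would in fact be \emph{false} if $Z_i\subseteq\st(u)$ actually occurred, so recognising the contradiction matters.
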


\begin{proof}
	Assume $Z_i$ dominates $x$.
	By Lemmas~\ref{lem:Z0 not domianted by x} (if $i>0$) and~\ref{lem:Ci not u cpnt} (if $i=0$), we know there is some $j\ne i$ so that $C_j$ is not a $u$--component for any $u\in U$, and furthermore that $j$ can be chosen  so that $0\in \{i,j\}$.
	Since  for any $u\in U$ we know that $C_j$ is not a $u$--component, it follows that $Z_j$ and $x$ are not separated by $\st(u)$.
	The same is true of $x$ and $Z_i$. Indeed, since $u$ is not equivalent to $x$, we may use a vertex that is in $\lk(x) \setminus \st(u)$ to connect $x$ to $Z_i$ outside of $\st(u)$.
	Hence, for each $u\in U$, we can connect $Z_i$ and $Z_j$ with a path via $x$ that avoids the star of $u$.
	It follows that we are in case~\ref{case:same y} and, after the above relabelling, we must also have $i\leq t$.
\end{proof}

\begin{lem}\label{lem:I restriction}
	The restriction map $r \colon \im p \to \Out(A_{\Lambda_1})$ is well-defined.
\end{lem}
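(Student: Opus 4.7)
The plan is to verify, for the natural generating set of $\im p$ inherited from the Laurence--Servatius generators of $\Sout^0(A_\Gamma)$, the two tests from Section~\ref{sec:prelim:restriction projection} that guarantee the restriction map to $\Out(A_{\Lambda_1})$ is well-defined. Under $p$ the only generators with nontrivial image are transvections $R_u^v$ with $u,v\in\Lambda_0$ and partial conjugations $\pi^w_C$ with $w\in\Lambda_0$, so these are the cases I need to treat. Throughout, the domination relation is the one in $\Gamma$.

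For the transvection test, suppose $R_u^v\in\im p$ with $u\in\Lambda_1$; I must show $v\in\Lambda_1$. If $u=x$ then $x\le v$, and Lemma~\ref{lem:Zi dominates x implies case 1} forces $v$ to lie in some $Z_i$ with $i\le t$ or $v=x$; in either event $v\in\Lambda_1$. If $u\in Z_i$ for some $i\in\{0,1,\ldots,t\}$, then by the relabelling convention (for $i\ge 1$) together with Lemma~\ref{lem:Z0 not domianted by x} under condition \ref{A} (for $i=0$), the class $Z_i$ is not dominated by $x$; in particular $u\not\le x$. Lemma~\ref{lem:z_i leq z_j implies z_i leq x} then rules out $v$ lying in any $Z_j$ with $j\ne i$, while $v\in Y\cup\{x\}$ would give the chain $u\le v\le x$, contradicting $u\not\le x$. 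Therefore $v\in Z_i\subseteq\Lambda_1$.

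For the partial conjugation test, take $\pi^w_D\in\im p$ with $w\in\Lambda_0\setminus\Lambda_1$, so either $w\in Y$ or $w\in Z_j$ for some $j>t$. I first observe that $\Lambda_1\cap\lk(w)=\emptyset$: when $w\in Y\subseteq\st(x)$ this follows because $\lk(w)\subseteq\st(x)\setminus\{w\}$ while every element of $\Lambda_1\setminus\{x\}$ lies in some $C_i\subseteq\Gamma\setminus\st(x)$; when $w\in Z_j$ with $j>t$, any vertex adjacent to $w$ lies in $C_j\cup\st(x)$, and again the conclusion follows. Hence $\Lambda_1\cap\st(w)$ is $\{x\}$ in the first case and $\emptyset$ in the second. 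It remains to show that the vertices of $\Lambda_1\setminus\st(w)$ lie in a single $w$-component of $\Gamma$.

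When $w\in Y\subseteq U$ this is immediate from the defining property of case~\ref{case:same y} together with the relabelling: the classes $Z_1,\ldots,Z_t$ are chosen to be precisely those non-$x$-dominated classes that are not separated from $Z_0$ by $\st(u)$ for any $u\in U$, so $Z_0,Z_1,\ldots,Z_t$ share a single $w$-component. When $w\in Z_j$ with $j>t$ the argument is more delicate, since $w$ need not itself lie in $U$. However, from $[x]=\{x\}$ we have $x\not\le w$, so $\lk(x)\setminus\st(w)$ is nonempty; combined with $\lk(w)\subseteq C_j\cup\st(x)$, this allows one to build paths in $\Gamma\setminus\st(w)$ from $x$ through $\lk(x)\setminus\st(w)$ into the interiors of the remaining $C_i$, which are themselves disjoint from $\st(w)$. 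This final step, and in particular the subcase in which $Z_j$ is separated from $Z_0$ by $\st(u)$ for some $u\in U$ without $Z_j$ itself lying in $U$, is the main technical obstacle: it requires carefully combining the defining non-separation property of case~\ref{case:same y} with the strict inequality $x\not\le w$ forced by $[x]=\{x\}$.
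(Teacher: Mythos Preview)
Your treatment of transvections and of partial conjugations with multiplier $w\in Y$ is correct and follows the paper closely. The genuine gap is the case $w\in Z_j$ with $j>t$, which you yourself flag as unfinished.

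First, a small error: the justification ``from $[x]=\{x\}$ we have $x\not\le w$'' is wrong. The hypothesis $[x]=\{x\}$ only rules out $w$ being \emph{equivalent} to $x$, not $x\le w$. The correct reason $x\not\le w$ is Lemma~\ref{lem:Zi dominates x implies case 1}: if $Z_j$ dominated $x$ then $j\le t$ after relabelling.

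More seriously, the approach you sketch does not work. Knowing that $\lk(x)\setminus\st(w)$ is nonempty gives no control over \emph{which} component $C_i$ such a vertex is adjacent to. Nothing prevents every vertex of $\lk(x)$ adjacent to $C_0$ from lying in $\st(w)$, while some other vertex of $\lk(x)$ (adjacent only to $C_j$, say) lies outside $\st(w)$; in that scenario $\st(w)$ would separate $C_0$ from $x$ and your path-building through $\lk(x)$ would simply fail.

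The paper's argument is different in kind. Writing $z$ for the multiplier (so $z\in Z_j$, $j>t$), one argues by contradiction. If $\st(z)$ separates $Z_0$ from some $Z_l$ with $l\le t$, then the case~\ref{case:same y} path from $Z_l$ to $Z_0$ avoiding $\st(u)$ must pass through $\st(z)$; taking a detour to $z$ along that path yields a path from $Z_0$ to $z$ avoiding $\st(u)$, and this works for every $u\in U$. Separately one checks $z\notin U$. These two facts together say that $Z_j$ satisfies both defining conditions for membership in $\{Z_1,\ldots,Z_t\}$, contradicting $j>t$. A similar, slightly more delicate, detour argument handles the subcase where $\st(z)$ separates some $Z_l$ from $x$. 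The key idea is to bootstrap the case~\ref{case:same y} non-separation of the $Z_l$'s into non-separation of $z$ itself from $Z_0$, rather than attempting to route paths through $\lk(x)$.
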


\begin{proof}
	First consider transvections in the image of $p$.
	Suppose $z \in Z_i$ for some $i$ with $0\leq i \leq t$.
	By assumption $Z_1,\ldots,Z_t$ are not dominated by $x$, while $Z_0$ is not dominated by $x$ by virtue of Lemma~\ref{lem:Z0 not domianted by x}.
	Thus we cannot have $z\leq z'$ with $z' \in Z_{i'}$ and $i' \ne i$, since then Lemma~\ref{lem:z_i leq z_j implies z_i leq x} implies $Z_i$ would be dominated by $x$.	
	Similarly, we cannot have $z \leq y\in Y$.	
	Thus any transvection $R_z^w$ must have $w \in \Lambda_1$, and thus preserve $A_{\Lambda_1}$.
	
	The remaining transvections to consider are those of the form $R_x^w$.
	But then by Lemma~\ref{lem:Zi dominates x implies case 1}, $w$ must be a vertex of some $Z_i \subset \Lambda_1$ and so $A_{\Lambda_1}$ us again preserved.
	All other transvections in $\im p$ act on a vertex not in $\Lambda_1$ and so fix $A_{\Lambda_1}$.
	
	Next consider partial conjugations.
	By construction those of the form $\pi^y_D$ with $y\in Y$ act trivially on $\Lambda_1$. 
	Indeed, $x\in \st(y)$ and the remaining vertices of $\Lambda_1$, namely $Z_0 \cup Z_1 \cup \cdots \cup Z_s$, are all in the same $y$--component.
	
	That leaves us to consider the partial conjugations $\pi^z_D$ for $z\in Z_i$ for some $i>t$. 
	We want to show that $\st(z)$ cannot separate $\Lambda_1$ into more than one component.
	For contradiction, 
	first suppose $\st(z)$ separates $Z_0$ and $Z_j$ for some $j\leq s$.
	Then every path from $Z_j$ to $Z_0$ passes through $\st(z)$, so $z\notin U$ by choice of $Z_j$.
	Furthermore, a path from $Z_j$ to $Z_0$ avoiding $\st(u)$, for $u\in U$ passes through $\st(z)$ and can therefore take a detour to $z$, yielding a path from $Z_0$ to $z$ avoiding $\st(u)$.
	Note that $z$ is not in $\lk(u)$ since $\lk(u)$ is contained in $\st(x)$.
	This implies that for each $u$ we have $z$ and $Z_0$ in the same component of $\Gamma\setminus \st(u)$.
	It follows that $z\in \Lambda_1$, a contradiction.
	
	On the other hand, suppose $\st(z)$ separates some $Z_j$ in $\Lambda_1$ and $x$.
	Since, as just proved, $\st(z)$ does not separate $Z_0$ and $Z_j$ for $j\ne 0$, we can assume $j\ne 0$.
	For each $u\in U$ there is a path $p$ from $Z_j$ to $Z_0$ avoiding $\st(u)$.
	Let $p'$ be the initial segment of $p$ starting in $Z_j$ and finishing at the first vertex of $p$ that is in $\st(x)$---such a vertex must exist, denote it by $w$.
	Then adjoin $x$ to the end of $p'$, creating a path $p''$ from $Z_j$ to $x$.
	Since, by assumption, $\st(z)$ separates $Z_j$ and $x$, some vertex of $p''$ must be in $\st(z)$.
	As $x \notin \st(z)$, such a vertex must be from $p'$, and hence $p$.
	It follows then that $z \ne u$, and therefore $z\notin U$.	
	As $z \notin Z_j$ then $\st(z) \cap C_j$ is empty and we must have $w \in \st(z)$.
	We can then append the vertex $w$ to the path $p'$, creating a path from $Z_j$ to $z$ that avoids $\st(u)$.
	As $Z_0$ and $Z_j$ are not separated by $\st(u)$ we get that $Z_0$ and $z$ are not either.
	As before, we get $z\in\Lambda_1$.
			
	Thus every transvection and partial conjugation in $\im p$ preserves $A_{\Lambda_1}$ and, since they generate the image, the restriction map is well-defined.
\end{proof}

We observe that when $Z_i$ is not dominated by $x$, as is the case for each $Z_1,\ldots, Z_t$, we can use a projection map to eliminate it.

\begin{lem}\label{lem:kill Z not dominated}
	Let $i_1,\ldots,i_n \in \{1,\ldots,t\}$.
	If $\Lambda$ is the graph obtained by removing $Z_{i_1},\ldots, Z_{i_n}$ from $\Lambda_1$,
	then the projection map
	$$p_2 \colon \im {r} \to \Out(A_{\Lambda})$$
	is well-defined.
\end{lem}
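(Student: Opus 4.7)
My plan is to invoke the criterion for projection maps from Section~\ref{sec:prelim:restriction projection}: expressing $\im r$ via its inherited generators (the images under $p$ then $r$ of the partial conjugations and transvections generating $\Sout^0(A_\Gamma)$), it suffices to verify that whenever a generating transvection $R_u^v$ satisfies $v \in \Lambda$, one also has $u \in \Lambda$. Partial conjugations impose no condition. Every non-trivial transvection in $\im r$ comes from a transvection $R_u^v$ of $A_\Gamma$ with $u, v \in \Lambda_1 = Z_0 \cup \cdots \cup Z_t \cup \{x\}$ and $u \leq v$ in $\Gamma$, so the task reduces to a short case analysis on such pairs with $v \in \Lambda$.

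I would split on whether $v = x$ or $v \in Z_j$ for some $j \in \{0,1,\ldots,t\} \setminus \{i_1,\ldots,i_n\}$. In the first case, $u \leq x$ with $u$ lying in one of $Z_0,Z_1,\ldots,Z_t$; but each of $Z_1,\ldots,Z_t$ was chosen to be not dominated by $x$, and by Lemma~\ref{lem:Z0 not domianted by x} (using the standing assumption that condition~\ref{A} holds) the class $Z_0$ is not dominated by $x$ either, so no such $u$ exists and this case is vacuous. In the second case, if $u \in Z_j$ then $Z_j \subseteq \Lambda$ gives $u \in \Lambda$; if $u = x$ then $u \in \Lambda$ since $x$ is never removed; and the remaining possibility $u \in Z_i$ with $i \neq j$ triggers Lemma~\ref{lem:z_i leq z_j implies z_i leq x}, forcing $u \leq x$ and hence placing $Z_i$ among the classes in $\Lambda_1$ dominated by $x$, which is again impossible.

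The only delicate point in the argument is the cross-class subcase $u \in Z_i$, $v \in Z_j$ with $i \neq j$, which is precisely where one might worry about a generating transvection ``leaking'' from a removed $Z_{i_k}$ into a retained $Z_j$. Lemma~\ref{lem:z_i leq z_j implies z_i leq x}, combined with the fact that no class in $\Lambda_1$ is dominated by $x$, cleanly rules this out. Once this is done, the projection criterion is satisfied and $p_2$ is well-defined on $\im r$.
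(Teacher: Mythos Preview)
Your proof is correct and follows essentially the same approach as the paper. The paper argues the contrapositive directly---assuming the subject lies in a deleted $Z_{i_j}$, it uses $Z_{i_j}\not\leq x$ together with Lemma~\ref{lem:z_i leq z_j implies z_i leq x} to force the multiplier into $Z_{i_j}$ as well---whereas you start from $v\in\Lambda$ and run the same cross-class obstruction to place $u$ in $\Lambda$; the key ingredient in both is that no $Z_i$ appearing in $\Lambda_1$ is dominated by $x$.
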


\begin{proof}
	For the projection map to be well defined we need only check the image of transvections (in $\im r$) map to automorphisms under $p_2$.
	In particular, the transvection $R_v^w$ will fail to map to an automorphism only if $v$ is not in  $\Lambda$ whereas $w$ is.
	We may therefore assume that $v$ is in some $Z_{i_j}$ that is deleted to make $\Lambda$.
	Since $v \not\leq x$, Lemma~\ref{lem:z_i leq z_j implies z_i leq x} tells us that $v$ is not dominated by any $z \in Z_i$ for $i \ne i_j$.
	So for $w$ to dominate $v$, it must therefore be in $Z_{i_j}$, hence not in $\Lambda$.
\end{proof}

We can therefore choose our favourite $i \in \{1,\ldots, t\}$ and use $p_2$ from Lemma~\ref{lem:kill Z not dominated} to get
$$q=p_2\circ r \circ p \colon \Sout^0(A_\Gamma) \to \Out(\Z^{c_0} \ast \Z^{c_i} \ast \Z).$$
We have $\Lambda$ the graph on vertex set $Z_0 \cup Z_i \cup \{x\}$.

\begin{rem}\label{rem:largeness}
If we can choose $Z_i$ here so that $\st(Z_i)$ separates $Z_0$ and $x$ in $\Gamma$, or so that $\st(Z_0)$ separates $Z_i$ and $x$, then in fact $\Aut(A_\Gamma)$ is large.
This is the case, for example, if $Z_0$ or $Z_i$ dominates $x$.
To obtain largeness, observe that the vertices of $\Lambda$ form a special SIL, as defined in \cite{GuirardelSale-vastness}, which implies largeness by \cite[Proposition~3.15]{GuirardelSale-vastness}.
The special SIL will be either $(x,z_0 \mids z_i)$ or $(x,z_i \mids z_0)$ where $z_i\in Z_i$ and $z_0\in Z_0$.
There is a technical note here though. 
Special SILs require the equivalence class of each vertex involved to be abelian. This will be so once we reach $\Lambda_0$ (by Lemma~\ref{lem:free Zi}), but is not necessarily the case in $\Gamma$ itself.
Then, to obtain largeness from the special SIL one needs to verify that the required SIL automorphisms survive the map $p$.
\end{rem}

\begin{ex}
	We continue Example~\ref{ex-case1}.
	The classes $z_0$ and $z_1$ are not separated by any star from $U$, while $z_4$ is separated from $z_0$ by $\st(z_3)$.
	Thus we take $t=1$, and restrict to $A_{\Lambda_1}$, with $\Lambda_1$ given by vertices $\{z_0,z_1,x\}$.
	The map $p_2$ is not required in this example.
\end{ex}

\subsubsection{Case~\ref{case:not same y}}

In this case, we use a projection map to kill the partial conjugations $\pi^z_D$ which would otherwise later cause trouble in the image of the homological representation.
We gather these partial conjugations up in a set $B$, defined to consist of partial conjugations $\pi^z_D \in \im p$ such that 
	\begin{itemize}
		\item $z \in Z_i$ for some $i\ne 0$,
		\item 	$z \not\leq x$,
		\item 	and $D$ is not a union of $u$--components for any $u\in U$
	\end{itemize}

The aim is to kill enough of the $Z_i$ so that the partial conjugations in $B$ act trivially on what remains, but not to kill too many so that the action of $\pi^x_{C_0}$ becomes trivial, and furthermore, so that we can still track the action of partial conjugations by $u \in U$ on each component $C_i$, whether it has been deleted or not.

The following lemma is a crucial tool in accomplishing this.

\begin{lem}\label{lem:bad supports in y-cpnt}
	Suppose $\pi^z_D \in B$.
	Then either
	\begin{itemize}
		\item $D$ and $z$ are in the same $u$--component for every $u \in U$, or
		\item $x\in D$.
	\end{itemize}	
\end{lem}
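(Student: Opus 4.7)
The plan is to prove the contrapositive: assuming $x \notin D$, I will show that $D$ and $z$ lie in the same $u$-component for every $u \in U$. The opening step is to establish $D \cap \st(x) = \emptyset$. Since $z \in C_i$ with $i \ne 0$ we have $z \notin \st(x)$, and consequently $x \notin \st(z)$; so if some $v \in D \cap \lk(x)$ existed, then $v \notin \st(z)$ would force the edge $vx$ to lie inside $\Gamma \setminus \st(z)$, placing $x$ in the same connected component of $\Gamma \setminus \st(z)$ as $v$, and hence forcing $x \in D$, a contradiction. Thus $D \subseteq \Gamma \setminus \st(x) = \bigsqcup_k C_k$, and connectedness yields $D \subseteq C_j$ for some $j$.

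I now split on the value of $j$. If $j = i$, the conclusion is immediate: one checks that $\st(u) \cap C_i = \emptyset$ (using $\lk(u) \subseteq \st(x)$ and the fact that $u \ne z$ since $z \not\leq x$), so $C_i$ lies in the $u$-component containing $z$, and so does $D$. The substantive case is $j \ne i$, where one must first establish both $D = C_j$ and the compatibility relation $N_j \subseteq \lk(z)$ for $N_j := \lk(x) \cap N(C_j)$. Because $C_j$ and $z$ lie in different $x$-components, $\lk(z) \cap C_j = \emptyset$, so $\st(z) \cap C_j = \emptyset$ and $C_j$ sits inside a single $z$-component $D^{*}$. From $D \subseteq C_j \subseteq D^{*}$ and $D$ itself being a $z$-component one deduces $D = D^{*} = C_j$; moreover, for $D^{*}$ not to extend further through $\lk(x) \setminus \lk(z)$, every vertex of $N_j$ must be adjacent to $z$.

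Finally, fix $u \in U$ and invoke the defining condition of $B$ that $D = C_j$ is not a union of $u$-components. Since $C_j$ is connected and disjoint from $\st(u)$, it sits in a single $u$-component $E'$ strictly containing it, so any exit path leaves $C_j$ through a vertex $v \in N_j \setminus \st(u)$. Combining this with $N_j \subseteq \lk(z)$ gives $v \in \lk(z)$, so traversing from $z$ to $v$ and then to any $w \in C_j$ adjacent to $v$ produces a path entirely inside $\Gamma \setminus \st(u)$, witnessing that $z$ and $C_j = D$ lie in the same $u$-component. The most delicate point is recognising the two complementary constraints $N_j \subseteq \lk(z)$ (forced by $D$ being a $z$-component with $D \subseteq C_j$) and $v \in N_j \setminus \st(u)$ (supplied by $D$ not being a union of $u$-components), whose combination produces the bridging vertex $v$.
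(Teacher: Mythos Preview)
Your proof is correct and takes essentially the same approach as the paper: both establish $D \cap \st(x) = \emptyset$ when $x\notin D$, then show that the neighbours of $D$ in $\lk(x)$ (your $N_j$, the paper's $L$) lie in $\lk(z)$, and use this together with the $B$--condition to connect $D$ to $z$ outside $\st(u)$. The only differences are cosmetic: the paper argues by contradiction (if $\st(u)$ separated $D$ from $z$ then $L\subseteq\st(u)$, forcing $D$ to be a union of $u$--components), whereas you argue directly and also make the sharper observation that $D$ equals a single $C_j$ rather than merely a union of $x$--components.
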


\begin{proof}
	Suppose $x\notin D$.
	In this case $D\cap \st(x) = \emptyset$
	and any path from $D$ to $x$ intersects $\st(z)$.
	For contradiction, suppose that $D$ and $z$ are separated by $\st(u)$, for some $u\in U$.
	Since $u\leq x$  and $z \not \leq x$, we must have that $u$ is in a different $x$--component to $z$.
	Therefore $D$ cannot intersect the $x$--component containing $z$, since $\st(u) \subseteq \{u\} \cup \st(x)$.
	As the star of $z$ cannot intersect any $x$--component other than the one it is contained in, and as $D\cap\st(x) = \emptyset$, it is necessarily the case that $D$ is equal to a union of $x$--components.
	Let $L$ be the set of vertices in $\st(x)$ that are adjacent to a vertex of $D$.
	Now, any vertex in $L$ must be in $\st(z)$ as $\st(z)$ separates $D$ and $x$ by assumption.
	Meanwhile, $L$ must also be contained in $\st(u)$ as otherwise it gives a two-edge path from $D$ to $z$ avoiding $\st(u)$.
	In particular,	this implies that $D$ must be a union of $u$--components, contradicting $\pi^z_D \in B$.
\end{proof}

The following lemma says we can delete some $Z_i$ to make all partial conjugations in $B$ trivial, while keeping some other $Z_{i'}$ so that the action of the surviving partial conjugations $\pi^u_E$ on $Z_i$ is tracked via its action on $Z_{i'}$.

\begin{lem}\label{lem:killing in case 2}
	We can reorder the sets $Z_i$ so that we can apply a projection map 
	$$p_3 \colon \im  p \to \Out(A_{\Lambda})$$
	where $\Lambda$ is the subgraph of $\Lambda_0$ induced by the vertices 
	$$Z_0\cup\cdots \cup Z_s \cup Y \cup \{x\}$$
	for some $s\leq r$, so that the following hold:
	\begin{enumerate}
		\renewcommand{\theenumi}{(\alph{enumi})}
		\renewcommand\labelenumi{\theenumi}
		\item \label{item:case 2 killing 1} For each $i$ such that $s<i\leq r$ there is some $j$ with $1\leq j\leq s$ 
		such that $Z_i$ and $Z_{j}$ lie in the same $u$--component  for each $u\in U$.
		\item \label{item:case 2 killing 2} All partial conjugations $\pi^z_D \in B$ are sent to the identity.
		\item \label{item:case 2 killing 3}The image of $\pi^x_{C_0}$ under $p_3$ is non-trivial.
	\end{enumerate}
\end{lem}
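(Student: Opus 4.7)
The strategy is to select a subset of the equivalence classes $Z_1, \ldots, Z_r$ to retain, using the projection map $p_3$ to kill the rest. The retained classes, after relabelling, will be $Z_1, \ldots, Z_s$. I begin by defining an equivalence relation $\approx$ on $\{1, \ldots, r\}$ by $i \approx j$ iff $Z_i$ and $Z_j$ lie in the same $u$-component of $\Gamma$ for every $u \in U$. Retaining at most one $Z_i$ from each $\approx$-class immediately yields property~\ref{item:case 2 killing 1}: every deleted $Z_i$ is $\approx$-equivalent to some kept $Z_j$, and the two share all $u$-components by definition.

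Within each $\approx$-class, I will select the representative so as to achieve property~\ref{item:case 2 killing 2}. When the $\approx$-class contains some $Z_j$ dominated by $x$, I keep that one---it is a single vertex of $U$, and by definition of $B$ cannot serve as the multiplier of any $\pi^z_D \in B$ (since $B$ requires $z \not\leq x$). Otherwise I make an arbitrary choice, and argue that the images of $\pi^z_D \in B$ with $z$ in the kept $Z_j$ are trivial in $\Out(A_\Lambda)$. Lemma~\ref{lem:bad supports in y-cpnt} gives two cases for the support: either $x \in D$ or $D$ shares every $u$-component with $z$. In the second case, $D \cap \Lambda$ consists of vertices of $\Lambda$ in the same $u$-component as $z$ for every $u$, and by construction of $\Lambda$ these are confined to $Z_j$ itself; combined with an inner adjustment, the resulting partial conjugation is trivial. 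The first case is handled similarly, tracking which of the vertices in $Y \cup \{x\}$ end up in $D \cap \Lambda$ and showing the resulting support is either empty, or all of $\Lambda \setminus \st_\Lambda(z)$, up to an inner.

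For property~\ref{item:case 2 killing 3}, Lemma~\ref{lem:Ci not u cpnt} furnishes some $i \geq 1$ with $C_i$ not a $u$-component for any $u \in U$. In Case~\ref{case:not same y}, this class is separated from $Z_0$ by some $\st(u)$, hence $i \not\approx 0$, so at least one kept representative lies outside the $\approx$-class of $0$; this gives $s \geq 1$ with some $Z_j \subseteq C_j$ for $j \neq 0$, ensuring $\pi^x_{C_0}$ has proper non-trivial support in $\Lambda \setminus \st(x)$, and thus non-trivial image in $\Out(A_\Lambda)$.

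I expect the main obstacle to lie in property~\ref{item:case 2 killing 2}, specifically in ruling out the configuration where $D \cap \Lambda$ becomes a proper non-empty union of components of $\Lambda \setminus \st(z)$---which would leave a genuinely non-trivial partial conjugation in the image. The most delicate sub-case is when the kept $Z_j$ is a multi-vertex equivalence class and we encounter $\pi^z_D \in B$ with $x \in D$, since then sibling vertices of $z$ in $Z_j$ form their own isolated components of $\Gamma \setminus \st(z)$ that lie outside $D$. Resolving this may require a refinement of the representative-selection rule (for instance, preferring singleton $Z_j$'s when available) or a further combinatorial analysis leveraging the structure of $U$ and the fact that $\pi^z_D \in \im p$ in the first place.
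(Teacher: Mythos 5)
Your approach differs genuinely from the paper's, and you have correctly identified the place where it is incomplete. You partition the indices $\{1,\ldots,r\}$ into $\approx$-classes (same $u$--component for every $u\in U$) and keep exactly one representative per class. That certainly delivers \ref{item:case 2 killing 1} directly (every deleted $Z_i$ shares a class with a kept $Z_j$, $j\ge 1$, since in Case~\ref{case:not same y} no $Z_i$ not dominated by $x$ can be $\approx$ to $Z_0$), and \ref{item:case 2 killing 3} follows from Lemma~\ref{lem:Ci not u cpnt} as you say. The paper instead runs an iterative process: while some $\pi^z_D\in B$ is still alive, it \emph{deletes the class $Z_i$ containing the multiplier $z$} (which is legitimate because $z\not\leq x$ implies $Z_i$ is not dominated by any $Z_j$ or $u\in U$, cf.\ Lemma~\ref{lem:z_i leq z_j implies z_i leq x}), and then achieves \ref{item:case 2 killing 1} by following a chain of witnesses $Z_{i'}$ supplied by Lemma~\ref{lem:bad supports in y-cpnt}, using transitivity.

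The gap in your argument for \ref{item:case 2 killing 2} is real and lies exactly where you suspect. For a kept $Z_j$ and $\pi^z_D\in B$ with $z\in Z_j$ and $x\notin D$, Lemma~\ref{lem:bad supports in y-cpnt} forces $D$ to lie in the same $u$--component as $z$ for every $u$, so every $x$--component inside $D$ carries a $Z_{i'}$ with $i'\approx j$, which is deleted; hence $D\cap\Lambda=\emptyset$ and the image is trivial. But when $x\in D$, you pass to the complement $D'=\Gamma\setminus(\st(z)\cup D)$, a union of $z$--components $D_\ell$ with $x\notin D_\ell$. Those $D_\ell$ for which $\pi^z_{D_\ell}\in B$ again die, but a $D_\ell$ which \emph{is} a union of $u$--components is outside $B$, so Lemma~\ref{lem:bad supports in y-cpnt} gives you no control: such a $D_\ell$ may be separated from $z$ by $\st(u)$, the $x$--components inside it need not be $\approx j$, and they may contain kept classes $Z_{i'}$. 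In that case $D'\cap\Lambda\neq\emptyset$, so $\pi^z_D=(\pi^z_{D'})^{-1}$ has a proper non-trivial support in $\Lambda$ and is not killed. Your arbitrary choice of representatives does not preclude this. The paper avoids the issue altogether because deleting $Z_i$ kills \emph{every} $\pi^z_D$ with $z\in Z_i$ at once, whatever $D$ is; the only thing it then needs to track is the surviving witness, and that is exactly what \ref{item:case 2 killing 1} records. Were you to pursue your route, you would need a refined representative-selection rule (as you suggest) that ensures no kept multiplier carries an unkilled element of $B$ --- which in effect reconstructs the paper's iterative deletion.
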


\begin{proof}
	The process to choose the $Z_i$ that are deleted is iterative, and we define $p_3$ as a composition of projection maps, each one obtained by killing some $Z_i$.
	
	Begin by setting $B=B_0$, and $\psi_0 \colon \im p \to \Out(A_{\Lambda_0})$ the identity map.
	We suppose $\Lambda_n$ has been obtained from $\Lambda_0$ by deleting some of the sets $Z_i$
	and the projection map $\psi_n \colon \im p \to \Out(A_{\Lambda_n})$ is well-defined.
	Let $B_n$ be the set of partial conjugations $\pi^z_D \in B$ that are not in the kernel of $\psi_n$.
	We will define $\psi_{n+1}$ so that the corresponding set $B_{n+1}$ is strictly smaller than $B_n$.
	
	Choose any $Z_i$ so that $z\in Z_i$ admits a partial conjugation $\pi^z_D$ in $B_n$.
	Since ${\psi}_n(\pi^z_D)$ is non-trivial, $Z_i$ is in $\Lambda_n$ and, up to an inner automorphism, we may assume  $x\notin D$.
	Furthermore $D$ must contain some set $Z_{i'}$ that is in $\Lambda_n$.
	By Lemma~\ref{lem:bad supports in y-cpnt}, $Z_i$ and $Z_{i'}$ are in the same $u$--component for each $u\in U$.
	Since we are in Case~\ref{case:not same y}, we must have $i' \ne 0$.
	
	Since $Z_i$ is not dominated by $x$, it cannot be dominated by any $Z_j$ by Lemma~\ref{lem:z_i leq z_j implies z_i leq x}, or by any $u\in U$.
	Hence we may delete $Z_i$ from $\Lambda_n$ to get a new graph $\Lambda_{n+1}$
	and define a projection map $\im \psi_n \to \Out(A_{\Lambda_{n+1}})$
	(compare with Lemma~\ref{lem:kill Z not dominated}).
	Compose this projection map with $\psi_n$ to get $\psi_{n+1} \colon \im p \to \Out(A_{\Lambda_{n+1}})$.
	In particular the partial conjugation $\pi^z_D$ is in the kernel of $\psi_{n+1}$ so is not included in $B_{n+1}$.
	
	Stop this process when we reach $n$ with $B_n$ empty.
	Then we set $p_3 = \psi_n$.
	Clearly item~\ref{item:case 2 killing 2} holds, by construction.
	
	For item~\ref{item:case 2 killing 1}, the construction yields, for each $i>s$ some $i'$ so that $Z_i$ and $Z_{i'}$ are in the same $u$--component for each $u\in U$.
	It may be that $i' > s$ (and so deleted to form $\Lambda$) but then we have some $i''$ so that $Z_i$ and $Z_{i''}$ are in the same $u$--component for each $u\in U$. 
	We can repeat this until, after finitely many steps, we find the required set $Z_j$ that is in $\Lambda$.
		
	Finally, for point~\ref{item:case 2 killing 3}, the image of $\pi^x_{C_0}$ would be trivial only if all components $Z_i$ have been killed except $Z_0$.
	This cannot happen because after each $Z_i$ is killed, there must be some $Z_{i'}$ left surviving, with $i' \ne 0$.
	This implies $s>0$.
\end{proof}

To conclude, in case~\ref{case:not same y} we define the homomorphism $q$ to be the composition
$$q = p_3 \circ p \colon \Sout^0(A_\Gamma) \to \Out(A_\Lambda).$$

\begin{ex}\label{ex-case3ctd}
	We continue Example~\ref{ex-case3}.
	This graph gives no partial conjugations in the set $B$, so the map $p_3$ is taken as the identity.
	We have
	 $$A_{\Lambda} \cong \Z^{c_0} \ast \Z^{c_1} \ast \Z^{c_2} \ast \Z^{c_3} \ast \Z^3$$	 
	 (where we use $c_i$ for the size of an equivalence class that can optionally be put in place of $z_i$).
\end{ex}

\begin{ex}\label{ex-case2ctd}
	We continue Example~\ref{ex-case2}.
	In this case the set $B$ is non-empty, and includes partial conjugations with multipliers $z_2,z_4$ and $z_5$. In particular, $B$ consists of
	\begin{itemize}
		\item $\pi^{z_2}_{C_1}$,	
		\item $\pi^{z_2}_{D}$ where $D$ contains $C_0,C_3,C_4,C_5$ and some vertices from $\st(x)$,
		\item $\pi^{z_4}_{C_5}$, 
		\item $\pi^{z_4}_E$ where $E$ contains $C_0,C_1,C_2,C_3$ and some vertices from $\st(x)$,
		\item $\pi^{z_5}_{C_4}$, and
		\item $\pi^{z_5}_{E}$.
	\end{itemize}
	Note that there are also partial conjugations by $z_2$ that acts on the vertices of $C_2 \setminus \st(z_2)$, and similarly for $z_4$ and $z_5$. However none of these are in the image of $p$.

	We can define $\psi_1$ by killing $z_2$. This will leave $B_1$ to contain the partial conjugations with multiplier $z_4$ or $z_5$.
	Then we define $\psi_2$ by also killing $z_4$.
	Immediately we see that the partial conjugations with multiplier $z_4$ are in the kernel,  
	but it also contains $\pi^{z_5}_{C_4}$.
	Finally, $\pi^{z_5}_{E}$ has becomes inner.
		
	We are left with $\Lambda$ consisting of vertices $x,y_1,y_2, z_0, z_1,z_3,z_5$,
	and $$A_{\Lambda} \cong \Z^{c_0} \ast \Z^{c_1} \ast \Z^{c_3} \ast \Z^{c_5} \ast \Z^3$$
	(where we use $c_i$ for the size of an equivalence class optionally put in place of $z_i$).
\end{ex}

\subsection{Applying the homological representation}\label{sec:virt ind image}

We compose the map $q$ from Section~\ref{sec:q} with the representation $\rho_\pi$ of Section~\ref{sec:VI homo rep} to get
$$
\varphi = \rho_\pi \circ q \colon \Out_\pi(A_\Gamma) \to \PGL(V_{-1}).
$$
More specifically, we use the restriction of $q$ to the finite index subgroup $\Out_\pi(A_\Gamma)$ that is the pre-image of $\Out_\pi(A_\Lambda)$.

The action of $\Aut_\pi(A_\Gamma)$ on $H_1(T;\Q)$ preserves the lattice $H_1(T;\Z)$,
so a finite index subgroup $\Aut_{\pi,\Z}(A_\Gamma)$ of $\Aut_\pi(A_\Gamma)$ preserves the lattice 
$\Z \x \oplus \Z \y_1 \oplus \cdots \oplus \Z\y_k \oplus \Z \z_1 \oplus \cdots \oplus \Z \z_s$
in $V_{-1}$.
Restricting $\varphi$ to the image of this subgroup in $\Out(A_\Gamma)$  we therefore get
$$
\varphi \colon \Out_{\pi,\Z}(A_\Gamma) \to \PGL(d+s,\Z)
$$
where $\Out_{\pi,\Z}(A_\Gamma)$ has finite index in $\Out(A_\Gamma)$.
We now verify that in each case the image of $\varphi$ is virtually indicable, implying the same of $\Aut(A_\Gamma)$.

\subsubsection{Case~\ref{case:same y}}

We have done nearly all the work needed to obtain a map onto $\Z$ from a finite index subgroup of $\Out(A_\Gamma)$ in this case.
Indeed, $V_{-1}$ has dimension 2, and the image of the partial conjugation $\pi^x_{C_0}$ is an infinite order element.
It follows that the image of $\Out_{\pi,\Z}(A_\Gamma)$ under $\varphi$ in this case is an infinite subgroup of a virtually free group.
Hence it is virtually free itself (possibly virtually $\Z$) and in particular we can obtain a map onto $\Z$ from a finite index subgroup of $\Out(A_\Gamma)$.
This proves Theorem~\ref{thm:no T special} in case~\ref{case:same y}.

\subsubsection{Case~\ref{case:not same y}}

Since the dimension of $V_{-1}$ is not necessarily 2 in this case, we need to use a more involved strategy to get a map onto $\Z$.

We claim that we can restrict everything in the image of $\varphi$ to a subspace of $V_{-1}$ so that the image is free abelian.
We begin with an example.

\begin{ex}
	We continue Example~\ref{ex-case3} (see also Example~\ref{ex-case3ctd}).
	We would like to restrict to some vectors $\z_i$ and $\x$, however 
	we have partial conjugations $\pi^{y_1}_{C_1\cup C_2}$ and $\pi^{y_2}_{C_2 \cup C_3}$
	which prevent us from (naively) doing this.
	These partial conjugations act as follows.
	$$	
	\begin{array}{lll}
	\varphi(\pi^{y_1}_{C_1\cup C_2}) ( \z_1 ) = \z_1 + 2\y_1 , & \varphi(\pi^{y_1}_{C_1\cup C_2}) ( \z_2 ) = \z_2 + 2\y_1, & \varphi(\pi^{y_1}_{C_1\cup C_2}) ( \z_3 ) = \z_3,  \\	
	\varphi(\pi^{y_2}_{C_2\cup C_3}) ( \z_1 ) = \z_1 & \varphi(\pi^{y_2}_{C_2\cup C_3}) ( \z_2 ) = \z_2 + 2\y_2 , & \varphi(\pi^{y_2}_{C_2\cup C_3}) ( \z_3 ) = \z_3 + 2\y_2.
	\end{array}
	$$
	Notice though that $\z = \z_1 - \z_2 + \z_3$ is fixed by both partial conjugations.
	In this example, we can restrict the action to the subspace $V$ spanned by $\z$ and $\x$.
	The partial conjugation $\pi^x_{C_0}$ acts as a transvection $\z \mapsto \z-2\x$, so the image of the representation in $\PGL(V) \cong \PGL(2,\Q)$ has infinite order in $\PGL(2,\Z)$.
	This is sufficient to imply virtual indicability of $\Out(A_\Gamma)$.	
\end{ex}

Consider the subspace $W$ spanned by the vectors $\z_i$ for $i=1,\ldots , s$.
As in Lemma~\ref{lem:span principal}, we associate to each partial conjugation $\pi^{u}_D$, with $u\in U$, a vector $\w_D$ in $W$. 
First, after multiplying by an inner automorphism we may assume that $C_0 \not\subset D$.
We define the vector $\w_D = \langle w_1 , \ldots , w_s \rangle$ in $W$ by
$$w_i = 
\begin{cases} 1 & \textrm{if $C_i \subseteq D$,} \\
0 & \textrm{otherwise.} 
\end{cases}$$
This gives us a set of vectors $\Pi$ defined as
$$\Pi = \{ \w_D \mid \textrm{$D$ is a $u$--component for some $u\in U$}\}.$$
As in Lemma~\ref{lem:span principal}, we can determine whether $(X,{C_0})$ is principal by inspecting the span of the set $\Pi$.
This requires more than a simple application of Lemma~\ref{lem:span principal} since we can only use $W$ to (immediately) see the effect on components $C_1,\ldots,C_s$. 
We need to ensure that the effect on the ``lost'' components $C_{s+1},\ldots, C_r$ can still be discerned.

\begin{lem}\label{lem:span 1}
	The vector $\langle 1 ,1 ,\ldots,1\rangle $ is in $\Span_\Q(\Pi)$  if and only if $(X,{C_0})$ is not principal.
\end{lem}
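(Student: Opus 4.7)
The plan is to leverage Lemma~\ref{lem:span principal}, transferring the equivalence from full-length vectors (living in $\Q^r$) to the truncated vectors in $\Pi$ (living in $\Q^s$). Since $[x]=\{x\}$, the vertices dominated by but not equal to $x$ are exactly the elements of $U$, so Lemma~\ref{lem:span principal} applied to $X=\{x\}$ and $C_0$ says that $(X,C_0)$ is non-principal if and only if $\langle 1,\ldots,1\rangle\in\Q^r$ lies in the $\Q$-span of the set $\Delta\subseteq\Q^r$ of full-length vectors $\w_D^{\mathrm{full}}=(w_1,\ldots,w_r)$, with $w_i=1$ when $C_i\subseteq D$ and $0$ otherwise, where $D$ ranges over $u$-components for $u\in U$ (chosen, after an inner, so that $C_0\not\subseteq D$). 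The task is to identify the span condition on $\Pi\subseteq\Q^s$ with this span condition on $\Delta\subseteq\Q^r$.

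The key observation is that each full vector $\w_D^{\mathrm{full}}$ is determined by its truncation $\w_D$ via a single linear map, independent of $D$. First, for every $u\in U$ and every $x$-component $C_i$, $C_i$ is entirely contained in a single $u$-component: case analysis on whether $u\in\st(x)$ combined with Remark~\ref{rem:domination} gives $\st(u)\subseteq\st(x)$ when $u\in\st(x)$, and $\lk(u)\subseteq\st(x)$ otherwise, so in either case $\st(u)\cap C_i=\emptyset$ whenever $u\notin C_i$, and the connectedness of $C_i$ places it within a unique $u$-component. Next, Lemma~\ref{lem:killing in case 2}\ref{item:case 2 killing 1} assigns to each $i$ with $s<i\leq r$ an index $j(i)\leq s$ such that $C_i$ and $C_{j(i)}$ occupy the same $u$-component for every $u\in U$; consequently, for every $u$-component $D$, one has $C_i\subseteq D$ if and only if $C_{j(i)}\subseteq D$, i.e.\ the $i$-th coordinate of $\w_D^{\mathrm{full}}$ equals its $j(i)$-th coordinate. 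Fix such a choice of $j$. The linear map $\iota\colon\Q^s\to\Q^r$ defined by $\iota(v)_i=v_i$ for $i\leq s$ and $\iota(v)_i=v_{j(i)}$ for $i>s$ then satisfies $\iota(\w_D)=\w_D^{\mathrm{full}}$ for every $D$, and sends $\langle 1,\ldots,1\rangle\in\Q^s$ to $\langle 1,\ldots,1\rangle\in\Q^r$.

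The equivalence now drops out from a two-way application of $\iota$ and its left inverse $\pi\colon\Q^r\to\Q^s$, the projection onto the first $s$ coordinates, which sends each $\w_D^{\mathrm{full}}$ to $\w_D$. If $\langle 1,\ldots,1\rangle=\sum_i a_i\w_{D_i}\in\Span_\Q(\Pi)$, then applying $\iota$ yields $\langle 1,\ldots,1\rangle=\sum_i a_i\w_{D_i}^{\mathrm{full}}\in\Span_\Q(\Delta)$, and Lemma~\ref{lem:span principal} gives non-principality of $(X,C_0)$. Conversely, if $(X,C_0)$ is non-principal, Lemma~\ref{lem:span principal} produces an expression $\langle 1,\ldots,1\rangle=\sum_i a_i\w_{D_i}^{\mathrm{full}}\in\Span_\Q(\Delta)$, and projecting via $\pi$ gives $\langle 1,\ldots,1\rangle=\sum_i a_i\w_{D_i}\in\Span_\Q(\Pi)$. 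The only real care needed is the case analysis showing that each $C_i$ lies in a single $u$-component, which is what allows the tracking of Lemma~\ref{lem:killing in case 2}\ref{item:case 2 killing 1} (phrased for the sets $Z_i\subseteq C_i$) to be read off directly on the $x$-components, and hence on the entries of $\w_D^{\mathrm{full}}$.
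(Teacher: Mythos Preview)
Your proof is correct and follows essentially the same route as the paper. Both arguments reduce to Lemma~\ref{lem:span principal} together with Lemma~\ref{lem:killing in case 2}\ref{item:case 2 killing 1}: the paper handles the ``only if'' direction by directly reconstructing a product of partial conjugations and using the tracking in Lemma~\ref{lem:killing in case 2}\ref{item:case 2 killing 1} to extend the exponent data from $C_1,\ldots,C_s$ to the deleted components $C_{s+1},\ldots,C_r$, while you encode this same extension as the linear map $\iota$ and then invoke Lemma~\ref{lem:span principal}. One small point: your claim that each $C_i$ lies in a single $u$--component is argued only when $u\notin C_i$; the remaining case $u\in C_i$ forces $C_i=\{u\}$ (since $\lk(u)\subseteq\st(x)$), so $C_i$ lies in no $u$--component and the relevant coordinate is zero on both sides---this is harmless but worth stating.
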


\begin{proof}
	The ``if'' direction follows from Lemma~\ref{lem:span principal}.
	
	As in the proof of Lemma~\ref{lem:span principal},
	consider a product of partial conjugations with multiplier $x$ and supports $D_i$ that are $u$--components for some $u\in U$.
	We write the product as
	$$
	(\pi^x_{D_1})^{\varepsilon_1} \cdots (\pi^x_{D_l})^{\varepsilon_l}
	$$
Up to an inner automorphism, and flipping the sign of $\varepsilon_i$, we may assume $C_0 \not\subset D_i$, and we then take the vector $\w_{D_i}\in\Pi$ defined above.	
	For $z\in C_1\cup\cdots\cup C_s$, we have 
	\begin{equation}\label{eq:product of pcs}
	(\pi^x_{D_1})^{\varepsilon_1} \cdots (\pi^x_{D_l})^{\varepsilon_l} (z)
	=
	x^{-\alpha_j} z x^{\alpha_j}
	\end{equation}
	where 
	$$\alpha_j 
	= \sum\limits_{\{ i \mid C_j \subset D_i\}} \varepsilon_i
	\ \
	\textrm{ and }
	\ \
	\langle \alpha_1,\ldots,\alpha_s \rangle = \sum\limits_{i=1}^l \varepsilon_i \w_{D_i}.$$
	The entry $\alpha_j$ records the effect on the component $C_j$,
	however we have not recorded the effect on each component $C_i$ for $i>s$.
	But by Lemma~\ref{lem:killing in case 2}, if we have deleted a class $Z_i$ from $\Gamma$ to create $\Lambda$, then there is some $Z_j$ such that 
	$Z_i$ and $Z_j$ are in the same component of $\Gamma\setminus \st(u)$ for each $u\in U$
	and $Z_j$ survives the trip down to $\Lambda$.
	We can therefore define $\alpha_i$ by using $\alpha_j$.
	Since we are in case~\ref{case:not same y}, we necessarily have $j \ne 0$.
	We then set $\alpha_i = \alpha_j$,
	and equation~\eqref{eq:product of pcs} extends to $z\in C_j$ for $1\leq j \leq r$.
	
	Hence, if $\langle 1, 1, \dots , 1\rangle  \in \Span_\Q (\Pi)$ then we have a product of such partial conjugations that is equal to $(\pi^x_{C_1\cup \cdots C_r})^n = (\pi^x_{C_0})^{-n}$ for some integer $n$, implying $(X,{C_0})$ is not principal.
\end{proof}

Before completing the proof of Theorem~\ref{thm:no T special} we look at one final example.

\begin{ex}
	We continue Example~\ref{ex-case2} (see also Example~\ref{ex-case2ctd}).
	We deleted vertices $z_2$ and $z_4$, so $W$ is $3$--dimensional, with basis $\z_1,\z_3,\z_5$.
	The set $\Pi$ will consist of vectors
	$\z_1 + \z_3 = \langle 1,1,0\rangle$ from $y_1$ and
	$\z_3 + \z_5 = \langle 0,1,1\rangle$ from $y_2$.
	The vector $\v=\z_1 - \z_3+\z_5 = \langle 1,-1,1\rangle $ is in $\Pi^\perp$, and indeed $\Pi^\perp = \Span_\Q(\v)$.
	
	Let $V = \Span_\Q(\v,\x)$.
	By direct calculations, or using Lemma~\ref{lem:inner product}, we see that $\pi^{y_1}_{C_1\cup C_2\cup C_3}$ acts trivially on $\v$ (and $\x$) so preserves $V$.
	The same is true of $\pi^{y_2}_{C_3\cup C_4\cup C_5}$.
	The partial conjugations by $z_2$ and $z_4$ were killed by the map $q$ since those vertices were killed, while the partial conjugation by $z_5$ was also killed because its support was $C_4$.
	The remaining partial conjugations are those with multiplier $x$, which preserve $V$:
	$$\varphi (\pi^x_{C_1}) (\v) = \v +2\x, \ \ 
	\varphi (\pi^x_{C_3}) (\v) = \v-2\x,$$
	$$\varphi (\pi^x_{C_5}) (\v) = \v+2\x, \ \ 
	\varphi (\pi^x_{C_0}) (\v) = \v+2\x.$$
	The other partial conjugations act trivially.
	The only transvections are by $x$ on $y_i$, both of which act trivially on $V$.
	We conclude that the image of $\varphi$ is $\Z$.
\end{ex}

We now complete the proof of Theorem~\ref{thm:no T special}.

Consider an automorphism in $\Aut_\pi(A_\Gamma)$. 
It can be written as $R \pi$, where 
$\pi$ is a product of partial conjugations and 
$R$ a product of transvections. 
This follows from the relations in Lemma~\ref{lem:conjugates of pc by transv}, writing the automorphism as a product of partial conjugations, transvections, and their inverses,
and then shuffling the partial conjugations (and their inverses) to the right.
Each partial conjugation is in $\Aut_\pi(A_\Gamma)$, so we can assume $R$ is too.

Since $(X,C_0)$ is not principal, Lemma~\ref{lem:span 1} tells us the span of $\Pi$ does not include $\langle 1,1,\ldots,1\rangle $.
	Its orthogonal complement is therefore non-trivial and we can
	define $V$ to be the subspace
	$$V=\Span_\Q(\Pi^\perp \cup \{ \x \} ).$$
	We claim that $V$ (more accurately, the associated projective space) is fixed by all partial conjugations except those with multiplier $x$, and is fixed by any product of transvections in $\Aut_\pi(A_\Gamma)$ (in particular, by $R$).
	
	For any $\v \in \Pi^\perp$, by Lemma~\ref{lem:inner product}, we get $\varphi(\pi^a_D)(\v) = \v$ for any $a\leq x$, $a\ne x$ and corresponding components $D$. 
	The vector $\x$ is sent ot $-\x$,
	so these partial conjugations fix the projective space associated to $V$.
	The remaining partial conjugations are those with multiplier in $z \in Z_i$ for some $i$.
	If $\pi^z_D$ is in the set $B$, then it is in the kernel of $\varphi$ by construction---either it is in the kernel of $p_3$ or it is mapped to an inner automorphism by $p_3$, which is then in the kernel of $\rho_\pi$.
	If it is  not in $B$, or not otherwise killed by $q$, then
	$1\leq i \leq s$ and $D$ is a union of $u$--components for some $u\in U$. 
	In particular $\w_D \in \Span_\Q(\Pi)$, so $\varphi(\pi^a_D)(\v) = \v$ for any $\v \in \Pi^\perp$.
	As previously, if $x\in D$, or not, the vector $\x$ is fixed.

	As for transvections, we claim that $R$ acts trivially on $V$.
	Consider $R_a^b$.
	By Lemma~\ref{lem:Zi dominates x implies case 1}, we have $a\in Y  \cup Z_0 \cup \cdots \cup Z_r$.
	If $a \in Y$ then $R_a^b$ acts trivially on $V$.
	So we assume $a \in Z_0 \cup \cdots \cup Z_r$.
	
	We will make use of the following observation.
			
\begin{lem}\label{lem:z<x z Pi}
	If $Z_i = \{z_i\}$ is dominated by $x$ then either 
	\begin{itemize}
		\item $i>0$ and $\z_i$ is in $\Pi$, or
		\item there is no vertex $u$ such that $z_i \leq u \leq x$.
	\end{itemize} 
	
	In particular, if either $i=0$ or $i>0$ and $\z_i \notin \Pi$ then we get a finite-index subgroup of $\Aut(A_\Gamma)$ mapping onto $\Z$.
\end{lem}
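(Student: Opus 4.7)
The plan is to show the dichotomy by assuming the second bullet fails (so some intermediate dominator $u$ of $z_i$ exists, strictly between $z_i$ and $x$) and deriving the first bullet. The ``in particular'' clause is then immediate from Theorem~\ref{thms:AMP}: failure of the first bullet forces the second, which is exactly the failure of condition~\ref{A} on the pair $(z_i,x)$.

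So assume there is a vertex $u$ with $z_i \leq u \leq x$ and $u \notin \{z_i, x\}$. Then $u \in U$. I will show, splitting on whether $u$ is adjacent to $x$, that $C_i = Z_i = \{z_i\}$ is a $u$-component. If $u \notin \st(x)$, then $u$ is dominated by $x$ without being adjacent to it, so $\lk(u)\subseteq\lk(x)$ by Remark~\ref{rem:domination}; hence $u$ forms its own $x$-component and equals $z_j$ for some $j$, at which point Lemma~\ref{lem:z_i leq z_j implies z_i leq x} gives that $C_i = Z_i$ is a $z_j$-component. If instead $u \in \st(x)$, then $u \in Y$ and $\st(u)\subseteq \st(x)$; since $z_i \notin \st(x)$, $z_i$ cannot be adjacent to $u$ (otherwise Remark~\ref{rem:domination} with $z_i \leq u$ would place $z_i \in \st(u)\subseteq \st(x)$), so by Remark~\ref{rem:domination} again $\lk(z_i)\subseteq \lk(u)\subseteq \st(u)$, which makes $\{z_i\}$ a $u$-component.

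In both cases $C_i$ is a $u$-component for some $u\in U$. If $i=0$, this contradicts Lemma~\ref{lem:Z0 not domianted by x}, which forbids $C_0$ from being a $u$-component for any $u\in U$ (using principality of $(X,C_0)$). So $i>0$. Since $C_0 \not\subseteq C_i$ (as distinct $x$-components), the defining vector $\w_{C_i}$ satisfies $w_j = \delta_{ij}$, giving $\w_{C_i} = \z_i \in \Pi$.

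For the ``in particular'' clause, if $i=0$, or $i>0$ with $\z_i \notin \Pi$, the first bullet fails, so by the dichotomy no vertex $u$ with $z_i < u < x$ exists; this is precisely the failure of condition~\ref{A} on the pair $(z_i,x)$, and Theorem~\ref{thms:AMP} then supplies the desired virtual surjection of $\Aut(A_\Gamma)$ onto $\Z$. The main subtlety (mild, but the only place where one could slip) is verifying that $\{z_i\}$ really is isolated in $\Gamma\setminus\st(u)$ in the case $u\in Y$, which requires ruling out adjacency of $z_i$ to $u$ using $\st(u)\subseteq\st(x)$ together with $z_i\notin\st(x)$.
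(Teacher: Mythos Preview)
The proposal is correct and follows essentially the same route as the paper's proof. The paper asserts in one line that if $z_i$ is dominated by some $u\in U$ then $Z_i$ is a $u$--component; you unpack this by splitting on whether $u$ lies in $\st(x)$, and you organise the argument a bit more cleanly by establishing the dichotomy first and only then invoking Theorem~\ref{thms:AMP} for the ``in particular'' clause. One small point you use without comment is that $C_i=\{z_i\}$ under the hypothesis $z_i\leq x$; this holds since $\lk(z_i)\subseteq\lk(x)\subseteq\st(x)$ forces $z_i$ to have no neighbour in its own $x$--component.
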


\begin{proof}
	The case when $i=0$ is dealt with by Lemma~\ref{lem:Z0 not domianted by x}, which implies
	condition~\ref{A} of \cite{Aramayona-MartinezPerez}, so the virtual surjection to $\Z$ follows from Theorem~\ref{thms:AMP}.	
	Now suppose $i>0$.
	If $z_i$ is dominated by any vertex $u\in U$ then $Z_i$ is a $u$--component, so $\z_i \in \Pi$.
	Hence we may assume that no vertex dominated by $x$ dominates $z_i$.
	This is again condition~\ref{A}.
\end{proof}

In light of Lemma~\ref{lem:z<x z Pi}, whenever $Z_i$ is dominated by $x$, we can assume $i>0$ and that $\z_i \in \Pi$.
We know from Lemma~\ref{lem:z_i leq z_j implies z_i leq x} that if $R_a^b$ is a transvection with $a\in Z_i$ and $b\in Z_j$ with $j\ne i$, then $a\leq x$.
Thus, unless $a$ and $b$ are both in $Z_i$, we must have that $a$ is dominated by $x$.
Then $\{a\} = Z_i$ for some $i$ and $\z_i \in \Pi$.
Thus if we take a vector $\v$ in $\Pi^\perp$ the $\z_i$--entry is zero for any $i$ for which there is a transvection $R_a^b$, with $a\in Z_i$, $b\notin Z_i$.
We therefore focus on the effect of $R$ on the entries of $\v$ that correspond to basis vectors $\z_i$ where $Z_i$ is not dominated by $x$.
The only way $R$ can act on $a \in Z_i$ is by sending it to an element of $\langle Z_i\rangle $, and furthermore, in order to be in $\Aut_\pi(A_\Gamma)$, $R(a)$ must be representable by a word of odd length on $Z_i \cup Z_i^{-1}$.
Thus $\varphi(R)$ sends $(1-g)e_{z_i}$ to $(1-g)(\kappa(1+g)e_{z_i} + e_{z_i}) = (1-g)e_{z_i}$ for some integer $\kappa$, and so $\z_i$ is fixed.
It follows that $R$ fixes each $\v \in \Pi^\perp$ and acts trivially on $V$.

To conclude, 
we can restrict to $V$ and get a new representation
$$\hat{\varphi} \colon \Out_{\pi,\Z}(A_\Gamma) \to \PGL(V).$$
The image of $\hat{\varphi}$ will be generated by the image of the partial conjugations $\pi^x_{C_i}$.
The image will therefore be a free abelian group of rank equal to $\dim(\Pi^\perp)$,
completing the proof of Theorem~\ref{thm:no T special}.

\section{Splitting the standard representation when there is no SIL}\label{sec:standard rep}

As defined in Section~\ref{sec:prelim:standard rep}, the standard representation of $\Out(A_\Gamma)$ is obtained by acting on the abelianisation of $A_\Gamma$ and we denote it by
$$\rho \colon \Out(A_\Gamma) \to \GL(n,\Z)$$
where $n$ is equal to the number of vertices in $\Gamma$.
We have a short exact sequence
\begin{equation}\label{eq:ses}
1 \to \IA_\Gamma \to \Sout^0(A_\Gamma) \to Q \to 1
\end{equation}
where $Q$ is a subgroup of $\SL(n,\Z)$.
The objective of this section is to show that under the assumption of no SIL, this short exact sequence splits, proving Proposition~\ref{propspecial}.

The structure of $Q$ is well-understood: it is (after conjugating $Q$ by a suitable permutation matrix) a block triangular matrix group.
This can be seen as follows.

\begin{conv}\label{convention vertex labelling}
	Enumerate the vertices of $\Gamma$ as $v_1,v_2,\ldots,v_n$ in such a way so that if $v_i \leq v_j$ then either $i\leq j$ or $v_i$ is equivalent to $v_j$, and so that equivalence classes of vertices are adjacent in this ordering.
\end{conv}

Under $\rho$, transvections map to elementary matrices.
We denote the image of $R_u^v$ by $E_u^v$; if $u=v_i$ and $v=v_j$, then $E_u^v = E_{ji}$, the matrix that differs from the identity by a $1$ in the $(j,i)$--entry.

Since the equivalence classes form clusters in this order,
we obtain a block structure in $Q$.
The blocks correspond to equivalence classes, and each diagonal block consists of matrices from $\SL(k,\Z)$, where $k$ is equal to the number of vertices in the corresponding equivalence class.
Matrices in $Q$ are lower block triangular by choice of the ordering on the vertices from Convention~\ref{convention vertex labelling} and the fact that $Q$ is generated by the set of elementary matrices $E_{ji}$ when $v_i\leq v_j$.

We now prove Proposition~\ref{propspecial}, determining that the short exact sequence \eqref{eq:ses} splits when there is no SIL in $\Gamma$.

\begin{proof}[Proof of Propsotion~\ref{propspecial}]
	Since $\Sout^0(A_\Gamma)$ is generated by partial conjugations and transvections and the kernel $\IA_\Gamma$ is generated by the partial conjugations, we know that $Q$ is generated by the image of the transvections, namely
	$$\{E_u^v \mid u\leq v\}.$$
	Using the matrix structure of $Q$ we get the following set of defining relators (see \cite[Proposition 4.11]{WadeThesis} for details): for $u\leq v\leq w$, and $x\leq y$ 
	\begin{enumerate}
		\renewcommand{\theenumi}{(\Alph{enumi})}
			\renewcommand\labelenumi{\theenumi}
		\item \label{splitrelator1} $[E_u^v , E_v^w] = E_u^w$ if $u\ne w$,
		\item \label{splitrelator2} $[E_u^v , E_x^y] = 1$ if $u\ne y$ and $v\ne x$,
		\item \label{splitrelator3} $(E_u^v(E_v^u)^{-1}E_u^v)^4 = 1$ if $u\ne v$ and $v\leq u$,
		\item \label{splitrelator4} $E_u^v (E_v^u)\m E_u^v E_v^u (E_u^v)\m E_v^u$ if $\{u,v\}$ is an equivalence class of size 2.
	\end{enumerate}
	To see the short exact sequence splits, define $\sigma \colon Q \to \Sout^0(A_\Gamma)$ by sending each $E_u^v$ to $R_u^v$.
	We need to check the four relators hold in the image of $\sigma$.
	
	Since $u\leq v\leq w$, Lemma~\ref{lem:domination pairs} implies $[v,w]=1$. Then direct calculation, left to the reader, verifies the relation $[R_u^v,R_v^w] = R_u^w$, so~\ref{splitrelator1} holds.
	
	For~\ref{splitrelator2}, if $u=x$ then $[v,y]=1$ by Lemma~\ref{lem:domination pairs}, and so $R_u^v$ and $R_u^y$ commute, as required.
	If $u\ne x$, then since also $u\neq y$ and $v \neq x$, the supports of $R_u^v$ and $R_x^y$ are disjoint and do not contain the multipliers. It follows that the transvections again commute.
	
	Finally, for both~\ref{splitrelator3} and~\ref{splitrelator4}, $u$ and $v$ are in the same equivalence class in $\Gamma$.
	Either this class has size at least 3, and so is abelian (since a non-abelian equivalence class of size at least 3 gives a SIL), or the equivalence class has size 2.
	We claim that in either case the subgroup $\langle R_u^v,R_v^u\rangle$ embeds into a copy of $\SL(n,\Z)$, where $n$ is the number of vertices in the equivalence class.
	
	Let $A$ denote the subgroup of $A_\Gamma$ generated by the equivalence class containing $u$ and $v$.
	Since $R_u^v$ and $R_v^u$ preserve the kernel of the projection map $\kappa\colon A_\Gamma \to A$ obtained by killing all vertices of $\Gamma$ not in this class
	we can define the factor map
	$$f \colon \langle R_u^v,R_v^u\rangle \to \Sout^0 ( A )$$
	so that $f(\Phi)(g) = \kappa(\Phi(g))$ for $\Phi \in \langle R_u^v,R_v^u\rangle$ and $g\in A$.
	It is clear that $\Phi(g)$ is in $A$, up to conjugacy, and thus $f(\Phi)(g) = \Phi(g)$.
	Thus $\Phi$ is in the kernel of $f$ only if $\Phi$ acts as an inner automorphism on $A$.
	If $A$ is abelian this is not possible for nontrivial $\Phi \in \langle R_u^v, R_v^u \rangle$, so $f$ is an embedding into $\Sout^0(A) \cong \SL(n,\Z)$ as claimed.
	On the other hand, if $A$ is not abelian, it must be non-abelian free of rank 2.
	In this case, using the fact that $\Gamma$ has no SIL, we must have that $A_\Gamma$ splits as a direct product $A \times B$.
	Indeed, $B$ is generated by the vertices of $\Gamma$ different to $u$ or $v$, and if any such vertex $x$ was not adjacent to $u$ and $v$ then we would obtain a SIL $(u,v\mids x)$.
	In particular, if $f(\Phi)$ is inner on $A$, then $\Phi$ must have been inner on $A_\Gamma$.
	Thus $f$ is an embedding into $\Sout^0(A) \cong \SL(2,\Z)$.
	
	With this claim, and since $f$ maps $R_u^v$ and $R_v^u$ to elementary matrices in $\SL(n,\Z)$, the relations \ref{splitrelator3} and~\ref{splitrelator4} (the latter when $n=2$) hold in $\SL(n,\Z)$ and hence also in $\langle R_u^v,R_v^u\rangle$ as required. 
\end{proof}

\begin{rem}\label{rem:split no sil} 
	We note that the short exact sequence may still split in cases when $\Gamma$ does have a SIL.
	For example, as long as $\Gamma$ has no SIL of the form  $(x,y\mids z)$ with $z\leq x,y$, then if all its equivalence classes are abelian, the sequence will still be split.
	This can be seen from the above proof, since the no SIL condition was used for three reasons. 
	One was in application of Lemma~\ref{lem:domination pairs}, which just requires the absence of the above type of SIL;
	a second was in deducing that equivalence classes of size at least three are abelian;
	and thirdly in the situation when we had a non-abelian equivalence class generating $A$.
\end{rem}

\section{Property (T) when there is no SIL}\label{sec:T}

In this section we show that, for a graph $\Gamma$ with no SIL, if Theorems~\ref{thms:AMP} and~\ref{thm:no T special} do not imply virtual indicability, then the outer automorphism group $\Out(A_\Gamma)$ has property (T).
This results in Theorem~\ref{thms:T}

For background  material concerning property~(T), we refer the reader to the book by Bekka, de la Harpe, and Valette \cite{PropT}.
Some key facts regarding property~(T) that we rely on are the following: 
\begin{itemize}
\item it passes to and from finite index subgroups \cite[Theorem~1.7.1]{PropT};
\item it is passed to quotients \cite[Theorem~1.3.4]{PropT};
\item and it is stable under short-exact sequences \cite[Proposition~1.7.6]{PropT}.
\end{itemize}
The following is central to our method.

\begin{lem}\label{lem:putting the pieces together}
	Suppose $H_1,\ldots,H_s$ are normal subgroups of $G$ and each has property~(T).
	Then $\langle H_1,\ldots , H_s\rangle$ has property~(T).
\end{lem}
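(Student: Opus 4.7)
The plan is to proceed by induction on $s$, reducing the statement to the case $s=2$ and then invoking the standard closure properties of property~(T) listed just before the lemma.

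For the base case $s=2$, I would argue as follows. Since $H_1$ is normal in $G$, it is in particular normal in the subgroup $\langle H_1, H_2\rangle$, which equals the product $H_1 H_2$ (again because $H_1$ is normal, so $H_2 H_1 = H_1 H_2$). This gives a short exact sequence
\begin{equation*}
1 \to H_1 \to H_1 H_2 \to (H_1 H_2)/H_1 \to 1.
\end{equation*}
The quotient $(H_1 H_2)/H_1$ is isomorphic to $H_2/(H_1\cap H_2)$, which is a quotient of $H_2$ and hence has property~(T) by the quotient-stability listed above. Since $H_1$ also has property~(T), the extension stability yields property~(T) for $H_1 H_2 = \langle H_1, H_2\rangle$.

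For the inductive step, suppose the result holds for $s-1$ normal subgroups. Set $K = \langle H_1,\ldots, H_{s-1}\rangle$; this is normal in $G$ (a product of normal subgroups is normal) and has property~(T) by the inductive hypothesis. Now $\langle H_1,\ldots,H_s\rangle = \langle K, H_s\rangle$, and both $K$ and $H_s$ are normal in $G$, so the base case applies and gives property~(T) for $\langle K, H_s\rangle = \langle H_1,\ldots,H_s\rangle$.

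I do not expect any real obstacle here: the whole argument is bookkeeping on top of the three stability properties of~(T) recalled immediately before the lemma. The only mild point to be careful about is that the product $H_1H_2$ really is a subgroup (which is automatic from normality of $H_1$) and that it coincides with the subgroup generated by $H_1$ and $H_2$, so that one can apply the extension argument to $\langle H_1,H_2\rangle$ directly.
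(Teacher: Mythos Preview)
Your argument is correct and follows essentially the same route as the paper: induction on $s$, using that the relevant quotient $\langle H_1,\ldots,H_s\rangle/H_1$ is a quotient of a group already known to have property~(T), together with extension stability. The paper's version is marginally terser (it does not isolate the $s=2$ case but argues the inductive step directly), but the content is identical.
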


\begin{proof}
	We use induction on $s$, with the case $s=1$ trivial.
	Since $\langle H_1, \ldots , H_s\rangle /H_1$ is a quotient of $\langle H_2 , \ldots, H_s\rangle $, it has property~(T) by induction and the fact that property~(T) passes to quotients.
	Stability of property (T) under short-exact sequences then implies $\langle H_1, \ldots , H_s\rangle$ has property~(T).
\end{proof}

The method to prove Theorem~\ref{thms:T} is then as follows.
We will decompose a finite index subgroup of $\IA_\Gamma$ into a direct product of subgroups $A_1,\ldots,A_s$, each generated by a subset of partial conjugations.
As $\Gamma$ has no SIL, each $A_i$ is abelian, and furthermore we will see that it is invariant under the action of $Q = \Sout^0(A_\Gamma) / \IA_\Gamma$.
In particular $Q\ltimes A_i$ is normal in $Q\ltimes A$, where $A = \langle A_1 , \ldots , A_s \rangle$ has finite index in $\IA_\Gamma$.
We will show that for each $i$ the group $Q\ltimes A_i$ has property~(T), allowing us to apply 
Lemma~\ref{lem:putting the pieces together}.
Then $\Out(A_\Gamma)$ inherits property~(T) from its finite index subgroup $Q\ltimes A$.

We now describe the decomposition of $\IA_\Gamma$ (up to a finite index subgroup).
Let $X$ be an equivalence class in $\Gamma$ and $C$ an $X$--component.
Recall the set $P^X_C$ is defined as
$$P^X_C = \{\pi^y_{C'} \mid y \geq X, C' = C \setminus \st(y)\}.$$
The set $P^X_C$ is a basis for a free abelian group,
which we denote $A^X_C$.

It is clear that the union of all subsets $P^X_C$ as $X$ and $C$ vary over all equivalence classes and corresponding components will contain all partial conjugations and therefore generate $\IA_\Gamma$ by Proposition~\ref{prop:Torelli}.
However, we restrict ourselves to consider only those sets $P^X_C$ for which $(X,C)$ is principal.
By Lemma~\ref{lem:virtual torelli} these will generate a finite-index subgroup of $\IA_\Gamma$.

\begin{lem}
	Let $X$ be an equivalence class in $\Gamma$ and $C$ an $X$--component.
	
	The subgroup $A_C^X =  \langle P^X_C \rangle$ is normal in $\Sout^0(A_\Gamma)$.
\end{lem}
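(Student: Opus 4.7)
The plan is to exploit the Laurence--Servatius generating set of $\Sout^0(A_\Gamma)$ described in Section~\ref{sec:prelim:gen}: it suffices to show that for every partial conjugation or transvection $g$ in the generating set, and every $\pi^\mu_S \in P^X_C$ (where $\mu \geq X$ and $S = C \setminus \st(\mu)$), the conjugate $g \pi^\mu_S g^{-1}$ lies in $A^X_C$. The partial conjugation case is immediate: by Proposition~\ref{prop:Torelli}, the no-SIL hypothesis makes $\IA_\Gamma$ abelian, and since every partial conjugation and every $\pi^\mu_S$ lies in $\IA_\Gamma$, such a $g$ commutes with $\pi^\mu_S$ in $\Out(A_\Gamma)$, so the conjugate is $\pi^\mu_S$ itself.

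For a transvection $R_a^b$ (with $a \leq b$), I apply Lemma~\ref{lem:conjugates of pc by transv} with $v = \mu$, $x = a$, $y = b$, and support $S$. In the no-SIL setting, part~(iii) of that lemma never applies. If $a \neq \mu$, part~(i) applies (the SIL hypothesis in (i) is vacuous under no SIL), so $R_a^b$ commutes with $\pi^\mu_S$ and the conjugate lies trivially in $A^X_C$. If $a = \mu$, part~(ii) yields
\[
(R_\mu^b)^{\delta} \pi^\mu_S (R_\mu^b)^{-\delta} = \pi^\mu_S\,(\pi^b_{S'})^\delta, \qquad S' = S \setminus \st(b) = C \setminus (\st(\mu)\cup \st(b)).
\]
By transitivity, $b \geq \mu \geq X$, so it remains to identify $\pi^b_{S'}$ as the canonical element $\pi^b_{C\setminus \st(b)} \in P^X_C$, i.e.\ to show $S' = C\setminus \st(b)$.

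If $\mu$ and $b$ are adjacent, Remark~\ref{rem:domination} gives $\st(\mu) \subseteq \st(b)$ and the identity is immediate. If $\mu \not\sim b$, then Remark~\ref{rem:domination} gives only $\lk(\mu) \subseteq \lk(b)$, so $\st(\mu) \cup \st(b) = \{\mu\} \cup \st(b)$, and the identity $S' = C \setminus \st(b)$ holds precisely when $\mu \notin C$. Verifying $\mu \notin C$ is the main obstacle, and it is where the no-SIL hypothesis is used essentially. Suppose for contradiction $\mu \in C$; choose $x \in X$ so that $C$ is an $x$-component, whence $\mu \notin \st(x)$. Since $\mu \geq x$ and $\mu \notin \lk(x)$, Remark~\ref{rem:domination} gives $\lk(x) \subseteq \lk(\mu)$, and combined with $\lk(\mu) \subseteq \lk(b)$ we obtain $\lk(x) \subseteq \lk(\mu) \cap \lk(b)$. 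Moreover, if $x \sim b$ then $b \in \lk(x) \subseteq \lk(\mu)$, contradicting $\mu \not\sim b$; thus $\{x, \mu, b\}$ are pairwise non-adjacent. Since $\lk(x) \subseteq \lk(\mu) \cap \lk(b)$, the vertex $x$ is isolated in $\Gamma \setminus (\lk(\mu)\cap\lk(b))$, so $(\mu, b \mid x)$ is a SIL, contradicting our standing hypothesis. Hence $\mu \notin C$, $\pi^b_{S'} \in P^X_C$, and $A^X_C$ is normal in $\Sout^0(A_\Gamma)$.
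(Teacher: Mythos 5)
Your proof follows the same route as the paper's, which is simply the one-line statement ``This follows from the relations in Lemma~\ref{lem:conjugates of pc by transv} under the assumption that there is no SIL''; you are filling in the details, and the structure of your argument (treat partial conjugation generators via commutativity of $\IA_\Gamma$, treat transvection generators via the three relations of Lemma~\ref{lem:conjugates of pc by transv}) is exactly right.

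There is one small gap in the step ``hence $\mu\notin C$.'' Your SIL triple $(\mu,b\mid x)$ needs its three vertices to be \emph{distinct}, and you check $x\ne\mu$ and $\mu\ne b$ but not $x\ne b$. The case $x=b$ can actually occur: if $b\in X$, then $b\ge\mu\ge X\ni b$ forces $\mu$ to be equivalent to $b$, so $\mu\in X$, and $X$ is then a free class; if also $\mu\in C$, one checks (since $\lk(\mu)\subset\st(x)$ for $x\in X\setminus\{\mu\}$) that $C=\{\mu\}$, so in fact $S=C\setminus\st(\mu)=\emptyset$. In that situation your claimed identity $S'=C\setminus\st(b)$ is genuinely false ($S'=\emptyset$ while $C\setminus\st(b)=\{\mu\}$), although normality still holds because $\pi^\mu_S$ is trivial. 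The clean fix is to dispose of the case $S=\emptyset$ at the outset (there is nothing to prove), after which $\mu\in C$ would force $\mu\notin X$ (else $S=\emptyset$ as above), so $b\notin X$, giving $x\ne b$; your SIL argument then applies without modification.
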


\begin{proof}
	This follows from the relations in Lemma~\ref{lem:conjugates of pc by transv} under the assumption that there is no SIL.
\end{proof}

We aim to show that $Q \ltimes A^X_C$ has property (T) when $(X,C)$ is principal. 
This  is dome by showing that $Q\ltimes A^X_C$ is itself a block triangular matrix group, and in particular one of those covered by criteria set out in \cite[Section 4]{Aramayona-MartinezPerez} that determine when such groups have property (T).
We now introduce the relevant notation (which differs  slightly from that of \cite{Aramayona-MartinezPerez}).

Fix integers $m_1,m_2$ so that $m_1 < m_2$.
It may help when first reading this to assume $m_1 = 1$; 
in practice we will have either $m_1=1$ or $m_1=0$.
Let $V_1,\ldots,V_r$ be a partition of $I=[m_1,m_2]\cap\Z$ so that for each $x\in V_i$ and each $y\in V_j$, if $i< j$ then $x< y$.
Let $\Lambda$ be a directed graph with $r$ vertices labelled by $V_1,\ldots , V_r$.
Assume that there is an edge from the vertex labelled $V_i$ to the vertex labelled $V_j$ only if $i\leq j$, and that the edge relation is transitive: if there is an edge from $V_i$ to $V_j$, and another from $V_j$ to $V_k$, then there is an edge from $V_i$ to $V_k$.
Let $n_i$ be the size of $V_i$ and $n = m_2-m_1+1$.

We let $E_a^b$, for $a,b\in I$, denote the $n\times n$ elementary matrix $E_{ij}$ where $i=b+1-m_1$ and $j = a+1-m_1$. (Thus if $m_1=1$ then we have $E_a^b = E_{ba}$).
Define the group $\cal{H}_\Lambda$ to be the block triangular matrix generated by
	$$\{ E_a^b  \mid a\in V_i, b\in V_j \textrm{ and there is an edge from $V_i$ to $V_j$}\}.$$
The group $\cal{H}_\Lambda$ is a block lower-triangular matrix, with $i$--th diagonal block corresponding to $\SL(n_i,\Z)$,
and the $(i,j)$--th block, for $i\ne j$, being non-trivial if and only if there is an edge from $V_j$ to $V_i$.

We can identify $Q = \Out(A_\Gamma) / \IA_\Gamma$ with a group $\cal{H}_\Lambda$ as follows. 
Take $m_1=1$ and $m_2$ to be the number of vertices in $\Gamma$.
Order the vertices of $\Gamma$ as per Convention~\ref{convention vertex labelling}.
Let $V_1,\ldots , V_r$ be the equivalence classes so that if $i<j$, then if $v_a \in V_i$ and $v_b \in V_j$ then $a < b$.
To construct $\Lambda$, take the directed graph with $r$ vertices labelled by $V_i$ and add an edge from $V_i$ to $V_j$ whenever $V_i \leq V_j$.
Note that this includes edges from each $V_i$ to itself.

In the following we explain how to realise $Q\ltimes A^X_C$ as a matrix group of this form, starting with $Q \cong \cal{H}_\Lambda$.
You may think of the rows/columns of a matrix in $\cal{H}_\Lambda$ as corresponding to vertices of $\Gamma$.
To obtain the corresponding matrix representation of $Q\ltimes A^X_C$ we add a new row/column above/in front of the existing entries.
The new row/column can be thought of, roughly, as representing $C$.

\begin{lem}\label{lem:realise it as a matrix group}
	Let $X=V_i$ be an equivalence class in $\Gamma$ and let $C$ be an $X$--component.
	Construct $\hat\Lambda$ from $\Lambda$ by adding a vertex labelled by $V_{0} = \{0\}$, and adding edges from $V_0$ to itself, and to any $V_j$ where an edge from $V_i$ terminates.
	
	Then $Q\ltimes A^X_C \cong \cal{H}_{\hat\Lambda}$. 
\end{lem}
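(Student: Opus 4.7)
The plan is to write $\cal{H}_{\hat\Lambda}$ itself as a semidirect product $\cal{H}_\Lambda \ltimes N$ and identify it with $Q \ltimes A^X_C$ via natural correspondences of generators. Since by construction $V_0$ has no incoming edges from $V_j$ with $j \geq 1$ in $\hat\Lambda$, every element of $\cal{H}_{\hat\Lambda}$ factors uniquely as $MT$, where $M$ lies in the image of the embedding $\cal{H}_\Lambda \hookrightarrow \cal{H}_{\hat\Lambda}$ that adjoins a new $(0,0)$ entry equal to $1$ with zeros elsewhere in the $0$-th row and column, and $T$ lies in the abelian subgroup $N$ consisting of matrices of the form $I + \sum_b \alpha_b e_{b,0}$ with $\alpha_b \in \Z$, indexed by vertices in equivalence classes $V_j$ satisfying $V_j \geq V_i$. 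This subgroup $N$ is free abelian with basis $\{E_0^b\}$, and is normalised by $\cal{H}_\Lambda$ acting by left matrix multiplication on the $0$-th column.

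Next I would define $\phi\colon Q \ltimes A^X_C \to \cal{H}_{\hat\Lambda}$ by using the standard representation isomorphism $Q \cong \cal{H}_\Lambda$ from Proposition~\ref{propspecial} (post-composed with the above embedding) on the $Q$-factor, and by sending each generator $\pi^y_{C \setminus \st(y)} \in P^X_C$ to $E_0^b$, where $b$ is the index of $y$. The set $P^X_C$ is parametrised by vertices $y$ in equivalence classes $V_j$ with $V_j \geq V_i$, giving a bijection with the chosen basis of $N$; since $A^X_C$ is free abelian, being a subgroup of the free abelian group $\IA_\Gamma$ (Proposition~\ref{prop:Torelli}), this restricts to a group isomorphism $A^X_C \to N$.

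The crucial step is verifying that $\phi$ respects the semidirect product structures, which reduces to checking compatibility on pairs consisting of a transvection generator $R_u^v$ of $Q$ (with $u=v_a$, $v=v_c$) and a partial conjugation generator $\pi^y_{C \setminus \st(y)}$ of $A^X_C$ (with $y=v_b$). Using Lemma~\ref{lem:conjugates of pc by transv}, and noting that the no-SIL hypothesis excludes case~(ii): the commuting case~(i) corresponds to $[E_a^c, E_0^b] = 1$, which holds exactly when $u \neq y$; and case~(iii), where $u = y$, gives $R_u^v \pi^u_{C \setminus \st(u)} (R_u^v)^{-1} = \pi^u_{C \setminus \st(u)} \pi^v_{(C \setminus \st(u)) \setminus \st(v)}$. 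A priori the support $(C \setminus \st(u)) \setminus \st(v)$ differs from $C \setminus \st(v)$ by at most the singleton $\{u\}$, but under the standing hypotheses (no SIL, \ref{A}, and \ref{B'}, which together with principality of $(X,C)$ force $|X| \geq 2$ and all equivalence classes to be abelian) this discrepancy is ruled out, so the conjugate equals $\pi^u_{C \setminus \st(u)} \pi^v_{C \setminus \st(v)} \in A^X_C$, matching the matrix identity $E_a^c E_0^a (E_a^c)^{-1} = E_0^a E_0^c$.

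Surjectivity of $\phi$ then follows because its image contains the embedded copy of $\cal{H}_\Lambda$ together with all basis elements of $N$, and injectivity follows from the semidirect product descriptions on both sides combined with the bijectivity of $\phi$ on each factor. I expect the main obstacle to be the case~(iii) support verification: one must confirm that under the hypotheses of Theorem~\ref{thms:T} and principality of $(X,C)$, the vertex $u$ cannot lie in the component $C$ in the relevant configuration, as otherwise the stray partial conjugation $\pi^v_{\{u\}}$ would escape $A^X_C$ and break the correspondence.
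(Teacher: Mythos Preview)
Your approach is essentially the same as the paper's: decompose $\cal{H}_{\hat\Lambda}$ as $\cal{H}_\Lambda \ltimes N$, identify $N$ with $A^X_C$ via $E_0^b \leftrightarrow \pi^{v_b}_{C\setminus\st(v_b)}$, and cite Lemma~\ref{lem:conjugates of pc by transv} for the compatibility of actions. The paper's proof is much terser---it simply asserts that by Lemma~\ref{lem:conjugates of pc by transv} the actions agree---while you carry out the case analysis explicitly.

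Your treatment of case~\ref{relation2} is in fact more careful than the paper's. You correctly observe that $(C\setminus\st(u))\setminus\st(v)$ can differ from $C\setminus\st(v)$ by the singleton $\{u\}$, and you import the hypotheses of Theorem~\ref{thms:T} together with principality of $(X,C)$ to rule this out: these force $X$ to be abelian with $|X|\geq 3$, whence any $u\geq X$ is adjacent to every vertex of $X$ and so cannot lie in $C$. The paper glosses over this point entirely. Strictly speaking, the lemma as stated carries only the ambient no-SIL hypothesis of the section, so you are proving a slightly restricted version---but it is precisely the version invoked in the proof of Theorem~\ref{thms:T}, so nothing is lost for the application.

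One small gap remains in your argument for the isomorphism $A^X_C\to N$: knowing $A^X_C$ is free abelian does not by itself make the bijection of indexing sets into a group isomorphism; you also need that the elements of $P^X_C$ are nontrivial and independent in $\IA_\Gamma$. Independence is easy (distinct multipliers), but nontriviality of $\pi^{v_b}_{C\setminus\st(v_b)}$ requires $C\not\subseteq\st(v_b)$. The paper asserts this rank equality without comment as well, so you are in good company.
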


\begin{proof}
	The quotient map $\cal{H}_{\hat\Lambda} \to \cal{H}_\Lambda \cong Q$ that kills the first coordinate, corresponding to the integer $0$, has kernel $K$ isomorphic to $A^X_C$, seen as follows. 
	The kernel is generated by the matrices $E_0^b$ for any $b\in V_j$ where there is an edge from $V_i$ to $V_j$ in $\Lambda$, or equivalently so that $V_i \leq V_j$. 
	The isomorphism $K\cong A^X_C$ comes from identifying $E_0^b$ with $\pi^{v_b}_{C'}$ for each $\pi^{v_b}_{C'} \in P^X_C$. 
	Both groups $K$ and $A^X_C$ are free abelian of the same rank, namely $|P^X_C|$, and the above describes an identification of bases.
	
	By Lemma~\ref{lem:conjugates of pc by transv}, the action of $Q$ on $A^X_C$ agrees with the action of $\cal{H}_\Lambda$ on $K$, giving $Q\ltimes A^X_C \cong \cal{H}_{\hat\Lambda}$ as required.
\end{proof}

We are now ready to apply the result of \cite{Aramayona-MartinezPerez} that gives sufficient conditions for groups $\cal{H}_\Lambda$ to have property (T) in order to complete the proof of Theorem~\ref{thms:T}.
These conditions are as follows. Recall that $n_i$ is the size of $V_i$.

\begin{prop}[{\cite[Proposition~4.2]{Aramayona-MartinezPerez}}]\label{prop:block triangular and T}
	Let $\Lambda$ be constructed as above.
	Suppose the following conditions hold:
	\begin{enumerate}
		\renewcommand{\theenumi}{\normalfont ($\cal{H}\arabic{enumi}$)}
		\renewcommand\labelenumi{\theenumi}
		\item \label{am1} for each $i$ there is an edge from $V_i$ to itself,
		\item \label{am2} $n_i \ne 2$ for each $i$,
		\item \label{am3} whenever $n_i = n_j = 1$ and there is an edge from $V_i$ to $V_j$, with $i\ne j$, there is a third vertex $V_k$ and edges from $V_i$ to $V_k$ and from $V_k$ to $V_j$.
	\end{enumerate}
	Then $\cal{H}_\Lambda$ has property (T). 
\end{prop}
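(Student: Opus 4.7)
The plan is to decompose $\cal{H}_\Lambda$ as a union of normal subgroups with property (T) and invoke Lemma~\ref{lem:putting the pieces together}. The classical inputs I would draw on are: Kazhdan's theorem that $\SL(n,\Z)$ has (T) for $n\geq 3$; the relative property (T) for the pair $(\SL(n,\Z)\ltimes \Z^n,\Z^n)$ for $n\geq 2$; and stability of (T) under extensions and quotients.

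First I would treat the diagonal subgroups. By \ref{am1}, each $\SL(n_i,\Z)$ sits inside $\cal{H}_\Lambda$; by \ref{am2}, it has (T) (trivially if $n_i=1$, by Kazhdan if $n_i\geq 3$). Next, for each edge $V_i\to V_j$ with $i\neq j$ I would consider the root subgroup $U_{ij}$ of matrices differing from the identity only in the $(V_j,V_i)$-block. Transitivity of the edge relation together with the Steinberg relation $[E_a^c,E_c^b]=E_a^b$ shows that conjugation by any other generator of $\cal{H}_\Lambda$ sends $U_{ij}$ into itself, so $U_{ij}$ is normal in $\cal{H}_\Lambda$. When $\max(n_i,n_j)\geq 3$---say $n_j\geq 3$---the subgroup $H_{ij}=\langle \SL(n_j,\Z),U_{ij}\rangle$ is isomorphic to $\SL(n_j,\Z)\ltimes (\Z^{n_j})^{n_i}$, and it has property (T) because relative (T) applied to each of the $n_i$ copies of the standard representation gives relative (T) for $(H_{ij},U_{ij})$, which combines with Kazhdan's (T) for the quotient $\SL(n_j,\Z)$.

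The hard case, which I expect to be the main obstacle, is $n_i=n_j=1$: here $U_{ij}\cong \Z$ admits no relative-(T) partner by itself. I would use hypothesis \ref{am3} to produce an intermediate vertex $V_k$ with edges $V_i\to V_k$ and $V_k\to V_j$; the Steinberg commutator identity $[E_a^c,E_c^b]=E_a^b$ (with $a\in V_i$, $c\in V_k$, $b\in V_j$) then places the generator of $U_{ij}$ inside the commutator subgroup of $\langle U_{ik},U_{kj}\rangle$. If $n_k\geq 3$, the group $\langle \SL(n_k,\Z),U_{ik},U_{kj},U_{ij}\rangle$ is a parabolic-type subgroup of a copy of $\SL(n_k+2,\Z)$ whose unipotent radical is two-step nilpotent, with $\SL(n_k,\Z)$ acting on its abelianization $U_{ik}\oplus U_{kj}$ as $\Z^{n_k}$ together with its dual; property (T) for this subgroup would follow from a sequence of two relative-(T) extensions, first past the abelianization and then past the central $U_{ij}$. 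If instead $n_k=1$, I would iterate \ref{am3} on the shorter edges $V_i\to V_k$ and $V_k\to V_j$; finiteness of the poset $\Lambda$ forces this iteration to terminate at cover edges, and any cover edge with both endpoints of size $1$ would violate \ref{am3}---so every terminal bad edge is in fact a good edge already handled by the previous paragraph.

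With these normal subgroups in hand, every generator of $\cal{H}_\Lambda$---a diagonal block element or a root generator---lies inside one of the (T) normal subgroups constructed above, and Lemma~\ref{lem:putting the pieces together} yields property (T) for $\cal{H}_\Lambda$. The technical point I expect to spend the most time on is the relative-(T) propagation in the bad-edge case, specifically verifying that the central $U_{ij}$ inside the two-step nilpotent unipotent radical inherits relative (T) from the ambient higher-rank arithmetic structure; this would require either an explicit appeal to known results on property (T) for parabolic subgroups of arithmetic lattices, or a direct bounded-generation argument exploiting the commutator identity across many choices of $a$, $b$, and $c$.
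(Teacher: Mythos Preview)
The paper does not contain a proof of this proposition: it is quoted verbatim from \cite[Proposition~4.2]{Aramayona-MartinezPerez} and used as a black box in the proof of Theorem~\ref{thms:T}. So there is no ``paper's own proof'' to compare your attempt against.

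That said, your outline has a genuine gap. The claim that each root subgroup $U_{ij}$ is normal in $\cal{H}_\Lambda$ is false in general, and the Steinberg relation you invoke is precisely what shows it fails. If there is an edge $V_k\to V_i$ (so $U_{ki}\subset\cal{H}_\Lambda$), then for $c\in V_k$, $a\in V_i$, $b\in V_j$ one has
\[
E_c^a\,E_a^b\,(E_c^a)^{-1}=E_a^b\,E_c^b,
\]
which lies in $U_{ij}\cdot U_{kj}$, not in $U_{ij}$. The same phenomenon occurs on the other side whenever there is an edge out of $V_j$. Consequently neither $U_{ij}$ nor your $H_{ij}=\langle \SL(n_j,\Z),U_{ij}\rangle$, nor the parabolic-type group you build in the $n_i=n_j=1$ case, is normal in $\cal{H}_\Lambda$, and Lemma~\ref{lem:putting the pieces together} does not apply to them.

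Your ingredients (Kazhdan's theorem for $\SL(n,\Z)$ with $n\geq 3$, relative~(T) for $\SL(n,\Z)\ltimes\Z^n$, and the use of \ref{am3} to factor a ``bad'' edge through a block of size $\geq 3$) are the right ones, but the decomposition has to respect normality. A workable scheme is to filter $\cal{H}_\Lambda$ by genuinely normal subgroups---for instance via the quotient $\cal{H}_\Lambda\twoheadrightarrow\prod_i\SL(n_i,\Z)$ with unipotent kernel $U$, and then a central series of $U$ whose successive abelian quotients are modules for the diagonal product---and to establish relative~(T) step by step along that filtration. Condition~\ref{am3} is then used to guarantee that, at each stage, the relevant abelian quotient is acted on nontrivially by some $\SL(n_k,\Z)$ with $n_k\geq 3$ (or sits inside a larger block where relative~(T) is available), rather than to build isolated normal subgroups around individual edges.
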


\begin{proof}[Proof of Theorem~\ref{thms:T}]
	Assume that $\Gamma$ has no SIL and that conditions~\ref{A} and \ref{B'} hold.
	Let $V_1 , \ldots , V_r$ be the equivalence classes of $\Gamma$.
	Note that property~\ref{A} implies each equivalence class of $\Gamma$ has size not equal to 2.

	Construct the graph $\Lambda$ from $\Gamma$ as above. 
	Condition~\ref{am1} holds in $\Lambda$ by construction, whil
	condition~\ref{A} implies that \ref{am2} and \ref{am3} also hold.
	
	Let $X=V_i$ and $C$ be an $X$--component, chosen so that $(X,C)$ is principal.
	Now construct $\hat\Lambda$ as described in Lemma~\ref{lem:realise it as a matrix group}.
	By construction, $\hat{\Lambda}$ inherits both \ref{am1} and \ref{am2} from $\Lambda$.
	For \ref{am3}, 
	if there is an edge from $V_0$ to $V_j$, for $j\ne i$, in $\hat\Lambda$, and $n_j= 1$, then there are also edges from $V_0$ to $V_i$ and from $V_i$ to $V_j$.
	This is sufficient since condition~\ref{B'} prevents us from having $n_i=1$.
	Proposition~\ref{prop:block triangular and T} therefore implies that $\cal{H}_{\hat\Lambda}$, and hence $Q \ltimes A^{X}_C$ by Lemma~\ref{lem:realise it as a matrix group}, has property~(T).
	
	To complete the proof, we apply Lemmas~\ref{lem:putting the pieces together} and~\ref{lem:virtual torelli}.
	Denote by $A$ the subgroup of $\IA_\Gamma$ generated by the sets $A^X_C$ when $(X,C)$ is principal.
	By Lemma~\ref{lem:putting the pieces together} we get that $Q\ltimes A$ has property~(T).
	Since $A$ has finite index in $\IA_\Gamma$ by Lemma~\ref{lem:virtual torelli}, so does $Q\ltimes A$ in $\Out(A_\Gamma)$, and the result follows.
\end{proof}

\bibliographystyle{alpha}
\bibliography{bibliography_out_raag_noSIL-T}

\medskip
\noindent {Andrew Sale}, % \rule{0mm}{6mm} \\
%Department of Mathematics,
University of Hawaii at Manoa, \\
%2565 McCarthy Mall (Keller Hall 401A),
%Honolulu, HI 96822, USA \\ 
{andrew@math.hawaii.edu}, \ \href{https://math.hawaii.edu/~andrew/}{https://math.hawaii.edu/$\sim$andrew/}
\medskip

\end{document}